\newtheorem{theorem}{Theorem}[section]
\newtheorem{lemma}[theorem]{Lemma}
\theoremstyle{definition}
\newtheorem{definition}[theorem]{Definition}
\newtheorem{example}[theorem]{Example}
\newtheorem{corollary}[theorem]{Corollary}
\newtheorem{proposition}[theorem]{Proposition}
\theoremstyle{remark}
\newtheorem{remark}[theorem]{Remark}
\numberwithin{equation}{section}
\begin{document}

\title[Global hypoellipticity for pseudo-differential operators]{Global hypoellipticity for a class of \\ pseudo-differential operators on the torus}

\author[F.  de \'{A}vila ]{Fernando de \'{A}vila Silva}
\address{Departamento de Matem\'{a}tica \\ Universidade Federal do Paran\'{a} \\ Caixa Postal 19081 \\ 81531-990, Curitiba, PR, Brazil}
\email{fernando.avila@ufpr.br}

\author[R. Gonzalez]{Rafael Borro Gonzalez}
\address{Departamento de Matem\'{a}tica \\ Universidade Federal do Paran\'{a} \\ Caixa Postal 19081 \\ 81531-990, Curitiba, PR, Brazil}
\email{faelborro@gmail.com}
\thanks{During this work the second author was supported by PNPD/CAPES - Brazil in the Graduate Program in Mathematics, PPGM-UFPR}

\author[A. Kirilov]{\\ Alexandre Kirilov}
\address{Departamento de Matem\'{a}tica \\ Universidade Federal do Paran\'{a} \\ Caixa Postal 19081 \\ 81531-990, Curitiba, PR, Brazil}
\email{akirilov@ufpr.br}

\author[C. de Medeira]{Cleber de Medeira}
\address{Departamento de Matem\'{a}tica \\ Universidade Federal do Paran\'{a} \\ Caixa Postal 19081 \\ 81531-990, Curitiba, PR, Brazil}
\email{clebermedeira@ufpr.br}

\subjclass[2010]{Primary 35B10, 35H10, 35S05}

\date{\today}

\dedicatory{In memory of Todor V. Gramchev}

\begin{abstract}
We show that an obstruction of number-theoretical nature appears as a necessary condition for the global hypoellipticity of the pseudo-differential operator $L=D_t+(a+ib)(t)P(D_x)$ on $\mathbb{T}^1_t\times\mathbb{T}_x^{N}$. This condition is also sufficient when the symbol $p(\xi)$ of $P(D_x)$ has at most logarithmic growth. If $p(\xi)$ has super-logarithmic growth, we show that the global hypoellipticity of $L$ depends on the change of sign of certain interactions of the coefficients with the symbol $p(\xi).$ Moreover, the interplay between the order of vanishing of coefficients with the order of growth of $p(\xi)$ plays a crucial role in the global hypoellipticity of $L$. We also describe completely the global hypoellipticity of $L$ in the case where $P(D_x)$ is homogeneous. Additionally, we explore the influence of irrational approximations of a real number in the global hypoellipticity.
\end{abstract}

\maketitle

\begin{center}
 \begin{minipage}{1.0\textwidth}
   \tableofcontents
 \end{minipage}
\end{center}

\section{Introduction}

We investigate the global hypoellipticity of pseudo-differential operators of the form
\begin{equation}\label{MO}
L=D_t+(a+ib)(t)P(D_x),\quad (t,x)\in\mathbb{T}^1\times\mathbb{T}^N,
\end{equation}
where $a(t)$ and $b(t)$ are real smooth functions on $\mathbb{T}^1,$ and $P(D_x)$ is a pseudo-differential operator of order $m \in \mathbb{R}$ defined on $\mathbb{T}^N\simeq\mathbb{R}^N/(2\pi\mathbb{Z}^N)$. The operator $P(D_x)$ is given by
\begin{equation}\label{pdo}
P(D_x) \cdot u = \sum_{\xi \in \mathbb{Z}^N}{e^{i x \cdot \xi} p(\xi) \widehat{u}(\xi)},
\end{equation}
where $p=p(\xi) \in {S}^m(\mathbb{Z}^N)$ is the toroidal symbol of $P(D_x)$ and
\begin{equation*}
\widehat{u}(\xi) = \dfrac{1}{(2\pi)^N} \int_{\mathbb{T}^N}{e^{- i x \cdot \xi} u(x) dx}, \ \xi \in \mathbb{Z}^N,
\end{equation*}
are the Fourier coefficients of $u$.

The operator  $L$ is said to be \textit{globally hypoelliptic} on $\mathbb{T}^1\times\mathbb{T}^N $ if the conditions $u\in\mathcal{D}'(\mathbb{T}^1\times\mathbb{T}^N)$ and $Lu\in C^{\infty}(\mathbb{T}^1\times\mathbb{T}^N)$ imply that $u\in C^{\infty}(\mathbb{T}^1\times\mathbb{T}^N).$

Even in the case of  vector fields, the investigation of global hypoellipticity on the torus is a challenging problem that still have open questions. Perhaps the question without an answer that is most famous and seemingly far from  a solution is the Greenfield and Wallach conjecture. It states that: if a smoothly closed manifold $M$ admits a globally hypoelliptic vector field $X$, then $M$ is diffeomorphic to a torus and $X$ is smooth conjugated to a Diophantine vector field (see \cite{GW3}).

This conjecture has a geometric version stated in terms of cohomology-free dynamical systems, known as Katok conjecture, and was proved only in some few cases and in dimensions 2 and 3. For more details we refer the works of G. Forni \cite{Fo08}, J. Hounie \cite{HOU82}, and A. Kocsard \cite{Koc09}.

With respect to the differential case of the operator we are interested, with $P(D_x)=D_x$ and $N=1,$ J. Hounie has proved in Theorem 2.2 of \cite{HOU79} that $L=D_t+(a+ib)(t)D_x$ is globally hypoelliptic on $\mathbb{T}^2$ if and only if $b(t)$ does not change sign and either $b_0 \neq 0$ or $a_0$ is an irrational non-Liouville number, where
\begin{equation*}
a_0\doteq(2\pi)^{-1}\int_{0}^{2\pi}a(t)dt \ \textrm { and } \ b_0\doteq(2\pi)^{-1}\int_{0}^{2\pi}b(t)dt.
\end{equation*}

We recall that S. Greenfield and N. Wallach have proved in \cite{GW1} that the above conditions on $a_0$ and $b_0$ means that the constant coefficient operator $D_t+(a_0+ib_0)D_x$ is globally hypoelliptic. Therefore, the global hypoellipticity of $D_t+(a_0+ib_0)D_x$ is a necessary condition for the global hypoellipticity of the operator with variable coefficients $D_t+(a+ib)(t)D_x$.

We prove that this necessity remains valid for any pseudo-differential operator $P(D_x)$ defined on the $N$-dimensional torus, that is, if the operator $L$ defined in \eqref{MO} is globally hypoelliptic then the constant coefficient operator
\begin{equation}\label{MOCC}
L_0=D_t+(a_0+ib_0)P(D_x),\quad (t,x)\in\mathbb{T}^1\times\mathbb{T}^N,
\end{equation}
is also globally hypoelliptic (see Theorem \ref{ncm2}).

We also show that the global hypoellipticity of $L_0$ and the control of the sign of the imaginary part of the functions
\[
  t\in\mathbb{T}^1\mapsto \mathcal{M}(t,\xi)\doteq(a+ib)(t)p(\xi), \ \xi \in \mathbb{Z}^N,
\]
for sufficiently large $|\xi|$, are sufficient conditions to the global hypoellipticity of $L$ (see Theorem \ref{gt1}).

Although the global hypoellipticity of $L_0$ cannot be removed in the study of the global hypoellipticity of $L$, the converse of Theorem \ref{gt1} in general does not hold; unlike the differential case $P(D_x)=D_x$. In Sections \ref{sectmlg} and \ref{sectfr} we exhibit examples of globally hypoelliptic operators in which the imaginary part of the functions $t\in\mathbb{T}^1 \mapsto \mathcal{M} (t,\xi)$ changes sign for infinitely many indexes $\xi\in\mathbb{Z}^N$ (see Examples \ref{exampcc}, \ref{exampcc2}, \ref{exampsign1}, \ref{exampabncs}, and the first example in Subsection \ref{exar2}).

We point out that our results are not a consequence of the Hounie's abstract results in \cite{HOU79}, even when our operator fits in the conditions assumed in that work. Depending on $P(D_x)$, the scales of Sobolev spaces used by Hounie are different from the usual Sobolev spaces, which implies in a different notion of global hypoellipticity. We refer the reader to \cite{AGK19}, Section 3.3, for more details.

In Section \ref{sectcc} we study operators with constant coefficients giving a special attention to the case when $P(D_x)$ is a  homogeneous operator of rational degree $m$ on $\mathbb{T}^1$, see Theorem \ref{gt0}. In this case, our main contribution is to shed light on the connections between hypoellipticity and certain approximations of real numbers, which are not considered in \cite{GW1}. Indeed, in our approach the global hypoellipticity depends on the following approximations
\begin{align*}
\left | \frac{\tau}{|\xi|^{m}} + p\left (\pm 1\right) \right|, \ (\tau,\xi) \in \mathbb{Z}\times\mathbb{Z}_{\ast}
\end{align*}
where the numbers $\tau/{|\xi|^{m}}$ can be irrational, depending on $m.$

As a consequence of Theorem \ref{gt0}, if $m=\ell/q$ is irreducible, with $\ell,q\in\mathbb{N}$, then the operator $D_t+\alpha(D_x^2)^{m/2}$ is globally hypoelliptic if and only if $\alpha^q$ is an irrational non-Liouville number. Notice that the global hypoellipticity of this operator does not depend on $\ell.$ For example,  $D_t+\sqrt{2}(D_x^2)^{\ell/2q}$ is globally hypoelliptic if and only if $q$ is odd.

Regarding the case of variable coefficients, one of the contributions of this work is to show that the global hypoellipticity of the operator $L$ defined in \eqref{MO} is related to the growth of the real and imaginary parts of the symbol $p(\xi)$ when $|\xi| \rightarrow \infty$.

In Section \ref{sectmlg}, we give a complete characterization for the global hypoellipticity of $L$ when either $\alpha(\xi)$ or $\beta(\xi)$ has at most logarithmic growth, where
\[
  p(\xi) = \alpha(\xi)+i\beta(\xi), \ \xi \in \mathbb{Z}^N.
\]

When both $\alpha(\xi)$ and $\beta(\xi)$ have at most logarithmic growth we show that the change of sign of the functions
\begin{equation*}
t \in \mathbb{T}^1 \mapsto \Im\mathcal{M}(t,\xi) = a(t)\beta(\xi) + b(t)\alpha(\xi), \ \xi \in \mathbb{Z}^N,
\end{equation*}
does not play any role in the global hypoellipticity of $L$. More precisely, we prove that $L$, defined in \eqref{MO}, is globally hypoelliptic if and only if $L_0$, defined in \eqref{MOCC} is globally hypoelliptic. This equivalence comes from the reduction to normal form, that is a technique well explored in the works \cite{AGK19,CC00,CG04,DGY97,DGY02,Petr11}.

If $\beta(\xi)$ has at most logarithmic growth, but $\alpha(\xi)$ has super-logarithmic growth, then we prove that $L$ is globally hypoelliptic if and only if $L_0$ is globally hypoelliptic  and $b(t)$ does not change sign. This result remains valid if we exchange $\beta(\xi)$ by $\alpha(\xi)$ and $b(t)$ by $a(t)$, see  Subsection \ref{ssectsuplg}.

When $\alpha(\xi)$ and $\beta(\xi)$  have super-logarithmic growth, the interactions between the functions $a(t)\beta(\xi)$ and $b(t)\alpha(\xi)$ play a larger role. In this case, the operator $L$ may be non-globally hypoelliptic even if $L_0$ is globally hypoelliptic and both $a(t)$ and $b(t)$ do not change sign (see Examples \ref{exampabncs}, \ref{exampnew1} and the second example in Subsection \ref{exar2}).

On the other hand, $L$ may be globally hypoelliptic even if both $a(t)$ and $b(t)$ changes sign provided that $\alpha(\xi)$ and $\beta(\xi)$ go to infinity with the same order of growth (see Example \ref{exampnew2}).

In the case where both the parts $\alpha(\xi)$ and $\beta(\xi)$ have super-logarithmic growth and $\alpha(\xi)/\beta(\xi) \rightarrow K,$ as $|\xi|\rightarrow\infty,$ we show that $L$ is not globally hypoelliptic if the function \[t\in\mathbb{T}^1\rightarrow a(t)+b(t)K\] changes sign (Corollary \ref{cornew}). In particular, if $p(\xi)$ has super-logarithmic growth with $\alpha(\xi)=o(\beta(\xi)),$ then $L$ is not globally hypoelliptic if $a(t)$ changes sign. Analogously, $L$ is not globally hypoelliptic when $\beta(\xi)=o(\alpha(\xi))$ and $b(t)$ changes sign (see Corollary \ref{cornew2}).

Another contribution we give is to present (in Subsection \ref{exar2}) a relation between the global hypoellipticity of the operator $L$ and the order of vanishing of the coefficients $a(t)$ and $b(t)$. We emphasize that this phenomenon is more common in the study of the global solvability of vector fields on the torus (see \cite{BCP,BD,BP,G}).

In Section \ref{sectsc}, we describe completely the global hypoellipticity of $L$ in the case where $P(D_x)$ is  homogeneous (see Theorem \ref{corolph} and Corollary \ref{inttheor}). For these operators the converse of Theorem \ref{gt1} holds. Moreover, we analyze the case of sums of  homogeneous operators extending Theorem 1.3 of \cite{BDG}, see Corollary \ref{lastcor}.

For more results on the problem of global hypoellipticity and global solvability of equations and systems of equations on the torus we refer the reader to the works  \cite{BERG99,BCM,BK07,BKNZ12,BKNZ15,BdMZ12,GPY92,GPY93,HP00} and the references therein.

\section{The constant coefficient operators}\label{sectcc}

By following the approach introduced by Greenfield and Wallach in \cite{GW1}, we may cha\-racterize the global hypoellipticity of the operator
\begin{equation}\label{LCC}
L = D_t + P(D_x), \ (t, x) \in \mathbb{T}^1 \times \mathbb{T}^{N},
\end{equation}
by means of a control in its symbol
$$L(\tau, \xi) = \tau + p(\xi), \ (\tau,\xi)\in \mathbb{Z}\times \mathbb{Z}^N.$$

\begin{theorem}\label{t-2.1}
The operator $L$ in \eqref{LCC}  is globally hypoelliptic if and only if there exist positive constants $C,$ $M$ and $R$ such that
\begin{equation*}
\quad|\tau + p(\xi)|\geqslant  \frac{C}{(|\tau|+|\xi|)^M}, \quad \textrm{for all} \quad |\tau|+ |\xi| \geqslant  R.
\end{equation*}
\end{theorem}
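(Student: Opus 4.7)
The proof will be a standard Greenfield--Wallach style argument using Fourier series expansion on $\mathbb{T}^1\times\mathbb{T}^N$. Recall that a distribution $u\in\mathcal{D}'(\mathbb{T}^1\times\mathbb{T}^N)$ is smooth if and only if its Fourier coefficients $\widehat{u}(\tau,\xi)$ decay faster than any polynomial in $|\tau|+|\xi|$, and a double sequence of complex numbers defines a distribution precisely when it grows at most polynomially. The key observation is that the operator $L$ acts diagonally in Fourier: $\widehat{Lu}(\tau,\xi)=(\tau+p(\xi))\widehat{u}(\tau,\xi)$.

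For the sufficiency of the estimate, I would assume that $u\in\mathcal{D}'$ satisfies $Lu=f\in C^{\infty}$. Then $(\tau+p(\xi))\widehat{u}(\tau,\xi)=\widehat{f}(\tau,\xi)$, and since $\widehat{f}$ decays rapidly, the hypothesis yields
\[
|\widehat{u}(\tau,\xi)|\leqslant C^{-1}(|\tau|+|\xi|)^{M}|\widehat{f}(\tau,\xi)|
\]
for all $(\tau,\xi)$ with $|\tau|+|\xi|\geqslant R$. This forces $\widehat{u}$ to decay rapidly on the complement of a finite set, and on that finite set the coefficients are only finitely many complex numbers; hence $u\in C^{\infty}$.

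For the necessity, I would argue by contrapositive: assume the estimate fails for every choice of $C,M,R$. A standard diagonal choice then produces a sequence $(\tau_j,\xi_j)\in\mathbb{Z}\times\mathbb{Z}^N$ with $|\tau_j|+|\xi_j|\to\infty$ and $|\tau_j+p(\xi_j)|<(|\tau_j|+|\xi_j|)^{-j}$. I would define $u$ by setting $\widehat{u}(\tau,\xi)=1$ at each $(\tau_j,\xi_j)$ and $0$ otherwise; these coefficients are bounded, so $u\in\mathcal{D}'$, but they do not decay, so $u\notin C^{\infty}$. On the other hand the Fourier coefficients of $Lu$ are $\tau_j+p(\xi_j)$ at $(\tau_j,\xi_j)$ and $0$ elsewhere, which decay faster than any polynomial, so $Lu\in C^{\infty}$. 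This violates global hypoellipticity, giving the desired contradiction.

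The only subtlety worth guarding against is the possible vanishing of $\tau+p(\xi)$ at some $(\tau,\xi)$: in the sufficiency direction the estimate guarantees that such zeros can only occur inside the bounded region $|\tau|+|\xi|<R$, so they contribute only finitely many frequencies and do not obstruct the conclusion; in the necessity direction one should ensure that the chosen $(\tau_j,\xi_j)$ are distinct and that the passage from a single bad frequency per level to an infinite sequence can always be performed, which is straightforward since the hypothesis provides bad frequencies outside every ball of radius $R$. This bookkeeping is the only step that requires any care; the rest is direct Fourier analysis.
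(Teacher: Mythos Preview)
Your proposal is correct and is precisely the Greenfield--Wallach argument that the paper invokes; the paper itself does not spell out a proof but simply states that it ``follows the same ideas of the differential case made in \cite{GW1}.'' Your write-up in fact supplies the details the paper omits, and no additional ideas are needed.
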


The proof of this result follows the same ideas of the differential case made in \cite{GW1}.

Note that, if the imaginary part of $p(\xi)$ does not approach to zero rapidly, then the estimate in Theorem \ref{t-2.1} is verified. More precisely, if there exists $M\geq0$ such that \[\label{eqq0}\displaystyle\liminf_{|\xi|\rightarrow\infty}|\xi|^M|\Im p(\xi)|>0,\] then the operator $L=D_t+P(D_x)$ is globally hypoelliptic.

This type of condition appears in Theorem 5.3 of \cite{DGY02}, where the authors studied the relation between global hypoellipticity and simultaneous inhomogeneous Siegel conditions.

On the other hand, without this control in the imaginary part, for instance when $\Im p(\xi)\equiv 0$, Diophantine phenomena appear. When the symbol is homogeneous of rational positive degree, we present a new relation between global hypoellipticity and Liouville numbers in  Theorem \ref{gt0}.

We observe that each toroidal symbol $p$ in the class  $S^{m}(\mathbb{Z}^N)$ can be extended to an Euclidean symbol $\widetilde{p} \in S^{m}(\mathbb{R}^N)$ such that $p = \widetilde{p}\, |_{\mathbb{Z}^N}$ (see Theorem  4.5.3 of  \cite{RT3}).

\begin{definition}\label{defphs}
We say that a toroidal symbol $p(\xi)$ is  homogeneous of degree $m$ if it has an Euclidean extension $\widetilde{p}(\xi)$ such that
\begin{equation*}
p(\xi)=|\xi|^m \widetilde{p}(\xi/|\xi|), \ \xi \in \mathbb{Z}^N_*.
\end{equation*}
\end{definition}

In order to not overload our notation, we will use the notation $p(\xi/|\xi|)$ in place of $\widetilde{p}(\xi/|\xi|)$.

When the symbol $p(\xi)$ is  homogeneous of degree $m$, it follows from Theorem \ref{t-2.1} that the operator  $L$ given by \eqref{LCC} is globally hypoelliptic if and only if there exist positive constants $C,$ $M,$ and $R,$ such that
\begin{align}\label{L-hom-1}
\left | \frac{\tau}{|\xi|^{m}} + p\left (\frac{\xi}{|\xi|}\right) \right|\geqslant  \frac{C}{(|\tau|+|\xi|)^M},
\end{align}
for all $(\tau,\xi)\in\mathbb{Z}\times\mathbb{Z}_{\ast}^N$ which satisfy $|\tau| + |\xi| \geqslant  R.$

\subsection{Global hypoellipticity and Liouville numbers}

Let $P(D_x)$  be an operator on $\mathbb{T}^1$ with symbol $p(\xi)$  homogeneous of degree $m$. In this case
$$p(\xi)=|\xi|^mp(\pm 1), \ \mbox{ for all } \xi\in\mathbb{Z}_*.$$

Thus, when $m<0$, by using condition \eqref{L-hom-1} we see that the operator $L$ is globally hypoelliptic if and only if $p(\pm 1)\neq 0$. Similarly, when $m=0$, it follows that  $L$ is globally hypoelliptic if and only if $p(\pm 1)\notin  \mathbb{Z}$.

The case in which $m$ is a positive rational number and $\Im p(\pm1)=0,$  is more interesting. We now move to describe it.

By using the notations
\[
  p(1) = \alpha + i \beta \quad \textrm{and} \quad p(-1) = \widetilde{\alpha} + i \widetilde{\beta},
\]
we have
\begin{equation}\label{eqq}
\left | \frac{\tau}{|\xi|^{m}} + p\left (\frac{\xi}{|\xi|}\right) \right| =
\left \{
\begin{array}{l}
\left| \dfrac{\tau}{\xi^{m}} + (\alpha + i \beta) \right|,  \ \textrm{ if } \xi>0, \vspace{2mm} \\
\left| \dfrac{\tau}{|\xi|^{m}} + (\widetilde{\alpha}  + i \widetilde{\beta}) \right|, \ \textrm{ if } \ \xi<0.
\end{array}
\right.
\end{equation}

In this case, when $\beta = 0$ (respectively $\widetilde{\beta}=0$) we must control the approximations of the real number $\alpha$  (respectively $\widetilde{\alpha}$) by numbers of the type ${\tau}/{|\xi|^{m}},$ for all $(\tau, \xi) \in \mathbb{Z} \times \mathbb{Z}_{\ast}.$

\begin{definition}
An irrational number $\lambda$ is said to be a Liouville number if there exists a sequence $(j_n,k_n)\in\mathbb{Z}\times\mathbb{N},$ such that $k_n \rightarrow \infty$ and
\begin{equation*}
\left|\lambda-\dfrac{j_n}{k_n}\right|<(k_n)^{-n}, \ n\in\mathbb{N}.
\end{equation*}
\end{definition}

Under the previous notation we have the following result:

\begin{theorem}\label{gt0} If $p=p(\xi)$ is a  homogeneous symbol of degree $m=\ell/q$ with  $\ell, q\in\mathbb{N},$ and $\gcd(\ell,q)=1$,  then the operator
   $$L=D_t+P(D_x),\ (t,x)\in\mathbb{T}^2,$$
is globally hypoelliptic if and only if $\alpha^q$ is an irrational non-Liouville number whenever $\beta=0,$ and $\widetilde{\alpha}^q$ is an irrational non-Liouville number whenever $\widetilde{\beta}=0$.
\end{theorem}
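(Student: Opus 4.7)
The plan is to test the Diophantine condition of Theorem~\ref{t-2.1} in the form \eqref{L-hom-1}, using the explicit evaluation \eqref{eqq} of $p(\xi/|\xi|)$ on the two subsequences $\xi>0$ and $\xi<0$. If $\beta\neq 0$ then $|\tau/\xi^{m}+(\alpha+i\beta)|\geqslant |\beta|>0$ gives the required bound for $\xi>0$ with no arithmetical condition; the case $\widetilde\beta\neq 0$ is symmetric for $\xi<0$. Hence it suffices to analyse the regime $\beta=0$ (and separately $\widetilde\beta=0$, identically). For $\xi>0$, set $y=\tau/\xi^{\ell/q}$ and let $\omega=e^{2\pi i/q}$ be a primitive $q$-th root of unity; the identity
\begin{equation*}
y^{q}-(-\alpha)^{q}=\prod_{k=0}^{q-1}\bigl(y+\alpha\omega^{k}\bigr)
\end{equation*}
expresses $|y+\alpha|$ as the quotient of $|y^{q}-(-\alpha)^{q}|=|\alpha^{q}-(-\tau)^{q}/\xi^{\ell}|$, a rational approximation of $\alpha^{q}$ with denominator $\xi^{\ell}$, by $\prod_{k=1}^{q-1}|y+\alpha\omega^{k}|$.

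For sufficiency, assume $\alpha^{q}$ is irrational non-Liouville, so there exist $C_{0}>0$ and $N\in\mathbb{N}$ with $|\alpha^{q}-j/r|\geqslant C_{0}/r^{N}$ for every $(j,r)\in\mathbb{Z}\times\mathbb{N}$. When $|y|\geqslant 2|\alpha|$, $|y+\alpha|\geqslant |y|/2$ is already polynomial in $|\tau|+|\xi|$; when $|y|\leqslant 2|\alpha|$, the bound $\prod_{k=1}^{q-1}|y+\alpha\omega^{k}|\leqslant(3|\alpha|)^{q-1}$ together with the factorization and the non-Liouville estimate yields $|y+\alpha|\geqslant C_{0}/[(3|\alpha|)^{q-1}\xi^{\ell N}]$. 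Thus \eqref{L-hom-1} holds with $M=\ell N$ for $\xi>0$, and the analogous argument with $\widetilde\alpha$ handles $\xi<0$.

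For necessity when $\alpha^{q}=a/b\in\mathbb{Q}$ with $\gcd(a,b)=1$, $b>0$: use $\gcd(\ell,q)=1$ to pick $s,t\in\mathbb{Z}$ with $qs-\ell t=1$. For each prime $p$ the equation $qv_{p}(\sigma)-\ell v_{p}(\xi)=v_{p}(|a|)-v_{p}(b)$ admits non-negative integer solutions; assembling them yields positive integers $\sigma_{0},\xi_{0}$ with $b\sigma_{0}^{q}=|a|\xi_{0}^{\ell}$. Scaling by $(n^{\ell},n^{q})$ produces an infinite family $(\sigma_{n},\xi_{n})$, and picking the sign of $\tau_{n}=\pm\sigma_{n}$ appropriately one obtains $\tau_{n}/\xi_{n}^{\ell/q}=-\alpha$ exactly, so $|y_{n}+\alpha|\equiv 0$ violates the bound of Theorem~\ref{t-2.1} and $L$ is not globally hypoelliptic.

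For necessity when $\alpha^{q}$ is Liouville, let $(j_{n},k_{n})$ be the approximating sequence with $k_{n}\to\infty$ and $|\alpha^{q}-j_{n}/k_{n}|<k_{n}^{-n}$. Applying the same Bezout-based valuation construction to $k_{n}\sigma^{q}=|j_{n}|\xi^{\ell}$ yields positive integer solutions $(\sigma_{n},\xi_{n})$ bounded by $\sigma_{n}+\xi_{n}\leqslant C_{1}(j_{n}k_{n})^{C_{2}}$, where $C_{2}=C_{2}(q,\ell)$ depends only on the chosen Bezout coefficients; since $|j_{n}|\leqslant k_{n}(|\alpha^{q}|+1)$ this gives $|\tau_{n}|+|\xi_{n}|\leqslant C_{3}\,k_{n}^{2C_{2}}$. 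The factorization identity then produces
\begin{equation*}
|y_{n}+\alpha|\leqslant C_{4}\,k_{n}^{-n}\leqslant C_{5}\,(|\tau_{n}|+|\xi_{n}|)^{-n/(2C_{2})},
\end{equation*}
so the inequality of Theorem~\ref{t-2.1} must fail for every $M$, contradicting global hypoellipticity. The main obstacle is precisely this last step: one must control the size of the Diophantine solutions $(\sigma_{n},\xi_{n})$ polynomially in $k_{n}$, so that the super-polynomial quality of the Liouville approximation of $\alpha^{q}$ survives the passage from rational approximations of $\alpha^{q}$ to approximations of $\alpha$ by the irrational numbers $\tau/\xi^{\ell/q}$.
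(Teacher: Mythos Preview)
Your argument is correct and follows the same overall strategy as the paper: both proofs hinge on relating $|\tau/\xi^{\ell/q}+\alpha|$ to the rational approximation $|\alpha^{q}-j/\xi^{\ell}|$ via a $q$-th power factorization, and on constructing integer solutions of $k\sigma^{q}=j\xi^{\ell}$ through the B\'ezout relation for $(\ell,q)$.

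The differences are purely in packaging. You factor over the complex $q$-th roots of unity, $y^{q}-(-\alpha)^{q}=\prod_{k}(y+\alpha\omega^{k})$, and bound the product $\prod_{k\geqslant 1}|y+\alpha\omega^{k}|$ above (for sufficiency) and below (for the Liouville case); the paper uses the real geometric-sum identity $\tau^{q}-\alpha^{q}\xi^{\ell}=(\tau-\alpha\xi^{\ell/q})\sum_{j}\tau^{q-j}(\alpha\xi^{\ell/q})^{j-1}$ to the same effect. For the integer construction you argue prime by prime via $p$-adic valuations, whereas the paper writes down one explicit monomial solution $\tau=j_{n}^{\ell+\widetilde q(\ell-1)}k_{n}^{\widetilde q}$, $\xi=k_{n}^{\widetilde p}j_{n}^{q-1+(\ell-1)\widetilde p}$ (with $\widetilde p\ell-\widetilde q q=1$) and checks directly that $\tau^{q}/\xi^{\ell}=j_{n}/k_{n}$; this explicit formula makes the polynomial bound $|\tau|+\xi\leqslant C\,(j_{n}+k_{n})^{K}$ immediate, which is exactly the ``main obstacle'' you flag. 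Finally, the paper organizes the irrational case as two contrapositives (not GH $\Rightarrow$ $\alpha^{q}$ Liouville; $\alpha^{q}$ Liouville $\Rightarrow$ not GH), while you prove sufficiency directly. One small point you left implicit: in the Liouville step the inequality $|y_{n}+\alpha|\leqslant C_{4}k_{n}^{-n}$ requires a uniform lower bound on $\prod_{k\geqslant 1}|y_{n}+\alpha\omega^{k}|$, which follows from $y_{n}\to-\alpha$ and $\alpha(\omega^{k}-1)\neq 0$ for $1\leqslant k\leqslant q-1$ (the case $\alpha=0$ being trivially non-hypoelliptic).
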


\begin{proof}
 Inequality \eqref{L-hom-1} is easily verified when $\beta \cdot \widetilde{\beta} \neq 0$, consequently $L$ is globally hypoelliptic in this case. Therefore, in order to prove Theorem \ref{gt0} it is enough to consider either $\beta=0$ or $\widetilde\beta=0$.

We start by considering the case $\beta=0 $ and $\widetilde\beta\neq0$. In this situation, it follows from \eqref{L-hom-1} and \eqref{eqq} that $L$ is globally hypoelliptic if and only if  there exist positive constants $C,$ $M$ and $R$ such that
\begin{equation}\label{eqqq1}
\left|\dfrac{\tau}{\xi^m}+\alpha\right|\geqslant C(|\tau|+\xi)^{-M},
\end{equation}
for all $(\tau,\xi)\in\mathbb{Z}\times\mathbb{N}$ such that $|\tau|+\xi\geqslant R.$

 Since $\beta=0$, if $\alpha=0$ then $p(\xi)=0$ for all $\xi>0$ and, therefore, $L$ is not globally hypoelliptic. From now on, without loss of generality, we assume that $\alpha>0.$

When $\alpha^{q}$ is a rational number, we prove that $L$ is not globally hypoelliptic by exhibiting infinitely many $(\tau,\xi)\in\mathbb{Z}\times\mathbb{N}$ such that $\left | {\tau}/{\xi^{m}} -\alpha \right|=0.$

We then write $\alpha^q=\widetilde{p}/\widetilde{q}$, with $\widetilde{p},\widetilde{q}\in\mathbb{N}$.  By prime factorization we have
\begin{equation*}
\widetilde{q}=q_1^{\gamma_1}\cdots q_r^{\gamma_r} \  \textrm{ and } \    \widetilde{p}=p_1^{\sigma_1}\cdots p_s^{\sigma_s}.
\end{equation*}

Since $\gcd(\ell,q)=1,$ there exists $(x_i,y_i)\in\mathbb{N}^2$ and $(v_j,w_j)\in\mathbb{N}^2$ such that
\begin{equation*}
\ell x_i-qy_i=\gamma_i, \ i=1,\ldots,r \ \textrm{ and } \ qv_j-\ell w_j=\sigma_j, \ j=1,\ldots,s.
\end{equation*}

Define
\begin{equation*}
\tau_n=n^\ell q_1^{y_1}\cdots q_r^{y_r}p_1^{v_1}\cdots p_s^{v_s} \  \textrm{ and } \    \xi_n=n^q q_1^{x_1}\cdots q_r^{x_r}p_1^{w_1}\cdots p_s^{w_s}.
\end{equation*}

It follows that $\widetilde{q}\tau_n^q=\widetilde{p}\xi_ n^\ell,$ for all $n\in\mathbb{N};$ hence
\begin{equation*}
\dfrac{\tau_n}{(\xi_n)^{m}}=\dfrac{\tau_n}{(\xi_n)^{\ell/q}}=\left(\, \frac{\,\widetilde{p}\,}{\,\widetilde{q}\,} \,\right)^{1/q}=\alpha, \  \textrm{for all} \ n\in\mathbb{N},
\end{equation*}
and then $L$ is not globally hypoelliptic.

From now on, assume that $\alpha^q$ is an irrational number.

If $L$ is not globally hypoelliptic,
it follows from \eqref{eqqq1} that there exists a sequence $(\tau_n,\xi_n)\in\mathbb{Z}\times\mathbb{N}$ such that
\[
\left|\dfrac{\tau_n}{\xi_n^{\ell/q}}-\alpha\right|<(|\tau_n|+\xi_n)^{-n},\quad |\tau_n|+\xi_n\geqslant n.
\]

By taking  $j_n=-\tau_n^q$ and $k_n=\xi_n^\ell$ we obtain \[\left|\dfrac{j_n}{k_n}+\alpha^{q}\right|=\left|-\left(\dfrac{\tau_n}{\xi_n^{\ell/q}}\right)^q+\alpha^{q}\right|=
\left|\dfrac{\tau_n}{\xi_n^{\ell/q}}-\alpha\right|\cdot\left|\displaystyle\sum_{j=1}^{q}\left(\dfrac{\tau_n}{\xi_n^{\ell/q}}\right)^{q-j}\alpha^{j-1}\right|.\]

Since $\displaystyle\sum_{j=1}^{q}({\tau_n}/{\xi_n^{\ell/q}})^{q-j}\alpha^{j-1}$ goes to $ q\alpha^{q-1},$ as $n$ goes to infinity, it follows that
\[\left|\dfrac{j_n}{k_n}+\alpha^q\right|\leqslant C\left|\dfrac{\tau_n}{\xi_n^{\ell/q}}-\alpha\right|<
C(|\tau_n|+\xi_n)^{-n}\leqslant  Ck_n^{-n/\ell},\quad \textrm{for all} \ n,\]
where the constant $C>0$ does not depend on $j_n$ and $k_n$.

The estimate above implies that $\alpha^q$ is a Liouville number.

\smallskip
Assuming that $\alpha^{q}$ is a Liouville number, let us show that $L$ is not globally hypoelliptic.
Indeed, if $\alpha^{q}$ is a Liouville number, then there is a sequence $(j_n,k_n)\in\mathbb{N}^2,$ $j_n+k_n\geqslant n$, such that
\begin{equation*}
|j_n-\alpha^q k_n|<(j_n+k_n)^{-n}.
\end{equation*}
By multiplying this inequality by
\begin{equation}\label{divinmult}
j_n^{(q-1)\ell+(\ell-1)\widetilde{p}\ell}\,k_n^{\widetilde{q}q},
\end{equation}
where $\widetilde{p}$ and $\widetilde{q}$ are positive integers such that $\widetilde{p}\ell-\widetilde{q}q=1$, we obtain
\[|(j_n^{\ell+\widetilde{q}(\ell-1)}k_n^{\widetilde{q}})^{q}-\alpha^q(k_n^{\widetilde{p}}j_n^{q-1+(\ell-1)\widetilde{p}})^\ell|<j_n^{(q-1)\ell+(\ell-1)\widetilde{p}\ell}k_n^{\widetilde{q}q}(j_n+k_n)^{-n}.\]

Suppose, by contradiction, that $L$ is globally hypoelliptic. By \eqref{eqqq1}, there exist positive constants $C$, $M$ and $R$ such that
$$|\tau-\alpha\xi^{\ell/q}|\geqslant C(|\tau|+\xi)^{-M},$$
for all $(\tau,\xi)\in\mathbb{N}\times \mathbb{N}$ such that $\tau+\xi>R.$

Since
\begin{eqnarray*}
|\tau^q-\alpha^q\xi^\ell|&=&|\tau-\alpha\xi^{\ell/q}|\cdot\left|\sum_{\kappa=1}^{q}\tau^{q-\kappa}(\alpha\xi^{\ell/q})^{\kappa-1}\right| \\[2mm]
   &\geqslant& C (\tau+\xi)^{-M} \left|\sum_{\kappa=1}^{q}\tau^{q-\kappa}(\alpha\xi^{\ell/q})^{\kappa-1}\right|,
\end{eqnarray*}
and
\begin{equation*}
\left|\sum_{\kappa=1}^{q}\left(j_n^{\ell+\widetilde{q}(\ell-1)}k_n^{\widetilde{q}}\right)^{q-\kappa}\alpha^{\kappa-1}\left(k_n^{\widetilde{p}}j_n^{q-1+(\ell-1)\widetilde{p}}\right)^{\frac{\ell(\kappa-1)}{q}}\right|\geqslant \widetilde{C},\quad (j_n\geqslant 1,\,\, k_n\geqslant 1),
\end{equation*}
for some $\widetilde{C}>0$, it follows that
\begin{equation*}
 C\widetilde{C}\leqslant \left (j_n^{\ell+\widetilde{q}(\ell-1)}k_n^{\widetilde{q}}+k_n^{\widetilde{p}}j_n^{q-1+(\ell-1)\widetilde{p}}\right)^{M}j_n^{(q-1)\ell+(\ell-1)\widetilde{p}\ell}k_n^{\widetilde{q}q}(j_n+k_n)^{-n},
\end{equation*}
 for all $n\in\mathbb{N}.$

Now, by taking
\begin{equation*}
K=\max\{\ell+\widetilde{q}(\ell-1), \widetilde{q} ,\widetilde{p}, q-1+(\ell-1)\widetilde{p},  (q-1)\ell+(\ell-1)\widetilde{p}\ell, \widetilde{q}q \}
\end{equation*}
we obtain
\begin{eqnarray*}
0<C\widetilde{C}&\leqslant & (j_n^{\ell+\widetilde{q}(\ell-1)}k_n^{\widetilde{q}}+k_n^{\widetilde{p}}j_n^{q-1+(\ell-1)\widetilde{p}})^{M} j_n^{(q-1)\ell+ (\ell-1)\widetilde{p}\ell} k_n^{\widetilde{q}q}(j_n+k_n)^{-n}\\[4mm]
&\leqslant&(j_n^{K}k_n^{K}+k_n^Kj_n^{K})^{M}j_n^{K}k_n^{K}(j_n+k_n)^{-n}\\[4mm]
&=&2^M(j_n+k_n)^{-n+2K(M+1)},
\end{eqnarray*}
for all $n\in\mathbb{N},$ which  is a contradiction, since the right-hand side goes to zero as $n$ goes to infinity.

Finally, in the case in which $\beta\neq0$ and $\widetilde{\beta}=0,$ a slight modification in the previous arguments give us that $L$ is globally hypoelliptic if and only if $\widetilde{\alpha}^q$ is an irrational non-Liouville number. Analogously, if $\beta=0$ and $\widetilde{\beta}=0,$ then $L$ is globally hypoelliptic if and only if both $\alpha^q$ and $\widetilde{\alpha}^q$ are irrational non-Liouville numbers.

\end{proof}

As consequence of Theorem \ref{gt0} we obtain the following examples.

\begin{example}
Let $m=\ell/q$ be a positive rational number with $gcd(\ell,q)=1$, then  $L=D_t+\alpha (D_x^2)^{m/2}$ is globally hypoelliptic if and only if
$\alpha^q$ is an irrational non Liouville number. In particular, for the non-Liouville number $\alpha=\sqrt{2}$ the operator $L=D_t+\alpha(D_x^2)^{1/2}$ is globally hypoelliptic while   $L=D_t+\alpha(D_x^2)^{1/4}$ is not.
\end{example}

\begin{example}
Let  $\lambda=\sum_{n=1}^{\infty}10^{-n!}$ be the Liouville constant. For each integer $q\geqslant 2$ we have that ${\lambda}^q \, 3/2$ is a Liouville number while $\lambda \, \sqrt[q]{3/2}$  is not (see \cite{DJ}). Therefore, by taking  $\alpha=\lambda\, \sqrt[q]{3/2}$  we have that $L=D_t+\alpha (D_x^2)^{1/2q}$ is not globally hypoelliptic for each integer $q\geqslant  2$.
\end{example}

\section{The variable coefficient operators}\label{sectgr}

In this section we study the global hypoellipticity of the operator \eqref{MO}, which we recall
\[L=D_t+(a+ib)(t)P(D_x),\quad (t,x)\in\mathbb{T}^1\times\mathbb{T}^N,\]
where $a(t)$ and $b(t)$ are real valued smooth functions on $\mathbb{T}^1$ and $P(D_x)$ is a pseudo-differential on $\mathbb{T}^N$ with symbol $p=p(\xi),$ $\xi\in\mathbb{Z}^N.$

Without any assumption about the behavior of $p(\xi),$ as $|\xi|\rightarrow\infty,$ we will present a necessary condition and, also, sufficient conditions for the global hypoellipticity of $L.$

First, we show that the global hypoellipticity of $L_0=D_t+(a_0+ib_0)P(D_x),$ where \[a_0=(2 \pi)^{-1} \int_{0}^{2 \pi} a(t) dt\quad\textrm{and}\quad b_0=(2 \pi)^{-1} \int_{0}^{2 \pi} b(t) dt,\] is necessary for the global hypoellipticity of $L$ (Theorem \ref{ncm2}). After this, we will show that this condition is also sufficient provided that the imaginary part of the function
\begin{equation*}
t\in\mathbb{T}^1  \mapsto {\mathcal{M}}(t, \xi) \doteq  (a+ib)(t)p(\xi), \ \xi \in \mathbb{Z}^N,
\end{equation*}
does not change sign, for all $|\xi|$ large enough (Theorem \ref{gt1}).

By using partial Fourier series in the variable $x,$ we can write a distribution $u$ in $\mathcal{D}' (\mathbb{T}^1\times\mathbb{T}^{N})$ as
\begin{equation*}
u = \sum_{\xi \in \mathbb{Z}^N} \widehat{u}(t, \xi) e^{i x \xi},
\end{equation*}
where $\widehat{u}(t,\xi) = (2\pi)^{-N} \langle u(t, \cdot) , e^{-i x \cdot \xi}\rangle.$ Hence, the equation $(i L) u =f$ lead us to consider the differential equations
\begin{equation}\label{diff-eq}
\partial_t \widehat{u}(t, \xi) +    i{\mathcal{M}}(t, \xi) \widehat{u}(t,\xi) = \widehat{f}(t, \xi), \ t \in \mathbb{T}^1, \ \textrm{for all} \ \xi \in \mathbb{Z}^N.
\end{equation}

With the notations \[{\mathcal{M}}_0(\xi) = (2 \pi)^{-1} \int_{0}^{2 \pi} {\mathcal{M}}(t, \xi) dt=(a_0+ib_0)p(\xi)\] and
\begin{equation}\label{ZM-set}
Z_{{\mathcal{M}}} = \{\xi \in \mathbb{Z}^N; \, {\mathcal{M}}_0(\xi) \in\mathbb{Z} \},
\end{equation}
we have:

\begin{lemma}\label{lt1}If $u\in \mathcal{D}'(\mathbb{T}^1\times\mathbb{T}^N)$ and $iLu=f \in C^{\infty}(\mathbb{T}^1\times\mathbb{T}^N),$ then equation \eqref{diff-eq} implies that $\widehat{u}(\cdot,\xi)$ belongs to $C^{\infty}(\mathbb{T}^1),$ for all $\xi\in \mathbb{Z}^N.$ Moreover, for each $\xi\notin Z_{\mathcal{M}},$ equation \eqref{diff-eq} has a unique solution, which can be written in the following two ways:
\begin{align}
\widehat{u}(t, \xi) = \frac{1}{1 - e^{-  2 \pi i{\mathcal{M}_0}(\xi)}} \int_{0}^{2\pi}\exp\left(-i\int_{t-s}^{t}\!\!{\mathcal{M}}(r, \, \xi) \, dr\right) \widehat{f}(t-s, \xi)ds, \label{Solu-1}
\end{align}
or
\begin{align}
\widehat{u}(t, \xi) = \frac{1}{e^{ 2 \pi i{\mathcal{M}_0}(\xi)} - 1} \int_{0}^{2\pi}\exp\left(i\int_{t}^{t+s}\!\!{\mathcal{M}}(r, \, \xi) \, dr\right) \widehat{f}(t+s, \xi)ds. \label{Solu-2}
\end{align}
\end{lemma}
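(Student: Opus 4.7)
The plan is to proceed in two stages. First I would establish that $\widehat{u}(\cdot,\xi)\in C^\infty(\mathbb{T}^1)$ for every $\xi\in\mathbb{Z}^N$ by means of an integrating factor, and then exploit periodicity to solve the ODE \eqref{diff-eq} explicitly when $\xi\notin Z_{\mathcal{M}}$.

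For the smoothness step, I would fix $\xi$ and introduce the smooth, nowhere-vanishing integrating factor $h(t)=\exp\!\big(i\int_0^t \mathcal{M}(r,\xi)\,dr\big)$ on $\mathbb{R}$. Regarding $\widehat{u}(\cdot,\xi)$ and $\widehat{f}(\cdot,\xi)$ as $2\pi$-periodic distributions on $\mathbb{R}$ and applying the distributional Leibniz rule together with $h'=i\mathcal{M}(\cdot,\xi)h$, equation \eqref{diff-eq} becomes $\partial_t\big(h\,\widehat{u}(\cdot,\xi)\big)=h\,\widehat{f}(\cdot,\xi)$ in $\mathcal{D}'(\mathbb{R})$. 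The right-hand side is smooth, and any distribution on $\mathbb{R}$ whose derivative is smooth is itself smooth, since it differs from an antiderivative of its derivative by a constant distribution. Hence $h\,\widehat{u}(\cdot,\xi)\in C^\infty(\mathbb{R})$, and dividing by the nowhere-zero smooth factor $h$ yields $\widehat{u}(\cdot,\xi)\in C^\infty(\mathbb{T}^1)$.

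Once smoothness is available, I would integrate the identity $\partial_t(h\widehat{u})=h\widehat{f}$ from $t$ to $t+2\pi$. The monodromy relation $h(t+2\pi)=h(t)\,e^{2\pi i\mathcal{M}_0(\xi)}$, which follows from the $2\pi$-periodicity of $\mathcal{M}(\cdot,\xi)$, together with the periodicity of $\widehat{u}$, yields
\[
\big(e^{2\pi i\mathcal{M}_0(\xi)}-1\big)\,h(t)\,\widehat{u}(t,\xi)=\int_t^{t+2\pi}h(s)\,\widehat{f}(s,\xi)\,ds.
\]
When $\xi\notin Z_{\mathcal{M}}$ the prefactor is non-zero, so $\widehat{u}(t,\xi)$ is uniquely determined; dividing through and using the identity $h(s)/h(t)=\exp\!\big(i\int_t^s\mathcal{M}(r,\xi)\,dr\big)$ together with the substitution $s=t+\sigma$ produces formula \eqref{Solu-2}. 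Integrating instead from $t-2\pi$ to $t$ and substituting $s=t-\sigma$ yields, symmetrically, formula \eqref{Solu-1} with prefactor $\big(1-e^{-2\pi i\mathcal{M}_0(\xi)}\big)^{-1}$. Uniqueness can also be read off directly from the observation that any two periodic solutions differ by a multiple of $h^{-1}$, and this function is $2\pi$-periodic precisely when $\xi\in Z_{\mathcal{M}}$.

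The main obstacle I anticipate is the regularity step, since $\widehat{u}(\cdot,\xi)$ is only a distribution a priori: one must justify multiplying it by the smooth non-periodic factor $h$, invoke the distributional product and chain rules carefully, and use the elementary fact that a distribution with smooth derivative is smooth. Once this elliptic-regularity-type step for the ODE is in place, the derivation of the explicit formulas reduces to a routine periodicity and change-of-variables computation.
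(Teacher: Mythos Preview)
Your argument is correct and is precisely the standard integrating-factor approach one would expect here. Note, however, that the paper does not actually supply a proof of this lemma: it is stated without proof and treated as routine, so there is no argument in the paper to compare against. Your write-up fills that gap cleanly; the only minor caution is the one you already flag, namely that $h$ is not $2\pi$-periodic, so the computation must be carried out on $\mathbb{R}$ with periodic representatives, which you handle correctly.
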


Furthermore, we have the following characterization for the global hypoellipticity of $L_0$.

\begin{proposition}\label{lt2} The following statements are equivalent:
\begin{itemize}
\item[i)] $L_0$ is globally hypoelliptic;
\item[ii)]  There exist positive constants $C,$ $M,$ and $R$ such that
\[|\tau+\mathcal{M}_0(\xi)|\geqslant C(|\tau| + |\xi|)^{-M}, \ \textrm{for all} \ (|\tau| + |\xi|) \geqslant R;\]
\item[iii)] There exist positive constants $\widetilde{C},$ $\widetilde{M},$ and $\widetilde{R}$ such that
\[|1-e^{\pm2\pi i\mathcal{M}_0(\xi)}|\geqslant \widetilde{C}|\xi|^{-\widetilde{M}}, \ \textrm{for all} \  |\xi|\geqslant \widetilde{R}.\]
\end{itemize}
\end{proposition}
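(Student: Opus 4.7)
The plan is to establish the two equivalences (i)$\Leftrightarrow$(ii) and (ii)$\Leftrightarrow$(iii) independently.

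For (i)$\Leftrightarrow$(ii), I would simply invoke Theorem~\ref{t-2.1}. The operator $L_0 = D_t + (a_0+ib_0)P(D_x)$ is itself a constant-coefficient operator of the form \eqref{LCC} (with $P$ replaced by $(a_0+ib_0)P$, whose toroidal symbol $(a_0+ib_0)p(\xi) = \mathcal{M}_0(\xi)$ still lies in $S^m(\mathbb{Z}^N)$), so the characterization in Theorem~\ref{t-2.1} reads immediately as condition (ii).

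The substantive content is the equivalence (ii)$\Leftrightarrow$(iii), which is a standard Diophantine translation. Writing $\mathcal{M}_0(\xi) = A(\xi) + iB(\xi)$ with $A,B$ real valued, the polynomial growth $p\in S^m(\mathbb{Z}^N)$ yields $|A(\xi)| + |B(\xi)| \leq C\langle\xi\rangle^m$. The two identities driving the argument are
\begin{equation*}
\min_{\tau\in\mathbb{Z}}|\tau+\mathcal{M}_0(\xi)|^2 = \mathrm{dist}(A(\xi),\mathbb{Z})^2 + B(\xi)^2
\end{equation*}
and
\begin{equation*}
|1 - e^{\pm 2\pi i \mathcal{M}_0(\xi)}|^2 = \bigl(1 - e^{\mp 2\pi B(\xi)}\bigr)^2 + 4e^{\mp 2\pi B(\xi)}\sin^2(\pi A(\xi)).
\end{equation*}
For (iii)$\Rightarrow$(ii), I fix $(\tau,\xi)$ and set $w = \tau + \mathcal{M}_0(\xi)$; since $e^{2\pi i w} = e^{2\pi i \mathcal{M}_0(\xi)}$, the bound is trivial when $|w|\geq 1$, and when $|w|<1$ the elementary estimate $|1-e^{2\pi i w}|\leq 2\pi e^{2\pi}|w|$ transfers the polynomial lower bound from (iii) to $|w|$; one then passes from $|\xi|^{-\tilde M}$ to $(|\tau|+|\xi|)^{-M}$ using $|\mathcal{M}_0(\xi)| \leq C\langle\xi\rangle^m$. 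For (ii)$\Rightarrow$(iii), I choose $\tau_0\in\mathbb{Z}$ to be the nearest integer to $-A(\xi)$, whence $|\tau_0|\leq 1 + C\langle\xi\rangle^m$, apply (ii) at $\tau_0$ to obtain a polynomial lower bound on $\sqrt{\mathrm{dist}(A,\mathbb{Z})^2 + B^2}$ in terms of $|\xi|$, and read off (iii) from the second identity above.

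The main obstacle lies in the last step of (ii)$\Rightarrow$(iii): the identity shows $|1 - e^{\pm 2\pi i \mathcal{M}_0(\xi)}|^2$ is comparable to $\mathrm{dist}(A(\xi),\mathbb{Z})^2 + B(\xi)^2$ only when $|B(\xi)|$ stays bounded, where $|1 - e^{\mp 2\pi B}| \asymp |B|$ and $\sin^2(\pi A) \asymp \mathrm{dist}(A,\mathbb{Z})^2$. The regime $|B(\xi)|\to\infty$ must be handled separately and is in fact easier: if $B(\xi)\to +\infty$ along a subsequence then $|1-e^{-2\pi i \mathcal{M}_0(\xi)}|\to\infty$, while if $B(\xi)\to -\infty$ then $|1-e^{2\pi i \mathcal{M}_0(\xi)}|\to\infty$. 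This is exactly why condition (iii) is imposed with both signs $\pm$ simultaneously, and the case analysis must be organized to produce uniform constants $\widetilde{C},\widetilde{M}$ independent of $\xi$.
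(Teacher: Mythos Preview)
Your plan is correct and matches the paper's own treatment. The paper does not give a self-contained proof of this proposition: it states that (i)$\Leftrightarrow$(ii) follows from Theorem~\ref{t-2.1} (exactly as you do) and that (ii)$\Leftrightarrow$(iii) is a technical fact obtained by a slight modification of Lemma~3.1 in \cite{BDG}; you are simply spelling out that technical fact explicitly, and your two displayed identities together with the case split on $|B(\xi)|$ bounded versus unbounded are precisely what that lemma amounts to.

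One small clarification: your closing remark slightly overstates the role of the $\pm$. When $|B(\xi)|$ is large, \emph{both} quantities $|1-e^{+2\pi i\mathcal{M}_0(\xi)}|$ and $|1-e^{-2\pi i\mathcal{M}_0(\xi)}|$ are bounded below by a positive absolute constant (one tends to $1$, the other to $\infty$), so neither sign is problematic in that regime. In the bounded-$B$ regime the two differ by the harmless factor $e^{\pm 2\pi B(\xi)}$, so the conditions for the two signs are in fact equivalent to one another; the $\pm$ in (iii) is a convenience for later use rather than a genuine strengthening. This does not affect the validity of your argument.
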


The equivalence $i)\Leftrightarrow ii)$ follows from Theorem \ref{t-2.1} and the equivalence $ii)\Leftrightarrow iii)$
is a technical result that is a slight modification of the proof of Lemma 3.1 of \cite{BDG}.

\subsection{A necessary condition}

Our first result in this section is the following:

\begin{proposition}\label{tt1}
If $L$ is globally hypoelliptic, then the set $Z_{\mathcal{M}}$ defined in \eqref{ZM-set} is finite.
\end{proposition}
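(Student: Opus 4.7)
The plan is to argue by contradiction. Assume $Z_\mathcal{M}$ is infinite; I will construct $u \in \mathcal{D}'(\mathbb{T}^1 \times \mathbb{T}^N)$ with $Lu = 0 \in C^\infty$ but $u \notin C^\infty$, contradicting the global hypoellipticity of $L$.

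The starting observation is that $\mathcal{M}_0(\xi) \in \mathbb{Z}$ is precisely what makes the scalar ODE $D_t w + (a+ib)(t)\, p(\xi)\, w = 0$ admit nontrivial $2\pi$-periodic solutions; every such solution is a scalar multiple of
\[
v_\xi(t) = \exp\!\left(-i \int_0^{t} (a+ib)(r)\, p(\xi)\, dr\right),
\]
and $v_\xi(2\pi) = v_\xi(0)$ amounts to $e^{-2\pi i \mathcal{M}_0(\xi)} = 1$, i.e.\ $\xi \in Z_\mathcal{M}$. Pick a sequence $(\xi_j) \subset Z_\mathcal{M}$ with $|\xi_j| \to \infty$, normalize $w_{\xi_j} \doteq v_{\xi_j}/\|v_{\xi_j}\|_{L^2(\mathbb{T}^1)}$, and define
\[
u(t, x) \doteq \sum_{j=1}^{\infty} w_{\xi_j}(t)\, e^{i x \cdot \xi_j}.
\]

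Three checks remain. First, the full Fourier coefficients $\widehat{u}(\tau, \xi)$ vanish unless $\xi = \xi_j$ for some $j$, and Plancherel's identity gives
\[
\sum_{\tau \in \mathbb{Z}} |\widehat{u}(\tau, \xi_j)|^2 \;=\; (2\pi)^{-1} \|w_{\xi_j}\|_{L^2}^2 \;=\; (2\pi)^{-1},
\]
so $|\widehat{u}(\tau, \xi)| \leq (2\pi)^{-1/2}$ uniformly and $u$ defines a distribution on $\mathbb{T}^1 \times \mathbb{T}^N$. Second, $L(w_{\xi_j}(t)\, e^{ix \cdot \xi_j}) = 0$ for each $j$, so $Lu = 0 \in C^\infty$. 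Third, if $u$ were smooth, then its Fourier coefficients would be rapidly decreasing, giving, for every $N \geq 1$,
\[
(2\pi)^{-1} \;=\; \sum_{\tau \in \mathbb{Z}} |\widehat{u}(\tau, \xi_j)|^2 \;\leq\; C_N \sum_{\tau \in \mathbb{Z}} (1+|\tau|+|\xi_j|)^{-2N} \;\leq\; C'_N\, (1+|\xi_j|)^{1-2N} \;\xrightarrow[j \to \infty]{}\; 0,
\]
a contradiction.

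The main technical point is the choice of $L^2$-normalization for the $v_{\xi_j}$. An $L^\infty$-normalization does not directly yield a contradiction, because the $L^2$-mass of the normalized solutions could decay rapidly in $|\xi_j|$. The $L^2$-normalization simultaneously (a) guarantees a uniform bound on the Fourier coefficients of $u$, so that $u$ is a distribution, and (b) pins down the positive quantity $\sum_\tau |\widehat{u}(\tau, \xi_j)|^2 = (2\pi)^{-1}$ along the slice $\xi = \xi_j$, which is incompatible with rapid Fourier decay as $|\xi_j| \to \infty$.
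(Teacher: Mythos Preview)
Your proof is correct and follows the same overall strategy as the paper: pick a sequence $\xi_n \in Z_{\mathcal{M}}$ with $|\xi_n|\to\infty$, use the explicit $2\pi$-periodic solutions $v_{\xi_n}$ of the associated ODE to build $u=\sum_n c_n v_{\xi_n}(t)\,e^{ix\cdot\xi_n}$ with $Lu=0$, and choose the scalars $c_n$ so that $u$ is a distribution but not smooth. The only difference is the normalization. The paper takes $c_n$ so that $\sup_t |\widehat{u}(t,\xi_n)|=1$, attained at some $t_n$ (this amounts to $c_n=1/\|v_{\xi_n}\|_{\infty}$); then $|\widehat{u}(t,\xi_n)|\leqslant 1$ gives $u\in\mathcal{D}'$, and $|\widehat{u}(t_n,\xi_n)|=1$ already rules out rapid decay of the partial $x$-Fourier coefficients, hence $u\notin C^\infty$. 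So your closing remark is off the mark: an $L^\infty$-normalization works perfectly well and is exactly what the paper does, because smoothness of $u$ forces $\sup_t |\widehat{u}(t,\xi)|$ (not just the $L^2$-mass) to decay rapidly in $|\xi|$. Your $L^2$/Plancherel route is a valid alternative, just not a necessary detour.
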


\begin{proof} If $Z_{\mathcal{M}}$ is infinite, then there exists a sequence $\{\xi_n\}$ such that $|\xi_n|$ is increasing and $\mathcal{M}_0(\xi_n)\in \mathbb{Z}.$
Set
\[c_{n}=\exp\left({-\int_{0}^{t_n}\Im\mathcal{M}(r,\xi_n)dr}\right),\]
where $t_n\in[0,2\pi]$ is such that
\[\int_{0}^{t_n}\Im\mathcal{M}(r,\xi_n)dr=\max_{t\in[0,2\pi]}\int_{0}^{t}\Im\mathcal{M}(r,\xi_n)dr.\]
For each $\xi_n$ the function
\[\widehat{u}(t,\xi_n)=c_n \exp\left({-i\int_{0}^{t}\mathcal{M}(r,\xi_n)dr}\right)\]
is smooth on $\mathbb{T}^1$ and satisfies the equation
\begin{equation*}
\partial_t\widehat{u}(t,\xi_n)+i\mathcal{M}(t,\xi_n)\widehat{u}(t,\xi_n)=0.
\end{equation*}

Moreover, $|\widehat{u}(t,\xi_n)|\leqslant 1,$ for all $t\in[0,2\pi],$ and $|\widehat{u}(t_{n},\xi_n)|=1.$ Hence,
\begin{equation*}
u=\sum_{n=1}^{\infty}\widehat{u}(t,\xi_n) e^{ix\xi_n} \in \mathcal{D}'(\mathbb{T}^1\times\mathbb{T}^N)\setminus C^{\infty}(\mathbb{T}^1\times\mathbb{T}^N),
\end{equation*}
and satisfies $Lu=0.$ Therefore, $L$ is not globally hypoelliptic.
\end{proof}

\begin{remark}\label{remarkcc} In the case of constant coefficients the previous result implies that $L_0$ is not globally hypoelliptic if $Z_\mathcal{M}$ is infinite. Therefore, every time we assume that $L_0$ is globally hypoelliptic it is understood that $Z_\mathcal{M}$ is finite.
\end{remark}

Now we present our first general result on global hypoellipticity.

\begin{theorem}\label{ncm2}If $L$ is globally hypoelliptic, then $L_0$ is globally hypoelliptic.
\end{theorem}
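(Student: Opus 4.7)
The plan is to prove the contrapositive: assuming $L_0$ is not globally hypoelliptic, I will produce a distribution $u\in\mathcal{D}'(\mathbb{T}^1\times\mathbb{T}^N)\setminus C^\infty(\mathbb{T}^1\times\mathbb{T}^N)$ with $Lu\in C^\infty$. By Proposition \ref{tt1} and Remark \ref{remarkcc}, I may assume $Z_\mathcal{M}$ is finite (otherwise $L$ itself fails global hypoellipticity and we are done). The negation of condition (ii) in Proposition \ref{lt2} then provides a sequence $(\tau_n,\xi_n)\in\mathbb{Z}\times\mathbb{Z}^N$, with $\xi_n\notin Z_\mathcal{M}$ and $|\xi_n|\to\infty$, satisfying
\begin{equation*}
|\tau_n+\mathcal{M}_0(\xi_n)|<(|\tau_n|+|\xi_n|)^{-n},\quad n\in\mathbb{N}.
\end{equation*}
Since $|\mathcal{M}_0(\xi_n)|\leq C|\xi_n|^m$ (where $m$ is the order of $P(D_x)$), the bound above also forces the a priori control $|\tau_n|=O(|\xi_n|^m)$.

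The construction hinges on the periodic primitive
\begin{equation*}
\Phi_n(t)=\int_0^t\bigl(\mathcal{M}(r,\xi_n)-\mathcal{M}_0(\xi_n)\bigr)dr,
\end{equation*}
which is smooth and $2\pi$-periodic because $\mathcal{M}(\cdot,\xi_n)-\mathcal{M}_0(\xi_n)$ has zero mean. Setting $C_n=\exp(-\max_{t\in\mathbb{T}^1}\Im\Phi_n(t))>0$, I define
\begin{equation*}
\widetilde u_n(t)=C_n\exp\bigl(-i\Phi_n(t)+i\tau_n t\bigr),\quad t\in\mathbb{T}^1.
\end{equation*}
Since $\tau_n\in\mathbb{Z}$ and $\Phi_n$ is periodic, $\widetilde u_n\in C^\infty(\mathbb{T}^1)$ with $\|\widetilde u_n\|_\infty=1$ attained at some $t_n^\ast$. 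A direct computation using $\Phi_n'(t)=\mathcal{M}(t,\xi_n)-\mathcal{M}_0(\xi_n)$ gives the identity
\begin{equation*}
\bigl(\partial_t+i\mathcal{M}(t,\xi_n)\bigr)\widetilde u_n(t)=i\bigl(\tau_n+\mathcal{M}_0(\xi_n)\bigr)\widetilde u_n(t).
\end{equation*}

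Now I put $u(t,x)=\sum_{n}\widetilde u_n(t)\,e^{ix\cdot\xi_n}$. The uniform bound $|\widetilde u_n|\leq 1$ ensures $u\in\mathcal{D}'(\mathbb{T}^1\times\mathbb{T}^N)$; the fact that $\sup_t|\widetilde u_n(t)|=1$ does not decay as $n\to\infty$ prevents $u$ from being smooth. From the key identity,
\begin{equation*}
iLu(t,x)=\sum_n i\bigl(\tau_n+\mathcal{M}_0(\xi_n)\bigr)\widetilde u_n(t)\,e^{ix\cdot\xi_n}.
\end{equation*}

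The main technical obstacle is verifying that $Lu\in C^\infty$, which requires showing that each partial Fourier coefficient above is rapidly decreasing in $|\xi_n|$ together with all its $t$-derivatives. From the explicit form of $\widetilde u_n$, one gets $\partial_t^k\widetilde u_n=Q_{k,n}(t)\widetilde u_n$ with $Q_{k,n}$ a polynomial in $\tau_n$ and the $t$-derivatives of $\mathcal{M}(\cdot,\xi_n)$, giving $|\partial_t^k\widetilde u_n(t)|\leq C_k(|\tau_n|+|\xi_n|^m)^k$. Combined with the defect estimate and the polynomial control $|\tau_n|=O(|\xi_n|^m)$, this yields
\begin{equation*}
\bigl|\partial_t^k\bigl[i(\tau_n+\mathcal{M}_0(\xi_n))\widetilde u_n(t)\bigr]\bigr|\leq C_k\,|\xi_n|^{-n+(k+1)m}.
\end{equation*}
The delicate point is that although the polynomial growth in $k$ is large, the freedom to select $n$ arbitrarily large in the failure of Proposition \ref{lt2}(ii) makes the exponent $-n+(k+1)m$ eventually as negative as desired. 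Hence, for every $k$ and every $N$, the above is bounded by $C_{k,N}|\xi_n|^{-N}$ for all $n$ sufficiently large, so $iLu$ belongs to $C^\infty(\mathbb{T}^1\times\mathbb{T}^N)$ while $u$ does not. This contradicts the global hypoellipticity of $L$ and completes the proof.
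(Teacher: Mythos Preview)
Your proof is correct and takes a genuinely different, more direct route than the paper's argument. The paper first uses condition (iii) of Proposition~\ref{lt2} (the bound on $|1-e^{-2\pi i\mathcal{M}_0(\xi_n)}|$), builds a smooth right-hand side $f$ with the aid of a cutoff function supported away from a limit point $t_0$ of a carefully chosen sequence $t_n$, and then writes $\widehat{u}(\cdot,\xi_n)$ via the integral formula \eqref{Solu-1}; the estimates showing $u\in\mathcal{D}'\setminus C^\infty$ require splitting into the cases $t-s\geq 0$ and $t-s<0$ and tracking the behaviour of $\Im\mathcal{M}_0(\xi_n)$. You instead work with condition (ii) of Proposition~\ref{lt2} and build $\widehat{u}(\cdot,\xi_n)$ directly as an approximate null solution $\widetilde u_n=C_n e^{-i\Phi_n+i\tau_n t}$, obtaining the exact identity $(\partial_t+i\mathcal{M}(\cdot,\xi_n))\widetilde u_n=i(\tau_n+\mathcal{M}_0(\xi_n))\widetilde u_n$; this dispenses with the cutoff, the subsequence $t_n\to t_0$, and the integral formula altogether. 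In effect your construction is the natural extension of the proof of Proposition~\ref{tt1} from exact integers $\mathcal{M}_0(\xi_n)\in\mathbb{Z}$ to near-integers, and it yields a cleaner path to the same conclusion. One cosmetic remark: in your final displayed bound the exponent should be $-n+km_+$ (with $m_+=\max(m,0)$) rather than $-n+(k+1)m$, but this does not affect the argument since all that matters is that the exponent tends to $-\infty$ as $n\to\infty$ for each fixed $k$.
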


\begin{proof}We assume that $L_0$ is not globally hypoelliptic and prove that $L$ is not globally hypoelliptic.

By Proposition \ref{lt2}, there is a sequence $\{\xi_n\}$ such that $|\xi_n|$ is strictly increasing, $|\xi_n|>n,$ and \begin{equation}\label{est5}
|1-e^{-2\pi i\mathcal{M}_0(\xi_n)}|<|\xi_n|^{-n}, \ \textrm{for all} \ n\in\mathbb{N}.
\end{equation}

By Proposition \ref{tt1}, it is enough to consider the case where $Z_{\mathcal{M}}$ is finite and $\xi_n\not\in Z_{\mathcal{M}},$ for all $n.$

For each $n,$ we may choose $t_n\in[0,2\pi]$ so that $\int_{t_n}^{t}\Im\mathcal{M}(r,\xi_n)dr\leqslant 0,$ for all $t\in[0,2\pi].$

Indeed, for all $t\in[0,2\pi]$ we write
\[\int_{t_n}^{t}\Im\mathcal{M}(r,\xi_n)dr=\int_{0}^{t}\Im\mathcal{M}(r,\xi_n)dr-\int_{0}^{t_n}\Im\mathcal{M}(r,\xi_n)dr,\]
and it is enough to consider $t_n$ satisfying \[\int_{0}^{t_n}\Im\mathcal{M}(r,\xi_n)dr=\max_{t\in[0,2\pi]}\int_{0}^{t}\Im\mathcal{M}(r,\xi_n)dr.\]

By passing to a subsequence, we may assume that there exists $t_{0}\in[0,2\pi]$ such that $t_n\rightarrow t_0,$ as $n\rightarrow\infty.$

Let $I$ be a closed interval in $(0,2\pi)$ such that $t_0\not\in I.$ Consider $\phi$ belonging to $C^{\infty}_{c}(I,\mathbb{R}),$ such that $0\leqslant \phi(t)\leqslant 1$ and
$\int_{0}^{2\pi}\phi(t)dt>0.$

For each $n,$ we define $\widehat{f}(\cdot,\xi_n)$ as being the $2\pi-$periodic extension of
\[(1-e^{-2\pi i\mathcal{M}_0(\xi_n)})\exp\left(-\int_{t_n}^{t}i\mathcal{M}(r,\xi_n)dr\right)\phi(t).\]

Since $p(\xi)$ increases slowly, $\int_{t_n}^{t}\Im\mathcal{M}(r,\xi_n)dr\leqslant 0$ for all $t\in[0,2\pi],$ and since \eqref{est5} holds, it follows that $\widehat{f}(\cdot,\xi_n)$ decays rapidly. Hence, \[f(t,x)=\sum_{n=1}^{\infty}\widehat{f}(t,\xi_n)e^{ix\xi_n} \in C^{\infty}(\mathbb{T}^1\times\mathbb{T}^N).\]

In order to exhibit a distribution $u\in\mathcal{D}'(\mathbb{T}^1\times\mathbb{T}^N)\setminus C^{\infty}(\mathbb{T}^1\times\mathbb{T}^N)$ such that $iLu=f,$ we consider
\[\widehat{u}(t,\xi_n)=\frac{1}{ 1-e^{-2\pi i\mathcal{M}_0(\xi_n)}}\int_{0}^{2\pi}\exp\left(-\int_{t-s}^{t}i\mathcal{M}(r,\xi_n)dr\right)\widehat{f}(t-s,\xi_n)ds.\]

Note that $1-e^{-2\pi i\mathcal{M}_0(\xi_n)}\neq0,$ since $\xi_n\not\in Z_{\mathcal{M}},$ and $\widehat{u}(\cdot,\xi_n)\in C^{\infty}(\mathbb{T}^1)$ (Lemma \ref{lt1}). Moreover, for $t,s\in[0,2\pi]$ such that $t-s \geqslant 0,$ we have

\begin{align*}
&\left|\frac{1}{ 1-e^{-2\pi i\mathcal{M}_0(\xi_n)}}\exp\left(-\int_{t-s}^{t}i\mathcal{M}(r,\xi_n)dr\right)\widehat{f}(t-s,\xi_n)\right| \leqslant \\[2mm]
& \,\, \exp\left(\int_{t-s}^{t}\Im\mathcal{M}(r,\xi_n)dr+\int_{t_n}^{t-s}\Im\mathcal{M}(r,\xi_n)dr\right) =\, \exp\left(\int_{t_n}^{t}\Im\mathcal{M}(r,\xi_n)dr\right),
\end{align*}

\noindent while for $t,s\in[0,2\pi]$ such that $t+s<0,$ we have

\begin{align*}
&\left|\frac{1}{1-e^{-2\pi i\mathcal{M}_0(\xi_n)}}\exp\left(-\int_{t-s}^{t}i\mathcal{M}(r,\xi_n)dr\right)\widehat{f}(t-s,\xi_n)\right|\leqslant \\[2mm]
& \quad \exp\left(\int_{t-s}^{t}\Im\mathcal{M}(r,\xi_n)dr+\int_{t_n}^{t-s+2\pi}\Im\mathcal{M}(r,\xi_n)dr\right) = \\[2mm]
& \quad \exp\left(\int_{t_n}^{t}\Im\mathcal{M}(r,\xi_n)dr+2\pi\Im\mathcal{M}_0(\xi_n)\right).
\end{align*}

Since $\Im\mathcal{M}_0(\xi_n)\rightarrow0,$ by \eqref{est5}, the estimates above imply that
$|\widehat{u}(t,\xi_n)|\leqslant 4\pi,$ for all $t\in\mathbb{T}^1$ and for $n$ sufficiently large.
Hence, $\widehat{u}(\cdot,\xi_n)$ increases slowly and
\begin{equation*}
u(t,x)=\sum_{n=1}^{\infty}\widehat{u}(t,\xi_n) e^{ix\xi_n} \in \mathcal{D}'(\mathbb{T}^1\times\mathbb{T}^N).
\end{equation*}

If $t_0>\sup I,$ then $t_n>\sup I,$ for all $n$ sufficiently large, and \[|\widehat{u}(t_n,\xi_n)|=\int_{t_n-\inf I}^{t_n-\sup I}\phi(t_n-s)ds=\int_{0}^{2\pi}\phi(t)dt>0,\] on the other hand, if $t_0<\sup I,$ then $t_n<\inf I,$ for all $n$ sufficiently large, and \begin{eqnarray*}
|\widehat{u}(t_n,\xi_n)| &= & \left|\int_{t_n-\sup I+2\pi}^{t_n-\inf I+2\pi} \exp\left(-\int_{t_n-s}^{t_n}i\mathcal{M}(r,\xi_n)dr\right)\right. \\[2mm]
 & & \ \times \left. \exp\left(-\int_{t_n}^{t_n-s+2\pi}i\mathcal{M}(r,\xi_n)dr\right) \phi(t_n-s+2\pi)ds\right| \\[2mm]
 &=& e^{2\pi\Im\mathcal{M}_0(\xi_n)}\int_{0}^{2\pi}\phi(s)ds \ > \ (1/2)\int_{0}^{2\pi}\phi(s)ds  \ > \ 0,
\end{eqnarray*}
which implies that $\widehat{u}(\cdot,\xi_n)$ does not decay rapidly.

Hence $u\in\mathcal{D}'(\mathbb{T}^1\times\mathbb{T}^N)\setminus C^{\infty}(\mathbb{T}^1\times\mathbb{T}^N),$ and since $iLu=f$ (by Lemma \ref{lt1}), it follows that $L$ is not globally hypoelliptic.
\end{proof}

\subsection{Sufficient conditions}

We now present sufficient conditions to the global hypoellipticity of $L$.

\begin{theorem}\label{gt1}If the operator $L_0$ given by (\ref{MOCC}) is globally hypoelliptic and the function $\Im\mathcal{M}(t,\xi) = a(t)\beta(\xi) + b(t)\alpha(\xi) $ does not change sign, for sufficiently large $|\xi|$, then the operator $L$  given by \eqref{MO} is globally hypoelliptic.
\end{theorem}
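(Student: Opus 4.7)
The plan is to work in partial Fourier series in $x$. Given $u \in \mathcal{D}'(\mathbb{T}^1 \times \mathbb{T}^N)$ with $Lu = f \in C^\infty(\mathbb{T}^1 \times \mathbb{T}^N)$, Lemma \ref{lt1} already delivers that each $\widehat{u}(\cdot, \xi)$ belongs to $C^\infty(\mathbb{T}^1)$ and satisfies \eqref{diff-eq} with right-hand side $i\widehat{f}(t, \xi)$, whose $t$-sup-norm (together with every $t$-derivative) decays faster than any polynomial in $|\xi|^{-1}$. To conclude $u \in C^\infty(\mathbb{T}^1 \times \mathbb{T}^N)$ it therefore suffices to prove that for every $k, N \in \mathbb{N}$ there is $C_{k,N} > 0$ with
\begin{equation*}
\sup_{t \in \mathbb{T}^1} |\partial_t^k \widehat{u}(t, \xi)| \leqslant C_{k, N} (1 + |\xi|)^{-N}, \ \xi \in \mathbb{Z}^N.
\end{equation*}
By Remark \ref{remarkcc} the set $Z_{\mathcal{M}}$ is finite, so only a smooth, finite-dimensional piece is contributed by the resonant modes $\xi \in Z_{\mathcal{M}}$; the analysis reduces to $\xi \notin Z_{\mathcal{M}}$ with $|\xi|$ large enough that $\Im \mathcal{M}(\cdot, \xi)$ has constant sign on $\mathbb{T}^1$.

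The key idea is to select the representation of $\widehat{u}(t, \xi)$ by the sign: when $\Im \mathcal{M}(\cdot, \xi) \leqslant 0$ I use \eqref{Solu-1}, and when $\Im \mathcal{M}(\cdot, \xi) \geqslant 0$ I use \eqref{Solu-2}. In either case the modulus of the exponential kernel is bounded by $1$. For \eqref{Solu-1}, splitting the integration variable according to whether $t - s \geqslant 0$ or $t - s < 0$ and using $2\pi$-periodicity to rewrite
\begin{equation*}
\int_{t-s}^{t} \Im \mathcal{M}(r, \xi) dr = \int_{t - s + 2\pi}^{2\pi} \Im \mathcal{M}(r, \xi) dr + \int_0^{t} \Im \mathcal{M}(r, \xi) dr
\end{equation*}
in the second case, every resulting integral is taken over a subinterval of $[0, 2\pi]$ of a non-positive function, hence is non-positive; the analogous splitting of $\int_t^{t+s}$ into $\int_t^{2\pi} + \int_0^{t+s-2\pi}$ handles \eqref{Solu-2}.

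Combining this exponential bound with Proposition \ref{lt2}(iii), which gives $|1 - e^{\mp 2\pi i \mathcal{M}_0(\xi)}| \geqslant \widetilde{C} |\xi|^{-\widetilde{M}}$ for $|\xi|$ large, I obtain
\begin{equation*}
\sup_{t \in \mathbb{T}^1} |\widehat{u}(t, \xi)| \leqslant 2\pi \widetilde{C}^{-1} |\xi|^{\widetilde{M}} \sup_{t \in \mathbb{T}^1} |\widehat{f}(t, \xi)|,
\end{equation*}
and the right-hand side decays faster than any polynomial. Rather than differentiating the integral formulas, I propagate the decay to $t$-derivatives through the ODE itself: from $\partial_t \widehat{u}(t, \xi) = i\widehat{f}(t, \xi) - i\mathcal{M}(t, \xi) \widehat{u}(t, \xi)$, and using that $\partial_t^j \mathcal{M}(t, \xi) = (a+ib)^{(j)}(t) p(\xi)$ is bounded by $C_j(1 + |\xi|)^m$ (since $p \in S^m(\mathbb{Z}^N)$), an induction on $k$ expresses $\partial_t^k \widehat{u}(t, \xi)$ as a polynomial in $p(\xi)$ with coefficients made of derivatives of $\widehat{u}$ and $\widehat{f}$, so rapid decay survives the polynomial growth in $|\xi|$ and the required bound follows.

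The main obstacle is the bookkeeping in the second paragraph: for every sign of $\Im \mathcal{M}(\cdot, \xi)$ and every position of the shifted argument relative to $[0, 2\pi]$, one has to verify that the chosen representation really produces an exponential kernel of modulus at most $1$. Without the sign hypothesis a factor of the form $e^{2\pi |\Im \mathcal{M}_0(\xi)|}$ would appear when $t-s$ or $t+s$ leaves $[0, 2\pi]$, and this could a priori overwhelm the polynomial lower bound supplied by Proposition \ref{lt2}(iii)—which is precisely why the sign condition on $\Im \mathcal{M}$ is the pivotal ingredient.
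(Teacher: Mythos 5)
Your proposal is correct and follows essentially the same route as the paper: choose \eqref{Solu-1} or \eqref{Solu-2} according to the sign of $\Im\mathcal{M}(\cdot,\xi)$ so that the exponential kernel has modulus at most $1$, and then invoke Proposition \ref{lt2} to control the factor $(1-e^{\mp 2\pi i\mathcal{M}_0(\xi)})^{-1}$ polynomially. The only (harmless) divergence is in handling $t$-derivatives, where you propagate rapid decay through the ODE $\partial_t\widehat{u}=i\widehat{f}-i\mathcal{M}\widehat{u}$ by induction instead of differentiating the integral formulas, which the paper leaves implicit with ``similar estimates hold for the derivatives.''
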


\begin{proof}
Let $u \in \mathcal{D}'(\mathbb{T}^1\times\mathbb{T}^N)$ be a distribution such that $iLu=f,$ with $f\in C^\infty(\mathbb{T}^1\times\mathbb{T}^N).$ We will show that $u\in C^\infty(\mathbb{T}^1\times\mathbb{T}^N).$

By using partial Fourier series in the variable $x,$ it follows that $iLu=f$ if and only if
\begin{equation}\label{equa2}
\partial_t\widehat{u}(t,\xi)+i\mathcal{M}(t,\xi)\widehat{u}(t,\xi)=\widehat{f}(t,\xi),
\end{equation}
for all $t\in\mathbb{T}^1$ and for all $\xi\in\mathbb{Z}^N.$

Lemma \ref{lt1} implies that $\widehat{u}(\cdot,\xi) \in C^\infty(\mathbb{T}^1),$ for each $\xi\in\mathbb{Z}^N.$ Moreover, since $Z_{{\mathcal{M}}}$ is finite (thanks to Remark \ref{remarkcc}),
for $|\xi|$ sufficiently large the equation \eqref{equa2} has a unique solution, which may be written in the form \eqref{Solu-1} or \eqref{Solu-2}. Since $t\mapsto\Im{\mathcal{M}}(t,\xi)$ does not change sign for $|\xi|$ large enough, we conveniently write
\[
\widehat{u}(t,\xi)=\frac{1}{1-e^{-2\pi i\mathcal{M}_0(\xi)}}\int_{0}^{2\pi}\exp\left(-i\int_{t-s}^{t}\mathcal{M}(r,\xi)dr\right)\widehat{f}(t-s,\xi)ds,
\]
if $\xi$ is such that $\Im\mathcal{M}(t,\xi)\leqslant 0,$ for all $t\in\mathbb{T}^1,$ and
\[
\widehat{u}(t,\xi)= \frac{1}{e^{2\pi i\mathcal{M}_0(\xi)}-1}\int_{0}^{2\pi}\exp\left(i\int_{t}^{t+s}\mathcal{M}(r,\xi)dr\right)\widehat{f}(t+s,\xi)ds,
\]
if $\xi$ is such that $\Im\mathcal{M}(t,\xi)\geqslant 0,$ for all $t\in\mathbb{T}^1.$

Since $L_0$ is globally hypoelliptic, then there exist positive constants $C,$ $M,$ and $R,$ so that \begin{equation}\label{eqqq2}|1-e^{\pm 2\pi i\mathcal{M}_0(\xi)}|\geqslant C|\xi|^{-M},\end{equation} for all $|\xi|\geqslant R$ (Proposition \ref{lt2}).

Hence, for $|\xi|$ sufficiently large, the solution $\widehat{u}(\cdot,\xi)$ of \eqref{equa2} satisfies \[|\widehat{u}(t,\xi)|\leqslant \frac{2\pi}{C}|\xi|^{M}\|\widehat{f}(\cdot,\xi)\|_ {\infty}.\]

Similar estimates holds true for the derivatives $\partial_t^{n}\widehat{u}(t,\xi)$.

Thus, the rapid decaying of the sequence $\widehat{f}(\cdot,\xi)$ implies that $\widehat{u}(\cdot,\xi)$ decays rapidly.

Therefore, $u \in C^\infty(\mathbb{T}^1\times\mathbb{T}^N);$ consequently, $L$ is globally hypoelliptic.

\end{proof}

In the next sections we will see situations where $L$ is globally hypoelliptic, but there exist infinitely many indexes $\xi$ such that $\Im\mathcal{M}(t, \, \xi))$ changes sign. That is, the assumption that $\Im\mathcal{M}(t,\xi)$ does not change sign  is not necessary for the global hypoellipticity of the operator $L.$

\section{Logarithmic growth}\label{sectmlg}

From now on, the speed in which the symbol $p(\xi)$ goes to infinity will play a crucial point in the study of the global hypoellipticity of
\begin{equation*}
L=D_t+(a+ib)(t)P(D_x),\quad(t,x)\in\mathbb{T}^1\times\mathbb{T}^N.
\end{equation*}

We recall that $p(\xi)=\alpha(\xi)+i\beta(\xi),$ where both $\alpha(\xi)$ and $\beta(\xi)$ are real-valued functions in $S^m(\mathbb{Z}^N).$ In particular
\begin{equation}\label{tscest-homo-alphabeta}
|\alpha(\xi)| \leqslant C |\xi|^{m} \ \textrm{ and } \  |\beta(\xi)| \leqslant C |\xi|^{m}, \ \mbox{as }  |\xi| \to \infty.
\end{equation}

In this section our goal is to deal with the case where either $\alpha(\xi)$ or $\beta(\xi)$ has at most logarithmic growth.

\begin{definition}\label{defslg} A function $r:\mathbb{Z}^N\rightarrow \mathbb{C}$ has at most logarithmic growth if
\begin{equation*}
r(\xi) = O(\log(|\xi|)), \ \textrm{ as } \ |\xi| \to \infty,
\end{equation*}
that is, there are positive constants $\kappa$ and $n_0$ such that
\begin{equation}\label{log-estimate-1}
|r(\xi)| \leqslant  \kappa \log(|\xi|), \ \textrm{for all} \ |\xi|\geqslant  n_0.
\end{equation}
When this condition fails, we will say that $r(\xi)$ \textit{has super-logarithmic growth}.
\end{definition}

When either $\alpha(\xi)$ or $\beta(\xi)$ has at most logarithmic growth, the global hypoellipticity of $L$ is completely characterized by the following:

\begin{theorem}\label{gt3} Let $p(\xi)=\alpha(\xi)+i\beta(\xi) \in S^{m}(\mathbb{Z}^N)$ be a symbol.
\begin{itemize}
\item[i)] If $\alpha(\xi) = O(\log(|\xi|))$ and $\beta(\xi) = O(\log(|\xi|)),$ then $L$ is globally hypoelliptic if and only if $L_0$ is globally hypoelliptic.

\item[ii)] If $\alpha(\xi) = O(\log(|\xi|))$ and $\beta(\xi)$ has super-logarithmic growth, then $L$ is globally hypoelliptic if and only if $L_0$ is globally hypoelliptic and $a(t)$ does not change sign.

\item[iii)] If $\alpha(\xi)$ has super-logarithmic growth and $\beta(\xi) = O(\log(|\xi|)),$ then $L$ is globally hypoelliptic if and only if $L_0$ is globally hypoelliptic and $b(t)$ does not change sign.
\end{itemize}
\end{theorem}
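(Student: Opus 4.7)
The plan is to work through partial Fourier series in $x$ as in Lemma \ref{lt1}, reducing the problem to a family of first-order ODEs on $\mathbb{T}^1$ indexed by $\xi \in \mathbb{Z}^N$. The necessity of $L_0$ being globally hypoelliptic in each of (i)--(iii) is immediate from Theorem \ref{ncm2}, so I would focus on the remaining directions. Throughout I would exploit Proposition \ref{lt2}'s equivalent form $|1 - e^{\pm 2\pi i\mathcal{M}_0(\xi)}| \geq C|\xi|^{-M}$ together with the explicit solution formulas \eqref{Solu-1}--\eqref{Solu-2}.

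For part (i) I would carry out a normal form reduction. Set $A(t) = \int_0^t (a(s)-a_0)\,ds$ and $B(t) = \int_0^t(b(s)-b_0)\,ds$ (both smooth and $2\pi$-periodic), and define $C(t) = A(t) + iB(t)$. The conjugation $\widehat{v}(t,\xi) = e^{iC(t)p(\xi)}\widehat{u}(t,\xi)$ converts the equation $iLu = f$ into $iL_0 v = g$ with $\widehat{g}(t,\xi) = e^{iC(t)p(\xi)}\widehat{f}(t,\xi)$. The identity
\[
\bigl|e^{\pm iC(t)p(\xi)}\bigr| = e^{\mp(A(t)\beta(\xi) + B(t)\alpha(\xi))}
\]
together with the logarithmic growth assumption on both $\alpha$ and $\beta$ and the boundedness of $A,B$ shows that $e^{\pm iC(t)p(\xi)}$ and all its $t$-derivatives are polynomially bounded in $\xi$ uniformly in $t$. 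Hence $u\mapsto v$ and $f\mapsto g$ are topological isomorphisms of both $\mathcal{D}'(\mathbb{T}^1\times\mathbb{T}^N)$ and $C^\infty(\mathbb{T}^1\times\mathbb{T}^N)$, which yields the equivalence $L \text{ g.h.} \Leftrightarrow L_0 \text{ g.h.}$

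For the sufficiency in (ii) the previous conjugation no longer preserves these spaces, since $A(t)\beta(\xi)$ may grow super-logarithmically. Instead, assuming $L_0$ globally hypoelliptic and (without loss of generality) $a(t) \geq 0$, I would argue directly from \eqref{Solu-1}--\eqref{Solu-2}, selecting for each $\xi$ with $|\xi|$ large the representation which makes
\[
\int \Im\mathcal{M}(r,\xi)\,dr \;=\; \beta(\xi)\!\int\! a(r)\,dr \;+\; \alpha(\xi)\!\int\! b(r)\,dr
\]
non-positive: take \eqref{Solu-2} when $\beta(\xi) > 0$ and \eqref{Solu-1} when $\beta(\xi) \leq 0$, so that the $\beta$-term is $\leq 0$ (because $a\geq 0$) and the $\alpha$-term is $O(\log|\xi|)$ by hypothesis. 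Combined with Proposition \ref{lt2}'s lower bound on $|1 - e^{\pm 2\pi i\mathcal{M}_0(\xi)}|$ and analogous estimates for $\partial_t^n\widehat{u}(\cdot,\xi)$, one gets polynomial control of $\widehat{u}(\cdot,\xi)$ in terms of $\widehat{f}(\cdot,\xi)$, so rapid decay of $\widehat{f}$ transfers to $\widehat{u}$ and $u \in C^\infty$. Part (iii) is symmetric upon interchanging the roles of $(\alpha,b)$ and $(\beta,a)$.

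The main obstacle is the necessity of the sign condition in (ii) (and symmetrically (iii)). Assuming $a$ changes sign and $\beta$ has super-logarithmic growth, I would construct $u \in \mathcal{D}'\setminus C^\infty$ with $iLu \in C^\infty$ as follows. Choose $\xi_n$ with $|\beta(\xi_n)|/\log|\xi_n|\to\infty$ and $\beta(\xi_n)$ of fixed sign. Using the sign change of $a$, locate $t_n \in \mathbb{T}^1$ and a smooth cutoff $\phi_n$ so that $\int_{t_n}^{t}\Im\mathcal{M}(r,\xi_n)\,dr$ is non-positive on $\operatorname{supp}\phi_n$, dominates $-N\log|\xi_n|$ on $\operatorname{supp}\phi_n'$ for every $N$ (thanks to the super-logarithmic growth of $\beta$), and stays bounded below in absolute value at a test point $t_n^{\ast}\in\operatorname{supp}\phi_n$. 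Setting $\widehat{u}(t,\xi_n) = \phi_n(t)\exp\bigl(-i\int_{t_n}^{t}\mathcal{M}(r,\xi_n)\,dr\bigr)$ and zero on the other modes, the distribution $u$ fails to be smooth (since $|\widehat{u}(t_n^{\ast},\xi_n)|$ does not decay in $n$) while $\widehat{iLu}(t,\xi_n) = \phi_n'(t)\exp\bigl(-i\int_{t_n}^{t}\mathcal{M}\,dr\bigr)$ is rapidly decaying. The delicate point is choosing $(t_n,\phi_n)$ consistently for all large $n$, which is possible precisely because the super-logarithmic growth of $\beta(\xi_n)$ dominates the $O(\log|\xi_n|)$ perturbation coming from $\alpha(\xi_n)$.
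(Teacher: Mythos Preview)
Your proposal is correct. For part (i) and the sufficiency in (ii)--(iii) you follow the paper's strategy closely: the paper splits the conjugation into two steps (operators $\Psi_a$ and $\Psi_b$ reducing $L\to L_{a_0}$ and $L\to L_{b_0}$ separately), and for the sufficiency in (ii) it first invokes that reduction to assume $b\equiv b_0$ before estimating the integral formulas; your single conjugation by $e^{iC(t)p(\xi)}$ and your direct estimate without reducing $b$ are equivalent and slightly more streamlined.

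The necessity argument in (ii) is where you genuinely diverge. The paper builds a specific smooth $f$ and then shows, using the Laplace method for integrals, that the unique solution furnished by \eqref{Solu-1} satisfies $|\widehat{u}(t_0,\xi_n)|\gtrsim \beta(\xi_n)^{-1/2}$, hence does not decay rapidly. Your construction is more elementary: you write $\widehat{u}(\cdot,\xi_n)$ as a cutoff of the homogeneous solution $\exp\bigl(-i\int_{t_0}^t\mathcal{M}\bigr)$, so that $iLu$ is carried by $\phi'$ and lands where the exponential is already super-polynomially small. This bypasses the Laplace method. Two small remarks: the integral $\int_{t_0}^t\Im\mathcal{M}(r,\xi_n)\,dr$ is not literally non-positive on $\operatorname{supp}\phi$ (the $\alpha$-term perturbs it), only $\leq C\log|\xi_n|$, which is what you actually need for $u\in\mathcal{D}'$; and $t_n,\phi_n$ can in fact be chosen independent of $n$ by taking $t_0$ to maximize $\int_0^t a(r)\,dr$ and a fixed $\phi$ equal to $1$ near $t_0$ with $\operatorname{supp}\phi'\subset\{t:\int_{t_0}^t a\leq -\delta\}$. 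Your route is shorter here; the paper's route through \eqref{Solu-1} and Laplace has the advantage that it adapts with almost no change to the super-logarithmic setting of Theorem~\ref{ar1}, where the ``dominant'' function $a+Kb$ is only a limit and the maximizing point genuinely drifts with $n$.
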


In the particular case where $\beta \equiv 0$ we have the following:

\begin{corollary} If the symbol $p(\xi)$ is real-valued, then the operator $L$  is globally hypoelliptic if and only if $L_0$ is globally hypoelliptic and either
\begin{enumerate}
  \item[$i)$] $p(\xi)=O(\log(|\xi|))$; or
  \item[$ii)$] $p(\xi)$ has super-logarithmic growth and $b(t)$ does not change sign.
\end{enumerate}
\end{corollary}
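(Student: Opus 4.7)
The plan is to derive the corollary as a direct consequence of Theorem \ref{gt3} by observing that a real-valued symbol corresponds to the choice $\beta(\xi) \equiv 0$, which is trivially of at most logarithmic growth. With this observation, the statement reduces to two cases dictated by the growth of $\alpha(\xi) = p(\xi)$, and these correspond precisely to items (i) and (iii) of Theorem \ref{gt3}.

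More concretely, I would proceed as follows. Write $p(\xi) = \alpha(\xi) + i\beta(\xi)$; since $p$ is real-valued, $\beta(\xi) \equiv 0$, so in particular $\beta(\xi) = O(\log(|\xi|))$ holds vacuously. Now split the argument into two exhaustive cases according to the growth of $\alpha(\xi)$.

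In the first case, assume $\alpha(\xi) = p(\xi) = O(\log(|\xi|))$. Since both $\alpha$ and $\beta$ have at most logarithmic growth, part (i) of Theorem \ref{gt3} applies and gives that $L$ is globally hypoelliptic if and only if $L_0$ is globally hypoelliptic, which is exactly condition (i) of the corollary. In the second case, assume $\alpha(\xi)$ has super-logarithmic growth. Then $\alpha(\xi)$ satisfies the hypothesis of part (iii) of Theorem \ref{gt3}, while $\beta(\xi) \equiv 0 = O(\log(|\xi|))$ holds trivially, so that theorem yields that $L$ is globally hypoelliptic if and only if $L_0$ is globally hypoelliptic and $b(t)$ does not change sign, which is exactly condition (ii) of the corollary.

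Since there is no genuine obstacle here beyond correctly invoking Theorem \ref{gt3}, there is no hard step: the entire argument amounts to noting that the real-valuedness of $p$ makes the hypothesis on $\beta$ automatic and then matching the two growth regimes of $\alpha = p$ with the corresponding items of the already established theorem. The proof is therefore a one-paragraph case analysis with no additional estimates required.
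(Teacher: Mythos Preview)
Your proposal is correct and matches the paper's own approach: the corollary is stated immediately after Theorem~\ref{gt3} with the introductory line ``In the particular case where $\beta \equiv 0$ we have the following,'' and no further proof is given. Your observation that $\beta \equiv 0$ trivially satisfies $\beta(\xi) = O(\log|\xi|)$, followed by the case split on the growth of $\alpha(\xi) = p(\xi)$ invoking items (i) and (iii) of Theorem~\ref{gt3}, is exactly the intended derivation.
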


\begin{remark}
When $p(\xi)$ is  a real-valued symbol having at most logarithmic growth, item i) shows that the behaviour of the function $b(t)$ plays no role in the global hypoellipticity of pseudo-differential operators of type \eqref{MO}, what means that the famous condition $(P)$ of Nirenberg-Treves, see \cite{NT70} and \cite{NT70-2}, is neither necessary nor sufficient to guarantee global hypoellipticity.

 On the other hand, item ii) is according  to the known result for vector fields $L=D_t+(a+ib)(t)D_x$ on $\mathbb{T}^2$ studied by Hounie in  \cite{HOU79}. We recall that in  this case,  the condition $L_0$ globally hypoelliptic means that either $b_0\neq 0$ or $a_0$ is an irrational non-Liouville number.

\end{remark}

We split the proof of Theorem \ref{gt3} in two subsections. In Subsection \ref{ssectamlg} we prove item $i)$ by using an argument of reduction to normal forms. The proof of items $ii)$ and $iii)$ are treated in Subsection \ref{ssectsuplg}, where the change of sign of the coefficients play an important role.

In Subsection \ref{querover} we show that the techniques developed in previous subsections can be applied to study a particular case where the symbol has super logarithmic growth.

Before proceeding with the proofs, we present two examples which illustrate that the condition $\Im\mathcal{M}(t,\xi)$ does not change sign in Theorem \ref{gt1} is not necessary for the global hypoellipticity of $L.$

\begin{example}\label{exampcc}
If $P(D_x)=(-\Delta_x)^{m/2}$ on $\mathbb{T}^N,$ with $m < 0,$ then by item $i)$ of the Theorem \ref{gt3} the operator
\begin{equation*}
L=D_t+[1+i\sin(t)](-\Delta_x)^{m/2}
\end{equation*}
is globally hypoelliptic since
\begin{equation*}
L_0=D_t+(-\Delta_x)^{m/2},
\end{equation*}
is globally hypoelliptic by Theorem \ref{t-2.1}. Notice that $\Im\mathcal{M}(t,\xi)=\sin(t)|\xi|^{m}$ changes sign for all $|\xi|>0.$
\end{example}

\begin{example}\label{exampcc2}
Assume that $P(D_x)=(-\Delta_x)^{m/2}+i(-\Delta_x)^{n/2}$ on $\mathbb{T}^N,$ where $m\leqslant 0$ and $n>0.$ Theorem \ref{gt3} item $iii)$ implies that
the operator
\begin{equation*}
L=D_t+[1+\cos(t)-i][(-\Delta_x)^{m/2}+i(-\Delta_x)^{n/2}]
\end{equation*}
is globally hypoelliptic, since $a(t)=1+\cos(t)\geqslant 0$ and
\begin{equation*}
L_0=D_t+(1-i)[(-\Delta_x)^{m/2}+i(-\Delta_x)^{n/2}]
\end{equation*}
is globally hypoelliptic. Indeed, the assumptions $m\leqslant 0$ and $n>0$ implies that, for $(\tau,\xi)\in\mathbb{Z}\times\mathbb{Z}_\ast^N$ such that $|\xi|>2^{1/n},$ we have
\begin{equation*}
|\tau+(1-i)(|\xi|^m+i|\xi|^n)|\geqslant||\xi|^n-|\xi|^m|\geqslant 1.
\end{equation*}

Hence, $L_0$ is globally hypoelliptic by Section \ref{sectcc}.

Notice that $\Im\mathcal{M}(t,\xi)=(1+\cos(t))|\xi|^n-|\xi|^m$ changes sign for infinitely many indexes, since $m\leqslant 0$ and $n>0.$
\end{example}

\subsection{Reduction to normal form}\label{ssectamlg}

\medskip

In this subsection we show that, under the assumption of growth at most logarithm
of the symbol, the study of the global hypoellipticity of $L$ and $L_0$ are equivalent.

In this situation we have
\begin{equation*}
\alpha(\xi)=O(\log(|\xi|)) \ \textrm{ and } \ \beta(\xi)=O(\log(|\xi|)), \mbox{ as } |\xi| \to \infty,
\end{equation*}
and the proof of item $i)$ of Theorem \ref{gt3} follows from Corollary \ref{reduct2-cor} bellow.

We introduce the following (formal) operators: for each distribution $u \in \mathcal{D}'(\mathbb{T}^1\times\mathbb{T}^N),$ we set
\begin{equation*}
\Psi_a(u) = \sum_{\xi \in \mathbb{Z}^N} e^{-i (A(t) - a_0 t) p(\xi)}\widehat{u}(t, \xi)e^{i x \xi},
\end{equation*}
and
\begin{equation*}
\Psi_b(u) = \sum_{\xi \in \mathbb{Z}^N} e^{(B(t) - b_0 t) p(\xi)}\widehat{u}(t, \xi)e^{i x \xi},
\end{equation*}
where
\begin{equation*}
A(t) = \int_{0}^{t}a(s) ds \ \textrm{ and } \ B(t) = \int_{0}^{t}b(s) ds.
\end{equation*}

\begin{proposition}\label{reduct2}If $\beta(\xi)=O(\log(|\xi|)),$ then  $\Psi_a$  is an isomorphism which satisfies \begin{equation}\label{Psi-a-1}
\Psi_a^{-1} \circ L \circ \Psi_a = L_{a_0}\end{equation} on both the spaces $\mathcal{D}'(\mathbb{T}^1\times\mathbb{T}^N)$ and $C^{\infty}(\mathbb{T}^1\times\mathbb{T}^N),$ where
\begin{equation*}
L_{a_0} \doteq D_t + (a_0 + i b(t)) P(D_x).
\end{equation*}

Analogously, if $\alpha(\xi)=O(\log(|\xi|)),$ then $\Psi_b$  is an isomorphism which satisfies \begin{equation}\label{Psi-b-1}
\Psi_b^{-1} \circ L \circ \Psi_b = L_{b_0}\end{equation} on both the spaces $\mathcal{D}'(\mathbb{T}^1\times\mathbb{T}^N)$ and $C^{\infty}(\mathbb{T}^1\times\mathbb{T}^N),$ where
\begin{equation*}
L_{b_0} \doteq D_t + (a(t) + i b_0)P(D_x).
\end{equation*}
\end{proposition}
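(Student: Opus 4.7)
The plan is to prove the statement for $\Psi_a$ in detail; the argument for $\Psi_b$ is entirely analogous after interchanging the roles of $\alpha$ and $\beta$. I will view $\Psi_a$ on the Fourier side as multiplication, mode by mode, by $m_a(t,\xi) = e^{-i(A(t) - a_0 t) p(\xi)}$, and the strategy is to reduce everything to showing that $m_a$, its reciprocal, and their $t$-derivatives are polynomially bounded in $\xi$, uniformly in $t$. Once this is in hand, the characterizations of $C^\infty(\mathbb{T}^1\times\mathbb{T}^N)$ via rapid decay in $\xi$ of $\widehat{u}(\cdot,\xi)$ in $C^\infty(\mathbb{T}^1)$, and of $\mathcal{D}'(\mathbb{T}^1\times\mathbb{T}^N)$ via polynomial growth of the partial Fourier coefficients, immediately give that $\Psi_a$ is a continuous self-bijection of each space, with inverse the multiplication by $m_a^{-1}$.

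For the estimate I will use that $A(t)-a_0 t$ is smooth and $2\pi$-periodic (since $A(2\pi)=2\pi a_0$), hence bounded on $\mathbb{T}^1$ with all its derivatives. Writing $p=\alpha+i\beta$, the modulus of the multiplier is
\begin{equation*}
|m_a(t,\xi)| = e^{(A(t) - a_0 t)\beta(\xi)} \leqslant e^{\|A - a_0 t\|_{L^\infty}\,|\beta(\xi)|},
\end{equation*}
so the hypothesis $\beta(\xi)=O(\log|\xi|)$ yields $|m_a(t,\xi)|\leqslant C(1+|\xi|)^K$ for some $K$; the same bound holds for $m_a^{-1}$. For $t$-derivatives I will check inductively that $\partial_t^n m_a(t,\xi)$ is a smooth function of $t$ times a polynomial in $p(\xi)$ of degree $n$, and since $p\in S^m(\mathbb{Z}^N)$ one has $|p(\xi)|\leqslant C|\xi|^m$, so each derivative stays polynomially bounded in $\xi$, uniformly in $t$, which is the quantitative control needed for the mapping properties above.

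The conjugation identity is then a direct computation in the Fourier variable. Applying $iL = \partial_t + i(a+ib)(t)p(\xi)$ to $(\Psi_a u)^{\wedge}(t,\xi) = m_a(t,\xi)\widehat{u}(t,\xi)$ and using $\partial_t m_a = -i(a(t)-a_0)p(\xi)\,m_a$, one obtains
\begin{equation*}
(iL)\bigl(m_a(t,\xi)\widehat{u}(t,\xi)\bigr) = m_a(t,\xi)\bigl[\partial_t \widehat{u}(t,\xi) + i(a_0 + ib(t))p(\xi)\widehat{u}(t,\xi)\bigr],
\end{equation*}
whose right-hand side is precisely the $\xi$-th Fourier coefficient of $\Psi_a\bigl((iL_{a_0})u\bigr)$; dividing by $i$ gives \eqref{Psi-a-1}. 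The analogous derivation for $\Psi_b$ exploits $\alpha(\xi)=O(\log|\xi|)$ to bound $|e^{(B(t)-b_0t)p(\xi)}| = e^{(B(t)-b_0t)\alpha(\xi)}$, together with $\partial_t\bigl[e^{(B(t)-b_0t)p(\xi)}\bigr] = (b(t)-b_0)p(\xi)\,e^{(B(t)-b_0t)p(\xi)}$ and the identity $(b(t)-b_0) + i(a(t)+ib(t)) = i(a(t)+ib_0)$, producing $L\circ \Psi_b = \Psi_b\circ L_{b_0}$. The hard part is really just the uniform polynomial bound on the multiplier: if $\beta(\xi)$ (respectively $\alpha(\xi)$) grew faster than logarithmically, the exponent $(A-a_0 t)\beta(\xi)$ (respectively $(B-b_0t)\alpha(\xi)$) would overwhelm polynomial control and the conjugating operator would fail to preserve $\mathcal{D}'$, so the logarithmic hypothesis is sharp for this normal-form reduction.
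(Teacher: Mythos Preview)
Your proof is correct and follows essentially the same route as the paper: reduce the mapping properties of $\Psi_a$ (and $\Psi_b$) to polynomial-in-$\xi$ bounds on the multiplier $m_a(t,\xi)=e^{-i(A(t)-a_0t)p(\xi)}$ and its $t$-derivatives, using the logarithmic hypothesis on $\beta$ (respectively $\alpha$) to control $|m_a|$ and the symbol estimate $|p(\xi)|\leqslant C|\xi|^m$ for the derivatives; the paper packages the derivative bound as a separate lemma, while you sketch it inline and additionally spell out the conjugation identity that the paper merely declares ``easily verified.'' One small wording issue: $\partial_t^n m_a$ is not literally ``a smooth function of $t$ times a polynomial in $p(\xi)$'' but rather $m_a(t,\xi)$ times such a polynomial (as in the paper's Lemma~\ref{prop-exp-ine}); since you have already shown $m_a$ itself is polynomially bounded this does not affect the argument.
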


The proof of this proposition consists in to show that $\Psi_a$ and $\Psi_b$  are well defined operators, in this case they are evidently linear operators with inverse
\begin{equation*}
\Psi_a^{-1}(v) = \sum_{\xi \in \mathbb{Z}^N} e^{i (A(t) - a_0 t) p(\xi)}\widehat{v}(t, \xi)e^{i x \xi},
\end{equation*}
and
\begin{equation*}
\Psi_b^{-1}(v) = \sum_{\xi \in \mathbb{Z}^N} e^{-(B(t) - b_0 t) p(\xi)}\widehat{v}(t, \xi)e^{i x \xi},
\end{equation*}
respectively, on both the spaces $\mathcal{D}'(\mathbb{T}^1\times\mathbb{T}^N)$ and $C^{\infty}(\mathbb{T}^1\times\mathbb{T}^N).$ Moreover,
identities \eqref{Psi-a-1} and \eqref{Psi-b-1} are easily verified.

Before starting this proof, let us state the reduction to the normal form:

\begin{corollary}\label{reduct2-cor}
If $\beta(\xi)=O(\log(|\xi|))$ (respectively $\alpha(\xi)=O(\log(|\xi|))$), then $L$ is globally hypoelliptic if and only if $L_{a_0}$ (respectively $L_{b_0}$) is globally hypoelliptic.
\end{corollary}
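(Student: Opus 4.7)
The plan is to deduce the corollary as an immediate formal consequence of Proposition \ref{reduct2}. The key point is that $\Psi_a$ (resp.\ $\Psi_b$) is a linear isomorphism simultaneously on $\mathcal{D}'(\mathbb{T}^1\times\mathbb{T}^N)$ and on $C^\infty(\mathbb{T}^1\times\mathbb{T}^N)$, intertwining $L$ with $L_{a_0}$ (resp.\ $L_{b_0}$). Once that is granted, the statement of global hypoellipticity is transported along the conjugation without any further analytic work.

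I would argue as follows in the case $\beta(\xi)=O(\log|\xi|)$; the other case is obtained by replacing $\Psi_a$ by $\Psi_b$. First, suppose $L$ is globally hypoelliptic, and take $v\in\mathcal{D}'(\mathbb{T}^1\times\mathbb{T}^N)$ with $L_{a_0}v\in C^\infty(\mathbb{T}^1\times\mathbb{T}^N)$. Since $\Psi_a$ is an isomorphism on $\mathcal{D}'$, the distribution $u\doteq \Psi_a v$ is well defined, and by \eqref{Psi-a-1}
\[
Lu = L(\Psi_a v) = \Psi_a(L_{a_0}v)\in C^\infty(\mathbb{T}^1\times\mathbb{T}^N),
\]
because $\Psi_a$ also maps $C^\infty$ into itself. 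Global hypoellipticity of $L$ then yields $u\in C^\infty(\mathbb{T}^1\times\mathbb{T}^N)$, and applying $\Psi_a^{-1}$ (which preserves $C^\infty$) gives $v=\Psi_a^{-1}u\in C^\infty(\mathbb{T}^1\times\mathbb{T}^N)$. Hence $L_{a_0}$ is globally hypoelliptic.

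Conversely, assume $L_{a_0}$ is globally hypoelliptic, and let $u\in\mathcal{D}'(\mathbb{T}^1\times\mathbb{T}^N)$ satisfy $Lu\in C^\infty(\mathbb{T}^1\times\mathbb{T}^N)$. Setting $v\doteq \Psi_a^{-1}u\in\mathcal{D}'(\mathbb{T}^1\times\mathbb{T}^N)$, relation \eqref{Psi-a-1} gives
\[
L_{a_0}v = \Psi_a^{-1}(Lu)\in C^\infty(\mathbb{T}^1\times\mathbb{T}^N),
\]
so by global hypoellipticity of $L_{a_0}$ we get $v\in C^\infty$, and therefore $u=\Psi_a v\in C^\infty(\mathbb{T}^1\times\mathbb{T}^N)$. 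This proves the equivalence.

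The only non-trivial input is Proposition \ref{reduct2}; the corollary itself is a purely formal transport-of-structure argument. Consequently, there is no real obstacle beyond invoking the proposition: the logarithmic growth hypothesis is used implicitly there to guarantee that the multipliers $e^{\pm i(A(t)-a_0t)p(\xi)}$ (resp.\ $e^{\pm(B(t)-b_0t)p(\xi)}$) grow at most polynomially in $|\xi|$, so that $\Psi_a^{\pm 1}$ (resp.\ $\Psi_b^{\pm 1}$) indeed acts on distributions and preserves smoothness.
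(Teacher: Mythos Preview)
Your argument is correct and follows essentially the same approach as the paper: both use the conjugation identity \eqref{Psi-a-1} from Proposition \ref{reduct2} together with the fact that $\Psi_a$ and $\Psi_a^{-1}$ preserve both $\mathcal{D}'$ and $C^\infty$ to transport global hypoellipticity back and forth. The only cosmetic difference is that you spell out both directions, whereas the paper writes one direction and declares the converse similar.
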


\begin{proof}
The validity of the identity $L_{a_0}=\Psi_a^{-1}\circ L \circ\Psi_a$ on both $\mathcal{D}'(\mathbb{T}^1\times\mathbb{T}^N)$ and $C^{\infty}(\mathbb{T}^1\times\mathbb{T}^N)$ imply that $L$ is globally hypoelliptic if and only if $L_{a_0}$ is globally hypoelliptic.

In fact, assume that $L$ is globally hypoelliptic and let $u \in \mathcal{D}'(\mathbb{T}^1\times\mathbb{T}^N)$ such that $L_{a_0} u = f \in C^{\infty}(\mathbb{T}^1\times\mathbb{T}^N)$. Since $v = \Psi_{a}(u)\in \mathcal{D}'(\mathbb{T}^1\times\mathbb{T}^N) $ satisfy $Lv= \Psi_{a}(f)\in C^{\infty}(\mathbb{T}^1\times\mathbb{T}^N),$ it follows that $v\in C^{\infty}(\mathbb{T}^1\times\mathbb{T}^N),$ since $L$ is globally hypoelliptic.

Hence, $u=\Psi_{a}^{-1}(v)\in C^{\infty}(\mathbb{T}^1\times\mathbb{T}^N),$ which implies that $L_{a_0}$ is globally hypoelliptic. The converse is similar.

Analogously, the validity of the identity $L_{b_0}=\Psi_b^{-1}\circ L \circ\Psi_b$ on both $\mathcal{D}'(\mathbb{T}^1\times\mathbb{T}^N)$ and $C^{\infty}(\mathbb{T}^1\times\mathbb{T}^N)$ will imply that $L$ is globally hypoelliptic if and only if $L_{b_0}$ is globally hypoelliptic.

\end{proof}

The following estimates will be useful in the proof of Proposition \ref{reduct2}.

\begin{lemma}\label{prop-exp-ine}
Consider $p\in S^{m}(\mathbb{Z}^N).$ Given $k \in \mathbb{N}_0,$ there are positive constants $C$  and $n_0$ such that
\begin{equation*}
|\partial_t^k (e^{-i (A(t) - a_0 t) p(\xi)})| \leqslant  C |\xi|^{k m}e^{\beta(\xi)(-A(t) + a_0 t)},
\end{equation*}
and
\begin{equation*}
|\partial_t^k (e^{(B(t) - b_0 t) p(\xi)})| \leqslant  C |\xi|^{k m} e^{\alpha(\xi)(B(t) - b_0 t)},
\end{equation*}
for each $|\xi|\geqslant  n_0$.
\end{lemma}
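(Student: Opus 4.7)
The plan is to prove both estimates by an induction on $k$ (equivalently, by Faà di Bruno's formula). Since the two cases are entirely parallel, I sketch only the first; the second follows by replacing $-i(A(t)-a_0 t)$ with $(B(t)-b_0 t)$ throughout.

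Set $F(t,\xi) := -i(A(t)-a_0 t)p(\xi)$. Writing $p(\xi)=\alpha(\xi)+i\beta(\xi)$ and separating real and imaginary parts,
$$F(t,\xi) = (A(t)-a_0 t)\beta(\xi) - i(A(t)-a_0 t)\alpha(\xi),$$
so $|e^{F(t,\xi)}|$ coincides (up to a rearrangement of the sign in the exponent) with the exponential factor on the right-hand side of the claim. Differentiating, one has $\partial_t F = -i(a(t)-a_0)p(\xi)$ and, for $j\geq 2$, $\partial_t^{j} F = -i\,a^{(j-1)}(t)p(\xi)$. Since $a\in C^{\infty}(\mathbb{T}^1)$ and $\mathbb{T}^1$ is compact, every derivative $a^{(j)}$ is uniformly bounded; combined with the symbol estimate $|p(\xi)|\leq C|\xi|^{m}$, valid for $|\xi|\geq n_0$, this yields
$$|\partial_t^{j} F(t,\xi)| \leq C_j |\xi|^{m}, \qquad j\geq 1, \ |\xi|\geq n_0.$$

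Faà di Bruno's formula then produces
$$\partial_t^{k}(e^{F}) = e^{F}\sum_{\mu} c_{\mu}\prod_{j=1}^{k}(\partial_t^{j} F)^{\mu_j},$$
where the sum ranges over tuples $\mu=(\mu_1,\ldots,\mu_k)\in\mathbb{N}_0^{k}$ with $\sum_{j=1}^{k}j\mu_j=k$, and $c_\mu$ are universal combinatorial constants (the coefficients of the $k$-th complete Bell polynomial). Each monomial is bounded by a constant times $|\xi|^{m\sum_j \mu_j}$; because admissible tuples satisfy $1\leq \sum_j\mu_j \leq k$, enlarging $n_0$ if necessary one bounds each term by a constant times $|\xi|^{km}$ (the extremal exponent, achieved when $\mu_1=k$). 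Multiplying by $|e^{F}|$ gives the first inequality. The second follows verbatim by setting $G(t,\xi):=(B(t)-b_0 t)p(\xi)$, whose real part equals $(B(t)-b_0 t)\alpha(\xi)$, matching the exponential factor on the right; the combinatorial step is identical.

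No substantive obstacle is anticipated: the argument is routine bookkeeping in Faà di Bruno's formula combined with the symbol estimate $|p(\xi)|\leq C|\xi|^{m}$ and the smoothness of $a$ and $b$ on the compact torus $\mathbb{T}^1$.
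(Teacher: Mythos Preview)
Your proof is correct and follows essentially the same approach as the paper's: both reduce to the symbol bound $|p(\xi)|\leq C|\xi|^m$ combined with the uniform boundedness of the derivatives of $a$ and $b$ on the compact torus. The only cosmetic difference is that the paper carries out the chain-rule computation by induction on $k$ via the Leibniz rule, whereas you invoke Fa\`a di Bruno's formula directly; the underlying combinatorics are identical.
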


\begin{proof} For $k=0$ these estimates are evident. If the first estimate holds for $\ell \in \{0, 1, \ldots, k\},$ then we have:

\begin{align*}
|\partial_t^{k+1} (e^{-i (A(t) - a_0 t) p(\xi)})| & \leqslant  | p(\xi)| \sum_{\ell = 0}^{k}\binom{k}{\ell}|\partial_t^{\ell}e^{-i (A(t) - a_0 t) p(\xi)}|\\
                                                  & \quad \times \sup_{t \in \mathbb{T}^1}|\partial_t^{k-\ell}(a(t) - a_0 t)| \\
                                                  & \leqslant  C | p(\xi)| \sum_{\ell = 0}^{k}\binom{k}{\ell}|\partial_t^{\ell}e^{-i (A(t) - a_0 t) p(\xi)}| \\
                                                  & \leqslant  C | p(\xi)| e^{\beta(\xi)(-A(t) + a_0 t)} \sum_{\ell = 0}^{k}\binom{k}{\ell} |\xi|^{\ell m} \\
                                                  & \leqslant          C |\xi|^{(k+1) m} e^{\beta(\xi)(-A(t) + a_0 t)},
\end{align*}
where we are using $|p(\xi)| \leqslant  C |\xi|^m$, as  $|\xi| \to \infty$.

The second estimate can be obtained by using similar arguments.

\end{proof}

\begin{proof}[{Proof of Proposition \ref{reduct2}}]
We have to verify only that $\Psi_a$ and $\Psi_b$ are well defined linear operators on both $\mathcal{D}'(\mathbb{T}^1\times\mathbb{T}^N)$ and $C^{\infty}(\mathbb{T}^1\times\mathbb{T}^N)$ whenever
\begin{equation*}
\beta(\xi)=O(\log(|\xi|)) \ \textrm{ and } \ \alpha(\xi)=O(\log(|\xi|)),
\end{equation*}
respectively.

Fixed $u \in \mathcal{D}'(\mathbb{T}^1\times\mathbb{T}^N),$ we must study the behavior of the Fourier coefficients
\begin{equation*}
\psi_a(t, \xi) = e^{-i (A(t) - a_0 t) p(\xi)}\widehat{u}(t, \xi), \ \textrm{for all} \ \xi \in \mathbb{Z}^N,
\end{equation*}
and
\begin{equation*}
\psi_b(t, \xi) = e^{(B(t) - b_0 t) p(\xi)}\widehat{u}(t, \xi), \ \textrm{for all} \ \xi \in \mathbb{Z}^N.
\end{equation*}

Given $u \in \mathcal{D}'(\mathbb{T}^1\times\mathbb{T}^N),$ it follows by Lemma \ref{prop-exp-ine} the existence of positive constants $C$ and $M$ such that
\begin{equation}\label{psi-a-1}
|\langle\psi_a(t,\xi),\phi\rangle| \leqslant  C |\xi|^{M}\|\phi\|_{M}\sup_{t\in\mathbb{T}^1}|e^{\beta(\xi)(-A(t) + a_0 t)}|,
\end{equation}
and
\begin{equation}\label{psi-b-2}
|\langle\psi_b(t,\xi),\phi\rangle| \leqslant  C |\xi|^{M}\|\phi\|_{M}\sup_{t\in\mathbb{T}^1}|e^{\alpha(\xi)(B(t) - b_0 t)}|,
\end{equation}
for $|\xi|$ large enough, where
\begin{equation*}
\|\phi\|_{M} \doteq \max\{|\partial^{\alpha}  \phi(t)|; \ \alpha \leqslant M, \ t\in\mathbb{T}^1\}.
\end{equation*}

If $\beta(\xi)=O(\log(|\xi|)),$ by \eqref{log-estimate-1} there exist $\kappa>0$ and $n_0 \in \mathbb{N}$ such that
\begin{equation}\label{log-estimate-3}
|\beta(\xi)| \leqslant  \log(|\xi|^{\kappa}), \ \textrm{for all} \ |\xi|\geqslant  n_0.
\end{equation}

Now, take $\delta_1<0$ and $\delta_2>0$ satisfying
\begin{equation}\label{log-estimate-3*}
\delta_1 \leqslant  -A(t) + a_0 t \leqslant  \delta_2, \ \textrm{for all} \ t \in \mathbb{T}^1.
\end{equation}

The inequalities \eqref{log-estimate-3} and \eqref{log-estimate-3*} imply that, for all $|\xi|\geqslant  n_0$ we have:
\begin{equation*}
\beta(\xi) (-A(t) + a_0 t)) \leqslant   \left \{
\begin{array}{l}
\log(|\xi|^{\kappa\delta_2}), \ \textrm{ if } \  \beta(\xi) >0, \\[3mm]
\log(|\xi|^{-\kappa\delta_1}), \ \textrm{ if } \  \beta(\xi) <0.
\end{array}\right.
\end{equation*}
Hence,
\begin{equation}\label{ne1}
e^{\beta(\xi)(-A(t) + a_0 t)} \leqslant  |\xi|^{\delta_3}, \ \textrm{ as } \ |\xi| \to \infty,
\end{equation}
where $\delta_3 = \max\{\kappa\delta_2 , -\kappa\delta_1\}$.

With similar ideas, by using the fact that $\alpha(\xi)=O(\log(|\xi|)),$ we obtain $\delta_4>0$ such that
\begin{equation}\label{ne2}
e^{\alpha(\xi)(B(t) - b_0 t)} \leqslant  |\xi|^{\delta_4}, \ \textrm{ as } \ |\xi| \to \infty.
\end{equation}

Then, by \eqref{psi-a-1}, \eqref{psi-b-2} and the last two inequalities
\begin{equation*}
|\langle\psi_a(t,\xi),\phi\rangle| \leqslant  C |\xi|^{M +\delta_3}\|\phi\|_{M}
\end{equation*}
and
\begin{equation*}
|\langle\psi_b(t,\xi),\phi\rangle| \leqslant  C |\xi|^{M + \delta_4}\|\phi\|_{M},
\end{equation*}
for all $|\xi|$ sufficiently large and for all $\phi\in C^{\infty}(\mathbb{T}^1);$ thus $\Psi_a \cdot u \in \mathcal{D}'(\mathbb{T}^1\times\mathbb{T}^N)$ and $\Psi_b \cdot u \in \mathcal{D}'(\mathbb{T}^1\times\mathbb{T}^N).$

Finally, if $u \in C^{\infty}(\mathbb{T}^1\times\mathbb{T}^N),$ then Lemma \ref{prop-exp-ine} and the rapid decaying of $\widehat{u}(\cdot,\xi)$ imply that for each $k \in \mathbb{N}_0$ we obtain $C_k>0$ and $M_k \in \mathbb{R}$ such that
\begin{equation*}
|\partial_t^k \psi_a(t, \xi)| \leqslant  C_k |\xi|^{M_k}e^{\beta(\xi)(-A(t) + a_0 t)}\sum_{j=0}^{k}|\partial_t^{k}\widehat{u}(t,\xi)|
\end{equation*}
and
\begin{equation*}
|\partial_t^k \psi_b(t, \xi)| \leqslant  C_k |\xi|^{M_k} e^{\alpha(\xi)(B(t) - b_0 t)}\sum_{j=0}^{k}|\partial_t^{k}\widehat{u}(t,\xi)|,
\end{equation*}
for $|\xi|$ large enough.

By using again \eqref{ne1} and \eqref{ne2}, and from the rapid decaying of $\widehat{u}(\cdot,\xi),$ it follows that $\Psi_a(u)$ and $\Psi_b(u)$ are in $C^{\infty}(\mathbb{T}^1 \times \mathbb{T}^N)$, what finishes the proof of Proposition \ref{reduct2}.

\hfill $\square$
\end{proof}

\subsection{Change of sign}\label{ssectsuplg}

Our focus now is to prove item $ii)$ of Theorem \ref{gt3}, in which $\alpha(\xi)$ has at most logarithmic growth and $\beta(\xi)$ has super-logarithmic growth. Notice that, in this case, the global hypoellipticity of $L$ cannot be reduced to the global hypoellipticity of a constant coefficient operator.

The proof of item $iii)$ of Theorem \ref{gt3} consists in slight modifications of the techniques used in the proof of item $ii).$ Since the argument is quite similar, it will be omitted.

\begin{proof}[Proof of item $ii)$ of Theorem \ref{gt3}]
We recall that the hypothesis in this case are $\beta(\xi)$ has super-logarithmic growth and $\alpha(\xi)=O(\log(|\xi|))$, hence, in view of Corollary \ref{reduct2-cor} we may assume that $b(t)$ is constant, $b\equiv b_0.$

\bigskip
\textit{Sufficiency:}

Assume that $a(t)$ does not change sign and that $L_0$ is globally hypoelliptic.

Let $u\in\mathcal{D}' (\mathbb{T}^1\times\mathbb{T}^N)$ be such that $iLu=f\in C^{\infty}(\mathbb{T}^1\times\mathbb{T}^N).$ We will show that $u \in C^{\infty}(\mathbb{T}^1\times\mathbb{T}^N).$ By the Fourier series in the variable $x,$ we are led to the equations
\begin{align}\label{equa3}
&\widehat{f}(t,\xi)= \, \partial_t\widehat{u}(t,\xi)+i\mathcal{M}(t,\xi)\widehat{u}(t,\xi)\\ \nonumber
&=  \, \partial_t\widehat{u}(t,\xi)+\!\big[\!-(b_0\alpha(\xi)+a(t)\beta(\xi))+i(a(t)\alpha(\xi)-b_0\beta(\xi))\big]\widehat{u}(t,\xi),
\end{align}
for all $t\in\mathbb{T}^1$ and for all $\xi\in\mathbb{Z}^N.$

By applying Lemma \ref{lt1} to equation \eqref{equa3}, it follows that $t \in \mathbb{T}^1 \mapsto \widehat{u}(t,\xi)$ is smooth, for each $\xi\in\mathbb{Z}^N.$ Since $Z_{{\mathcal{M}}}$ is finite (Remark \ref{remarkcc}), for $|\xi|$ sufficiently large, equation \eqref{equa3} has a unique solution. This solution can be written by
\[\widehat{u}(t,\xi)=\frac{1}{1-e^{-2\pi i\mathcal{M}_0(\xi)}}\int_{0}^{2\pi} \exp\left(-\int_{t-s}^{t} i\mathcal{M}(r,\xi)dr\right) \widehat{f}(t-s,\xi)ds,\]
if $\xi$ is such that $a(t)\beta(\xi)\leqslant 0,$ for all $t\in\mathbb{T}^1,$ and
\[\widehat{u}(t,\xi)=\frac{1}{e^{2\pi i\mathcal{M}_0(\xi)}-1}\int_{0}^{2\pi}\exp\left(\int_{t}^{t+s} i\mathcal{M}(r,\xi)dr\right) \widehat{f}(t+s,\xi)ds,\]
if $\xi$ is such that $a(t)\beta(\xi)\geqslant 0,$ for all $t\in\mathbb{T}^1.$

Since $\alpha(\xi)=O(\log(|\xi|)),$ there exists $K>0$ such that
\[e^{\alpha(\xi)sb_0}\leqslant |\xi|^{K|b_{0}|},\]
for $|\xi|$ sufficiently large and $s\in[0,2\pi].$ Thus, for $|\xi|$ large enough and such that $a(t)\beta(\xi)\leqslant 0,$ for all $t\in\mathbb{T}^1,$ we have
\begin{align*}
\left|\exp\left(-\int_{t-s}^{t}i\mathcal{M}(r,\xi)dr\right)\right| & = \ \exp\left( \alpha(\xi)sb_0+\int_{t-s}^{t}a(r)\beta(\xi)dr \right) \\[2mm]
 & \ \leqslant e^{\alpha(\xi)sb_0}\leqslant |\xi|^{K|b_{0}|}.
\end{align*}

Similarly, for $|\xi|$ large enough and such that $a(t)\beta(\xi)\geqslant 0,$ for all $t\in\mathbb{T}^1,$ we have
\begin{align*}
\left|\exp\left(\int_{t}^{t+s}i\mathcal{M}(r,\xi)dr \right)\right| & = \ \exp\left( -\alpha(\xi)sb_0-\int_{t}^{t+s}a(r)\beta(\xi)dr \right) \\[2mm]
 & \ \leqslant e^{-\alpha(\xi)sb_0}\leqslant |\xi|^{K|b_{0}|}.
\end{align*}

Finally, as in the proof of Theorem \ref{gt1}, the global hypoellipticity of $L_0$ give us a control as in \eqref{eqqq2}, and the rapid decaying of $\widehat{f}(\cdot,\xi)$ imply that $\widehat{u}(\cdot,\xi)$ decays rapidly. Hence, $u$ belongs to $C^{\infty}(\mathbb{T}^1\times\mathbb{T}^N)$ and $L$ is globally hypoelliptic.

\bigskip

\textit{Necessity:}

By Theorem \ref{ncm2}, it is enough to prove that the changing of sign of $a(t)$ implies that $L$ is not globally hypoelliptic.

We will exhibit a smooth function
$$f(t,x)=\sum_{n=1}^{\infty}\widehat{f}(t,\xi_n)e^{ix\xi_n},$$
for which $iLu=f$ has a solution in  $\mathcal{D}'(\mathbb{T}^1\times\mathbb{T}^N)\setminus C^{\infty}(\mathbb{T}^1\times\mathbb{T}^N).$

Our assumptions on $\beta(\xi)$ imply that we may choose a sequence $\{\xi_n\},$ such that $|\xi_n|$ is strictly increasing, $|\xi_n|\geqslant n,$ and $|\beta(\xi_n)|\geqslant \log(|\xi_n|^n),$ for all $n\in\mathbb{N}.$

By passing to a subsequence we may assume that
either  $\beta(\xi_n)>0,$ for all $n,$ or $\beta(\xi_n)<0,$ for all $n.$

Without loss of generality, we also may assume that
$$b_0\alpha(\xi_n)+a_0\beta(\xi_n)\leqslant 0,$$
for all $n\in\mathbb{N}.$ Indeed, in the other case, it is enough to consider $-L$ and to change the variable $t$ by $-t.$

Suppose we are in the case  $\beta(\xi_n)>0$ for all $n$ (the other case is similar).

Set
$$M_a=\displaystyle\max_{0\leqslant t,s\leqslant 2\pi}\int_{t-s}^{t}a(r)dr=\int_{t_0-s_0}^{t_0}a(r)dr.$$

Since $a(t)$ changes sign, $M_a>0$ and $s_0\in(0,2\pi);$ moreover, without loss of generality (by performing a translation in the variable $t$) we may assume that $t_0$ and $\sigma_0\doteq t_0-s_0$ belong to the open interval $(0,2\pi).$

Let $\phi\in C^{\infty}_ {c}((\sigma_0-\epsilon,\sigma_0+\epsilon))$ be a function such that $0\leqslant \phi(t)\leqslant 1,$ and $\phi(t)=1$ in a neighborhood of $[\sigma_0-\epsilon/2,\sigma_0+\epsilon/2].$

We then define $\widehat{f}(\cdot,\xi_n)$ by the $2\pi-$periodic extension of the function
\[(1-e^{-2\pi i\mathcal{M}_0(\xi_n)})\phi(t)\exp\left(i\int_{t}^{t_0}\Re{M}(r,\xi_n)dr \right)
e^{-\beta(\xi_n)M_a} e^{\alpha(\xi_n)(t-t_0)b_0}.\]

Since $b_0\alpha(\xi_n)+a_0\beta(\xi_n)\leqslant 0$, we have that $1-e^{-2\pi i\mathcal{M}_0(\xi_n)}$ is bounded and for $t\in[0,2\pi]$ we have
$$e^{\alpha(\xi_n)(t-t_0)b_0}\leqslant e^{|\alpha(\xi_n)|2\pi|b_0|},$$
which increases slowly, since $\alpha(\xi)=O(\log(|\xi|)).$

Moreover, by using estimate \eqref{tscest-homo-alphabeta}, the term $e^{-\beta(\xi_n)M_a}$ will imply that $\widehat{f}(\cdot,\xi_n)$ decays rapidly, since $\beta(\xi_n)>\log(|\xi_n|^n).$

By Proposition \ref{tt1} we may assume that $Z_{\mathcal{M}}$ is finite and, by passing to a subsequence, that $1-e^{-2\pi i\mathcal{M}_0(\xi_n)}\neq0,$ then we define
\[\widehat{u}(t,\xi_n)=\frac{1}{1-e^{-2\pi i\mathcal{M}_0(\xi_n)}}\int_{0}^{2\pi}\exp\left( -\int_{t-s}^{t}i\mathcal{M}(r,\xi_n)dr \right) \widehat{f}(t-s,\xi_n) ds.
\]

For all $s,t\in[0,2\pi],$ we have
\[
\left|\frac{1}{1-e^{-2\pi i\mathcal{M}_0(\xi_n)}}\widehat{f}(t-s,\xi_n) \right|\leqslant e^{|\alpha(\xi_n)|4\pi|b_0|}e^{-\beta(\xi_n)M_a}.
\]
Thus,
\begin{align*}
|\widehat{u}(t,\xi_n)| &\leqslant \, \int_{0}^{2\pi}\!\!\exp\left(-\beta(\xi_n)\Big(M_a-\int_{t-s}^{t}a(r)dr\Big)\right)e^{\alpha(\xi_n)sb_0}e^{|\alpha(\xi_n)|4\pi|b_0|}ds \\[2mm]
                       &\leqslant \, 2\pi e^{|\alpha(\xi_n)|6\pi|b_0|}.
\end{align*}
 This estimate imply that the sequence $\widehat{u}(\cdot,\xi_n)$ increases slowly,
since $\alpha(\xi)=O(\log(|\xi|)).$ Hence
\begin{equation*}
u=\sum_{n=1}^{\infty}\widehat{u}(t,\xi_n) e^{ix\xi_n} \in \mathcal{D}'(\mathbb{T}^1\times\mathbb{T}^N).
\end{equation*}

Note that
\[
|\widehat{u}(t_0,\xi_n)| \geqslant \int_{s_0-\delta/2}^{s_0+\delta/2}\exp\left(-\beta(\xi_n)\Big(M_a-\int_{t_0-s}^{t_0}a(r)dr \Big)\right)ds.
\]

Since $M_a-\int_{t_0-s}^{t_0}a(r)dr\geqslant 0$ and $s_0$ is a zero of order even, the Laplace Method for Integrals implies that
\[|\widehat{u}(t_0,\xi_n)|\geqslant C(\beta(\xi_n))^{-1/2}\geqslant C(1+|\xi_n|^2)^{-m/4}\geqslant C2^{-m/4}|\xi_n|^{-m/2},\] where $C$ and $m$ are positive constants and do not depend on $n.$ This estimate implies that
$\widehat{u}(\cdot,\xi_n)$ does not decay rapidly.

Hence, $u \in \mathcal{D}'(\mathbb{T}^1\times\mathbb{T}^N)\setminus C^{\infty}(\mathbb{T}^1\times\mathbb{T}^N).$ Therefore, $L$ is not globally hypoelliptic, since $iLu=f$ by Lemma \ref{lt1}.

When $\beta(\xi_n)<0$ for all $n,$ we repeat the constructions above, where now we use
$$M_a=\displaystyle\min_{0\leqslant t,s\leqslant 2\pi}\int_{t-s}^{t}a(r)dr.$$

The proof of Theorem \ref{gt3} - item $ii)$ is complete.
\hfill $\square$ \vspace{2mm}
\end{proof}

\begin{remark}In the proof of sufficiency in Theorem \ref{gt3} item $ii),$ was not necessary suppose that $\beta(\xi)$ has super-logarithmic  growth. Moreover, we observe that this proof is not  consequence of Theorem \ref{gt1}, since $t\mapsto\Im\mathcal{M}(t,\xi)=a(t)\beta(\xi)+b(t)\alpha(\xi)$ may change sign, even when $a(t)$ does not change sign.
\end{remark}

We finish this subsection with an additional example which also exhibit a globally hypoelliptic operator in a situation in which $t\in\mathbb{T}^1\mapsto \Im\mathcal{M}(t,\xi)$ changes sign, for infinitely many indexes $\xi.$

\begin{example}\label{exampsign1} Let $b(t)$ be a $2\pi-$periodic extension of a real smooth nonzero function defined on $(0,2\pi)$ with integral equals to zero. Let $a(t)$ be the $2\pi-$periodic extension of the function $1-\psi,$ where $\psi\in C^\infty_c((0,2\pi),\mathbb{R}),$ $0\leqslant \psi(t)\leqslant 1$ and $\psi\equiv1$ in a neighborhood of support of $b(t).$

If $P(D_x)$ has symbol $p(\xi)=1+i(|\xi| \log(1 + |\xi|))$, then
\begin{equation*}
L=D_t +(a(t)+ib(t))P(D_x), \  (t,x)\in \mathbb{T}^1\times\mathbb{T}^N,
\end{equation*}
is globally hypoelliptic by Theorem \ref{gt3} - item $ii)$. Note that,
\begin{equation*}
t\mapsto \Im\mathcal{M}(t,\xi)=a(t)|\xi| \log(1 + |\xi|)+b(t),
\end{equation*}
changes sign for all indexes $\xi\in\mathbb{Z}^N.$
\end{example}

\subsection{A particular class of operators}\label{querover}

The aim of this subsection is to notice that there is a particular class of operators, which includes cases in which both $\alpha(\xi)$ and $\beta(\xi)$ have super-logarithmic growth, where the study of the global hypoellipticity follows from adaptations of the techniques used in the proof of Theorem \ref{gt3}.

For example, if $p(\xi)=\alpha(\xi)+i(1+\alpha(\xi))$ and $\alpha(\xi)$ has super-logarithmic growth, we cannot apply Theorem \ref{gt3} to study the global hypoellipticity of the operator
\begin{equation*}
D_t+(\cos^2(t)+i\sin(t))P(D_x),
\end{equation*}
but notice that $\Im\mathcal{M}(t,\xi)$ splits in the form
\begin{equation*}
[\sin(t)+\cos^2(t)]\alpha(\xi)+\cos^{2}(t).
\end{equation*}

Hence, $\Im\mathcal{M}(t,\xi)$ satisfies the assumptions concerning the speed of growth which was assumed in Theorem \ref{gt3}. We claim that the operator above is not globally hypoelliptic, since $[\sin(t)+\cos^2(t)]$ changes sign.

More generally, with similar arguments of those used in the proof of Theorem \ref{gt3}, we may give a complete answer about the global hypoellipticity of the operator $L,$ given by \eqref{MO}, in the case where $\Im\mathcal{M}(t,\xi)$ splits in the following way:
\begin{equation}\label{splitm}\Im\mathcal{M}(t,\xi)=\tilde{a}(t)\gamma(\xi)+\tilde{b}(t)\eta(\xi),\end{equation}
where $\tilde{a}(t)$ and $\tilde{b}(t)$ are real smooth functions on $\mathbb{T}^1,$ and $\gamma(\xi)$ and $\eta(\xi)$ are real valued toroidal symbols, such that either  $\gamma(\xi)=O(\log(|\xi|))$ or $\eta(\xi)=O(\log(|\xi|)).$

\begin{theorem}\label{conjsign2}Let $L$ be the operator defined in \eqref{MO} and assume that the decomposition \eqref{splitm} is true. Then $L$ is globally hypoelliptic if and only if $L_0$ is globally hypoelliptic and $\tilde{a}(t)$ (respectively $\tilde{b}(t)$) does not change sign whenever $\gamma(\xi)$ (respectively $\eta(\xi)$) has super-logarithmic growth.
\end{theorem}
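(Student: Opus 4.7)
The plan is to transplant the proof of Theorem \ref{gt3} to the generalized decomposition \eqref{splitm}: the estimates used there depended only on the structure of $\Im\mathcal{M}(t,\xi)$, with no specific relationship to the real part $\Re\mathcal{M}$, so once we recognize that $\tilde{a}\gamma + \tilde{b}\eta$ plays the same role that $a\beta + b\alpha$ did, the same machinery applies essentially verbatim. Necessity of the global hypoellipticity of $L_0$ is immediate from Theorem \ref{ncm2} (with $Z_\mathcal{M}$ finite by Remark \ref{remarkcc}), so the substance of the argument lies in the sign conditions.

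For the \textbf{sufficiency}, assume $L_0$ is globally hypoelliptic and the relevant sign condition holds, and let $u \in \mathcal{D}'(\mathbb{T}^1 \times \mathbb{T}^N)$ satisfy $iLu = f \in C^\infty$. Lemma \ref{lt1} gives $\widehat{u}(\cdot, \xi) \in C^\infty(\mathbb{T}^1)$, and for $|\xi|$ large enough (so $\xi \notin Z_\mathcal{M}$) the coefficient $\widehat{u}(t, \xi)$ is represented by either \eqref{Solu-1} or \eqref{Solu-2}. I would pick the formula according to the sign of the super-logarithmic summand of $\Im\mathcal{M}$: if $\gamma$ has super-logarithmic growth and $\tilde{a}$ does not change sign, select \eqref{Solu-1} when $\tilde{a}(t)\gamma(\xi) \leqslant 0$ on $\mathbb{T}^1$ and \eqref{Solu-2} when $\tilde{a}(t)\gamma(\xi) \geqslant 0$; in either case the contribution of $\tilde{a}\gamma$ to the relevant exponential has nonpositive exponent, while the $\tilde{b}\eta$ contribution produces only a polynomial factor $|\xi|^{K}$. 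If both $\gamma$ and $\eta$ have at most logarithmic growth, either formula works since $|\exp(\pm \int \Im\mathcal{M}(r,\xi)dr)| \leqslant C|\xi|^{K}$ irrespective of sign. Combined with $|1 - e^{\pm 2\pi i \mathcal{M}_0(\xi)}| \geqslant C|\xi|^{-M}$ from Proposition \ref{lt2}, and analogous estimates on $\partial_t^k \widehat{u}$, the rapid decay of $\widehat{f}$ transfers to $\widehat{u}$, yielding $u \in C^\infty$.

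For the \textbf{necessity}, suppose $\gamma(\xi)$ has super-logarithmic growth while $\tilde{a}(t)$ changes sign (the case of $\eta, \tilde{b}$ is symmetric). I would mimic the construction in the necessity part of Theorem \ref{gt3} item $ii)$. Pick $\{\xi_n\}$ with $|\xi_n|$ strictly increasing, $|\xi_n| \geqslant n$, and $|\gamma(\xi_n)| \geqslant \log(|\xi_n|^{n})$; passing to a subsequence, assume $\gamma(\xi_n) > 0$ and, after possibly replacing $L$ by $-L$ or $t$ by $-t$, that $\Im \mathcal{M}_0(\xi_n) = \tilde{a}_0 \gamma(\xi_n) + \tilde{b}_0 \eta(\xi_n) \leqslant 0$. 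Let $M_{\tilde{a}} = \max_{0 \leqslant t, s \leqslant 2\pi} \int_{t-s}^{t} \tilde{a}(r) dr > 0$, attained at some $(t_0, s_0)$ with $\sigma_0 = t_0 - s_0 \in (0, 2\pi)$ (after a translation in $t$). With $\phi \in C^\infty_c((\sigma_0 - \epsilon, \sigma_0 + \epsilon))$ a suitable bump, define $\widehat{f}(\cdot, \xi_n)$ as the $2\pi$-periodic extension of
\begin{equation*}
(1 - e^{-2\pi i \mathcal{M}_0(\xi_n)})\, \phi(t) \exp\Big(i \int_{t}^{t_0} \Re\mathcal{M}(r, \xi_n) \, dr\Big)\, e^{-\gamma(\xi_n) M_{\tilde{a}}}\, e^{-\eta(\xi_n)(t-t_0) \tilde{b}_0}.
\end{equation*}
The logarithmic growth of $\eta$ keeps $e^{-\eta(\xi_n)(t-t_0)\tilde{b}_0}$ of slow growth, while $e^{-\gamma(\xi_n) M_{\tilde{a}}}$ forces $\widehat{f}(\cdot, \xi_n)$ to decay rapidly, so $f \in C^\infty$. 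The associated $\widehat{u}(t, \xi_n)$ from formula \eqref{Solu-1} is of slow growth, so $u \in \mathcal{D}'$; evaluating at $t_0$ and applying the Laplace method to the integrand (using that $s_0$ is a zero of even order of $s \mapsto M_{\tilde{a}} - \int_{t_0-s}^{t_0}\tilde{a}(r)dr$) yields $|\widehat{u}(t_0, \xi_n)| \geqslant C(\gamma(\xi_n))^{-1/2}$, which is only polynomial decay; hence $u \notin C^\infty$, so $L$ is not globally hypoelliptic. The main obstacle I expect is the careful bookkeeping in this necessity construction: verifying that the exponential factors separate cleanly with the generic summands $\tilde{a}\gamma$ and $\tilde{b}\eta$ (rather than the specific $a\beta$ and $b_0\alpha$ of Theorem \ref{gt3}) and that the Laplace-method estimate yields exactly polynomial decay in $\xi_n$, with the residual subcases $\gamma(\xi_n) < 0$ or $\Im\mathcal{M}_0(\xi_n) > 0$ being handled by the same reflection tricks as before.
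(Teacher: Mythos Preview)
Your proposal is correct and follows precisely the approach the paper indicates: the paper does not write out a separate proof of Theorem \ref{conjsign2} but states that it is obtained ``with similar arguments of those used in the proof of Theorem \ref{gt3},'' and your adaptation of those arguments is accurate. One small remark on the necessity construction: your factor $e^{-\eta(\xi_n)(t-t_0)\tilde{b}_0}$ will not cancel the $\tilde{b}(t)\eta(\xi_n)$ contribution exactly at $t=t_0$ (the paper's analogous factor $e^{\alpha(\xi_n)(t-t_0)b_0}$ achieved exact cancellation only because $b$ had first been reduced to the constant $b_0$ via Corollary \ref{reduct2-cor}, a reduction unavailable here), but this is harmless since the logarithmic growth of $\eta$ keeps the residual factor polynomially bounded above and below, so both the slow-growth bound on $\widehat{u}$ and the Laplace-method lower bound on $|\widehat{u}(t_0,\xi_n)|$ survive with an extra harmless $|\xi_n|^{\pm K}$.
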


Observe that, under the assumptions in Theorem \ref{conjsign2} and assuming that $\gamma(\xi)$ has super-logarithmic growth, the converse of
Theorem \ref{gt1} holds true provided that the function $\Im\mathcal{M}(t,\xi)=\tilde{a}(t)\gamma(\xi)+\tilde{b}(t)\alpha(\xi)$ changes sign if and only if $\tilde{a}$ changes sign.
However, as we saw in Example \ref{exampsign1}, this property does not hold in general.

Bellow we present other interesting examples in this direction.

\begin{example}\label{exampabncs} If $a(t)$ and $b(t)$ do not vanish identically and are $\mathbb{R}-$linearly dependent functions, we may write
\begin{equation*}
\Im\mathcal{M}(t,\xi)=b(t)(\alpha(\xi)+\lambda\beta(\xi)),
\end{equation*}
with $\lambda\in\mathbb{R}\setminus\{0\}.$ In this case, Theorem \ref{conjsign2} gives a complete answer about the global hypoellipticity of
$L.$

When $\alpha(\xi)+\lambda\beta(\xi)=O(\log(|\xi|)),$ $L$ is globally hypoelliptic if and only if $L_0$ is globally hypoelliptic.

For instance, if $a(t)=-b(t)$ and $\beta(\xi)=1+\alpha(\xi),$ then $\Im\mathcal{M}(t,\xi)=-a(t).$ Hence, $L$ is globally hypoelliptic even if $b(t)$ changes sign.

When $\alpha(\xi)+\lambda\beta(\xi)$ has super-logarithmic growth, $L$ is globally hypoelliptic if and only if $L_0$ is globally hypoelliptic  and $b(t)$       does not change sign.
\end{example}

\begin{example}\label{exabncs2}
When $a(t)$ and $b(t)$ are $\mathbb{R}-$linearly independent functions, $L$ may be not globally hypoelliptic even if both $a(t)$ and $b(t)$ do not change sign. Indeed, we may find non-zero integers $p$ and $q$ so that $a(t)p+b(t)q$ changes sign (see Lemma 3.1 of \cite{BDGK}).

If, for instance, $\alpha(\xi)=q\gamma(\xi)$ and $\beta(\xi)=p\gamma(\xi),$ in which $\gamma(\xi)$ has super-logarithmic growth, then Theorem \ref{conjsign2} implies that $L$ is not globally hypoelliptic.
\end{example}

\section{Super-logarithmic growth}\label{sectfr}

The purpose of this section is to present additional results about the global hypoellipticity of the operator $L$ given by \eqref{MO}, which we recall
\[L=D_t+(a+ib)(t)P(D_x),\quad (t,x)\in\mathbb{T}^1 \times \mathbb{T}^N,\]
where $a(t)$ and $b(t)$ are real smooth functions on $\mathbb{T}^1,$ and $P(D_x)$ is a pseudo-differential operator on $\mathbb{T}^N,$ with symbol $p(\xi)=\alpha(\xi)+i\beta(\xi), \ \xi\in\mathbb{Z}^N.$

We consider a more general situation where either $\alpha(\xi)$ or $\beta(\xi)$ has super-logarithmic growth and we present a necessary condition for the global hypoellipticity of $L,$ which is given by a control in the sign of certain functions.

Precisely, assume that $\beta(\xi)$ has super-logarithmic growth and let
\[E_{\alpha,\beta}\doteq\big\{K\in\mathbb{R};\,\,\mbox{there exists } \{\xi_n\}\subset\mathbb{Z}^N \ \mbox{satisfying }(\ast)\big\},\]
\begin{eqnarray*} (\ast) & & \left\{
\begin{array}{ll}
|\xi_n|\rightarrow\infty; \\[3mm]
\alpha(\xi_n)/\beta(\xi_n)\rightarrow K, \mbox{ as } n\rightarrow\infty;\\[3mm]
|\beta(\xi_n)|\geqslant n\log(|\xi_n|), \mbox{ for all } n\in\mathbb{N}.
\end{array}\right.
\end{eqnarray*}


In this case, we prove that $L$ is not globally hypoelliptic if there exists $K\in E_{\alpha,\beta}$ such that the function $t\in\mathbb{T}^1 \mapsto a(t)+b(t)K$ changes sign.

An analogous result holds when $\alpha(\xi)$ has super-logarithmic growth. In this case, $L$ is not globally hypoelliptic if there exists $C\in E_{\beta,\alpha}$ such that the function $t\in\mathbb{T}^1 \mapsto b(t)+a(t)C$ changes sign.

In particular, we obtain a necessary condition for the global hypoellipticity of $L$ when either $\alpha(\xi)$ or $\beta(\xi)$ has super-logarithmic growth and the limit $\lim_{|\xi|\rightarrow\infty}\alpha(\xi)/\beta(\xi)$ exists. When the order of growth of $\alpha(\xi)$ is faster (respectively slower) than the order of growth of $\beta(\xi),$ the operator $L$  is not globally hypoelliptic if $b(t)$ (respectively $a(t)$) changes sign (see Corollary \ref{cornew}).

\begin{theorem}\label{ar1} If $\beta(\xi)$ has super-logarithmic growth, $K\in E_{\alpha,\beta}$ and the function $t\in\mathbb{T}^1 \mapsto a(t)+b(t)K$ changes sign, then $L$ given by \eqref{MO} is not globally hypoelliptic. Similarly, if  $\alpha(\xi)$ has super-logarithmic growth, $C\in E_{\beta,\alpha},$ and $t\in\mathbb{T}^1 \mapsto b(t)+a(t)C$ changes sign, then $L$ is not globally hypoelliptic.
\end{theorem}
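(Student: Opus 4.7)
The plan is to mimic the necessity direction of Theorem \ref{gt3}, item $ii)$, exploiting the factorization
\[
\Im\mathcal{M}(t,\xi_n)=\beta(\xi_n)\,h_n(t),\qquad h_n(t):=a(t)+b(t)\,\frac{\alpha(\xi_n)}{\beta(\xi_n)},
\]
which, combined with the definition of $E_{\alpha,\beta}$, ensures $h_n\to h:=a+bK$ in every $C^k(\mathbb{T}^1)$ norm while $\beta(\xi_n)$ dominates in the forthcoming estimates. I argue by contradiction, constructing a smooth $f$ together with a non-smooth distributional solution of $iLu=f$.

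First, by Theorem \ref{ncm2} and Proposition \ref{tt1}, I may assume $L_0$ is globally hypoelliptic and $Z_{\mathcal{M}}$ is finite, then extract a subsequence on which $\beta(\xi_n)$ has a constant sign, say positive. Possibly replacing $L$ by $-L$ and $t$ by $-t$ (neither operation affects global hypoellipticity), I also arrange $\Im\mathcal{M}_0(\xi_n)=a_0\beta(\xi_n)+b_0\alpha(\xi_n)\leqslant 0$, keeping $|1-e^{-2\pi i\mathcal{M}_0(\xi_n)}|\leqslant 2$. Since $h$ changes sign, $M_h:=\max_{(t,s)\in[0,2\pi]^2}\int_{t-s}^t h(r)\,dr>0$, and a translation in $t$ places the maximizer $(t_0,s_0)$ so that $t_0,\sigma_0:=t_0-s_0\in(0,2\pi)$. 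Uniform convergence $h_n\to h$ gives $M_{h_n}\geqslant M_h/2>0$ for $n$ large, with local maximizers $(t_n,s_n)\to(t_0,s_0)$.

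Next I fix $\phi\in C_c^\infty((\sigma_0-\epsilon,\sigma_0+\epsilon))$ with $0\leqslant\phi\leqslant 1$ and $\phi\equiv 1$ on a neighborhood of $\sigma_0$, and define $\widehat{f}(\cdot,\xi_n)$ as the $2\pi$-periodic extension of
\[
\bigl(1-e^{-2\pi i\mathcal{M}_0(\xi_n)}\bigr)\,\phi(t)\,\exp\!\Bigl(i\!\int_{t}^{t_0}\!\Re\mathcal{M}(r,\xi_n)\,dr\Bigr)\,e^{-\beta(\xi_n)M_{h_n}},
\]
with $\widehat{f}\equiv 0$ off the sequence. Because $\beta(\xi_n)\geqslant n\log|\xi_n|$, the factor $e^{-\beta(\xi_n)M_{h_n}}$ produces rapid decay in $|\xi_n|$, and analogous bounds on $t$-derivatives (using $p\in S^m$) give $f\in C^\infty$. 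Feeding $\widehat{f}$ into formula \eqref{Solu-1}, the $\Re\mathcal{M}$-phase cancels exactly, leaving
\[
\widehat{u}(t,\xi_n)=\int_0^{2\pi}\!\phi(t-s)\exp\!\Bigl(\beta(\xi_n)\Bigl[\int_{t-s}^t h_n(r)\,dr-M_{h_n}\Bigr]\Bigr)\,ds,
\]
whose integrand is at most $1$, so $|\widehat{u}|\leqslant 2\pi$ and $u\in\mathcal{D}'(\mathbb{T}^1\times\mathbb{T}^N)$. At $t=t_n$ the exponent attains its zero maximum at $s=s_n$ with $\phi(t_n-s_n)\approx 1$; a Laplace-type asymptotics, uniform in $n$ thanks to $h_n\to h$ in $C^\infty$, yields $|\widehat{u}(t_n,\xi_n)|\geqslant C\,\beta(\xi_n)^{-1/(2k)}$, where $2k$ is the order of vanishing of $h$ at $\sigma_0$. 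Since $\beta\in S^m$, this is bounded below by a negative power of $|\xi_n|$ and fails to decay rapidly, so $u\notin C^\infty(\mathbb{T}^1\times\mathbb{T}^N)$, contradicting global hypoellipticity.

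The second assertion, under the symmetric hypotheses on $\alpha$ and $C\in E_{\beta,\alpha}$, follows by the same argument after swapping the roles $\alpha\leftrightarrow\beta$ and $a\leftrightarrow b$, using the factorization $\Im\mathcal{M}(t,\xi_n)=\alpha(\xi_n)[b(t)+a(t)\beta(\xi_n)/\alpha(\xi_n)]$. The main obstacle I anticipate is the uniform Laplace estimate: the critical point $s_n$ drifts with $n$ and the precise order of vanishing of $h$ at $\sigma_0$ is not fixed a priori, so the $n$-uniform lower bound must be handled with care, for instance by localizing near $\sigma_0$ where $\int_{t-s}^t h_n(r)\,dr$ stays comparable to its supremum on a short interval of fixed length, which sidesteps the need to know the exact order of the zero.
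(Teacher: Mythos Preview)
Your approach coincides with the paper's. The only point worth sharpening is the Laplace step you flag as uncertain: the paper bypasses the order-of-vanishing issue entirely by noting that $\phi_n(s):=M_{h_n}-\int_{t_n-s}^{t_n}h_n(r)\,dr$ is non-negative with an interior zero at $s_n$, so Taylor's formula gives the quadratic \emph{upper} bound $\phi_n(s)\leq\tfrac{1}{2}\|h_n'\|_\infty\,(s-s_n)^2$, with constant uniform in $n$ since $h_n\to h$ in $C^1$; this yields
\[
|\widehat u(t_n,\xi_n)|\;\geq\;\int_{s_n-\epsilon/4}^{s_n+\epsilon/4}e^{-C\beta(\xi_n)(s-s_n)^2}\,ds\;\geq\;\tilde C\,\beta(\xi_n)^{-1/2}
\]
directly, with no reference to the order of the zero of $h$ at $\sigma_0$ (which could even be infinite). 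So the obstacle you anticipate dissolves: for a lower bound on the integral one only needs an upper bound on $\phi_n$, and the quadratic one always holds. A minor aside: your displayed formula for $\widehat u$ drops a unit-modulus factor $\exp\bigl(i\!\int_t^{t_0}\Re\mathcal{M}\bigr)$ coming from your choice of $t_0$ rather than $t_n$ in $\widehat f$; this is harmless for the modulus estimates.
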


\begin{proof}We consider the situation in which $\beta(\xi)$ has super-logarithmic growth and $K\in E_{\alpha,\beta}.$ The other situation is analogous.

We assume that $t\in\mathbb{T}^1 \mapsto a(t)+b(t)K$ changes sign and prove that $L$ is not globally hypoelliptic.

The assumptions on $\beta$ and $K$ imply that there exists a sequence $\{\xi_n\}$ such that $|\xi_n|$ is strictly increasing, $|\xi_n|>n,$ $|\beta(\xi_n)|\geqslant n\log(|\xi_n|),$ $\alpha(\xi_n)/\beta(\xi_n)\rightarrow K,$ and $\xi_n\not\in Z_{\mathcal{M}},$ for all $n.$ Note that we are assuming that $Z_\mathcal{M}$ is finite, otherwise, by Proposition \ref{tt1}, there is nothing to prove.

Without loss of generality, suppose that $b_0\alpha(\xi_n)+a_0\beta(\xi_n)\leqslant 0,$ for all $n.$ Indeed, if necessary we can consider $-L$ and perform the change of variable $t$ by $-t.$

By using a subsequence, we may assume that either $\beta(\xi_n)<0,$ for all $n,$ or $\beta(\xi_n)>0,$ for all $n.$

Suppose first that $\beta(\xi_n)>0,$ for all $n.$ For each $n,$ set \[M_n=\max_{0\leqslant t,s\leqslant 2\pi}\left\{\int_{t-s}^{t}a(r)+b(r)\frac{\alpha(\xi_n)}{\beta(\xi_n)}dr\right\}=\int_{t_n-s_n}^{t_n}a(r)+b(r)\frac{\alpha(\xi_n)}{\beta(\xi_n)}dr.\]

Again, by passing to a subsequence, there exist $t_0$ and $s_0$ such that $t_n\rightarrow t_0$ and $s_n\rightarrow s_0,$ as $n\rightarrow\infty.$ Since $\alpha(\xi_n)/\beta(\xi_n)\rightarrow K,$ as $n\rightarrow\infty,$ it follows that
\[
  \int_{t_0-s_0}^{t_0}a(r)+b(r)K dr=\max_{0\leqslant t,s\leqslant 2\pi}\int_{t-s}^{t}a(r)+b(r)Kdr\doteq M_{ab}.
\]

Since $a(t)+b(t)K$ changes sign, we have $M_{ab}>0$ and $s_0\in(0,2\pi).$ Performing a translation in the variable $t,$ we may assume that $t_0,$ $s_0$ and $\sigma_0\doteq t_0-s_0$ belong to $(0,2\pi).$

Choose $\epsilon>0$ small enough so that $0<\sigma_0-\epsilon$ and $\sigma_0+\epsilon<t_0.$ Consider $\phi$ belonging to $C^{\infty}_{c}((\sigma_0-\epsilon,\sigma_0+\epsilon),\mathbb{R})$ such that $0\leqslant\phi(t)\leqslant 1$ and $\phi(t)=1$ for all $t\in[\sigma_0-\epsilon/2,\sigma_0+\epsilon/2].$

Finally, we define $\widehat{f}(\cdot,\xi_n)$ as being the $2\pi-$periodic extension of
\[
(1-e^{-2\pi i\mathcal{M}_0(\xi_n)})\phi(t)\exp\left(i\int_{t}^{t_n}\Re\mathcal{M}(r,\xi_n)dr \right)e^{-\beta(\xi_n)M_n}.
\]

Note that $1-e^{-2\pi i\mathcal{M}_0(\xi_n)}$ is bounded, since $b_0\alpha(\xi_n)+a_0\beta(\xi_n)\leqslant 0.$ Thus, by estimate \eqref{tscest-homo-alphabeta}, the behaviour of the term $e^{-\beta(\xi_n)M_n}$ when $|\xi_n|\rightarrow \infty$ imply that $\widehat{f}(\cdot,\xi_n)$ decays rapidly, since $M_n\rightarrow M_{ab}>0$ and $\beta(\xi_n)\geqslant \log(|\xi_n|^n).$

It follows that \[f(t,x)=\sum_{n=1}^{\infty}\widehat{f}(t,\xi_n)e^{ix\xi_n} \in C^{\infty}(\mathbb{T}^1\times\mathbb{T}^N).\]

Since $\xi_n\not\in Z_{\mathcal{M}},$ we may define
\[
\widehat{u}(t,\xi_n)=\frac{1}{1-e^{-2\pi i\mathcal{M}_0(\xi_n)}} \int_{0}^{2\pi}\exp \left(-i\int_{t-s}^{t}\mathcal{M}(r,\xi_n)dr\right) \widehat{f}(t-s,\xi_n) ds,
\]
which belongs to $C^{\infty}(\mathbb{T}^1).$

\smallskip
For $n$ large enough, the estimate
\begin{equation*}
|(1-e^{-2\pi i\mathcal{M}_0(\xi_n)})^{-1}\widehat{f}(t-s,\xi_n)|\leqslant e^{-\beta(\xi_n)M_n}\leqslant 1
\end{equation*}
implies that
\[
|\widehat{u}(t,\xi_n)|\leqslant \int_{0}^{2\pi} \exp\left(-\beta(\xi_n)\Big(M_n-\int_{t-s}^{t}a(r)+b(r)\frac{\alpha(\xi_n)}{\beta(\xi_n)}dr\Big)\right)ds \leqslant 2\pi.
\]
Hence, $\widehat{u}(\cdot,\xi_n)$ increases slowly. Then \[u=\sum_{n=1}^{\infty}\widehat{u}(t,\xi_n) e^{ix\xi_n} \in \mathcal{D}'(\mathbb{T}^1\times\mathbb{T}^N).\]

We will show that $u\not\in C^{\infty}(\mathbb{T}^1\times\mathbb{T}^N).$ In fact, for $n$ sufficiently large we have $\sigma_0+\epsilon<t_n,$ from which we can infer that
\begin{eqnarray*}
  |\widehat{u}(t_n,\xi_n)| & = & \left|\int_{t_n-\sigma_0-\epsilon}^{t_n-\sigma_0+\epsilon} \phi(t_n-s)\right.\\[2mm]
  & & \ \times \left. \exp\left(-\beta(\xi_n)\left(M_n-\int_{t_n-s}^{t_n}a(r)+b(r)\frac{\alpha(\xi_n)}{\beta(\xi_n)}dr\right)\right)ds\right|.
\end{eqnarray*}

Since $t_n-s_n\rightarrow\sigma_0,$ we have
\begin{equation*}
t_n-s_n-\sigma_0-\epsilon/2<-\epsilon/4 \ \textrm{ and } \ t_n-s_n-\sigma_0+\epsilon/2>\epsilon/4,
\end{equation*}
for $n$ large enough. Hence, for $n$ large enough, we have
\begin{equation*}
(s_n-\epsilon/4,s_n+\epsilon/4)\subset(t_n-\sigma_0-\epsilon,t_n-\sigma_0+\epsilon)
\end{equation*}
and $\phi(t_n-s)=1,$ for $s\in(s_n-\epsilon/4,s_n+\epsilon/4).$ It follows that \[
|\widehat{u}(t_n,\xi_n)| \geqslant \int_{s_n-\epsilon/4}^{s_n+\epsilon/4} \exp\left(-\beta(\xi_n) \Big(M_n-\int_{t_n-s}^{t_n}a(r)+b(r)\frac{\alpha(\xi_n)}{\beta(\xi_n)}dr\Big)\right)ds.
\]

For each $n,$ the function
\[[s_n-\epsilon/4,s_n+\epsilon/4]\ni s\mapsto\phi_n(s)\doteq M_n-\int_{t_n-s}^{t_n}a(r)+b(r)\frac{\alpha(\xi_n)}{\beta(\xi_n)}dr\]
vanishes at $s_n$ and $\phi_n(s)\geqslant 0,$ for all $s.$
Furthermore, since $\alpha(\xi_n)/\beta(\xi_n)\rightarrow K$ and $t_n\rightarrow t_0,$ there exists $C>0,$ which does not depend on $n,$ such that  \[|\widehat{u}(t_n,\xi_n)|\geqslant \int_{s_n-\epsilon/4}^{s_n+\epsilon/4}e^{-\beta(\xi_n)C(s-s_n)^2}ds.\]

The Laplace Method for Integrals implies that \[|\widehat{u}(t_n,\xi_n)|\geqslant \widetilde{C}\beta(\xi_n)^{-1/2},\] where $\widetilde{C}>0$ does not depend on $n.$

As in the proof of necessity of item $ii)$ in Theorem \ref{gt3},  the previous estimate implies that $\widehat{u}(\cdot,\xi_n)$ does not decay rapidly. Hence, $u$ belongs to $\mathcal{D}'(\mathbb{T}^1\times\mathbb{T}^N)\setminus C^{\infty}(\mathbb{T}^1\times\mathbb{T}^N)$ and $L$ is not globally hypoelliptic.

\smallskip

Finally, in the case $\beta(\xi_n)<0,$ for all $n,$ we repeat the technique above, but now we use \[M_n=\min_{0\leqslant t,s\leqslant 2\pi}\left\{\int_{t-s}^{t}a(r)+b(r)\frac{\alpha(\xi_n)}{\beta(\xi_n)}dr\right\}=\int_{t_n-s_n}^{t_n}a(r)+b(r)\frac{\alpha(\xi_n)}{\beta(\xi_n)}dr\] and \[M_{ab}=\min_{0\leqslant t,s\leqslant 2\pi}\int_{t-s}^{t}a(r)+b(r)Kdr=\int_{t_0-s_0}^{t_0}a(r)+b(r)Kdr<0.\]

The proof of Theorem \ref{ar1} is complete.
\end{proof}

\begin{corollary}\label{cornew}If $\beta(\xi)$ has super-logarithmic growth and $\alpha(\xi)/\beta(\xi)\rightarrow K,$ as $|\xi|\rightarrow\infty,$ then $L$ is not globally hypoelliptic  if $a(t)+b(t)K$ changes sign. Similarly, if $\alpha(\xi)$ has super-logarithmic growth and $\beta(\xi)/\alpha(\xi)\rightarrow C,$ as $|\xi|\rightarrow\infty,$ then $L$ is not globally hypoelliptic if $b(t)+a(t)C$ changes sign.
\end{corollary}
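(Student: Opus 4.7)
The plan is to reduce Corollary \ref{cornew} immediately to Theorem \ref{ar1} by verifying that the stronger hypothesis $\alpha(\xi)/\beta(\xi) \to K$ as $|\xi| \to \infty$, together with the super-logarithmic growth of $\beta$, forces $K$ to belong to the set $E_{\alpha,\beta}$. Once this membership is established, Theorem \ref{ar1} gives the conclusion without further work; the entire content of the corollary is thus the production of a sequence $\{\xi_n\}$ witnessing $K \in E_{\alpha,\beta}$.

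First I would extract from the super-logarithmic growth of $\beta(\xi)$ a sequence $\{\xi_n\} \subset \mathbb{Z}^N$ with $|\xi_n|$ strictly increasing to infinity and $|\beta(\xi_n)| \geq n \log(|\xi_n|)$ for every $n$. The negation of the estimate \eqref{log-estimate-1} in Definition \ref{defslg} reads: for every $\kappa > 0$ and every $n_0 \in \mathbb{N}$, there exists $\xi \in \mathbb{Z}^N$ with $|\xi| \geq n_0$ and $|\beta(\xi)| > \kappa \log(|\xi|)$. Applying this inductively with $\kappa = n$ and $n_0$ chosen larger than $|\xi_{n-1}|$ at each stage yields the desired sequence.

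Next, since by hypothesis $\alpha(\xi)/\beta(\xi) \to K$ as $|\xi| \to \infty$ (which presupposes $\beta(\xi) \neq 0$ for large $|\xi|$, a condition guaranteed along our sequence since $|\beta(\xi_n)| \geq n \log(|\xi_n|) \to \infty$), the convergence restricts to any subsequence going to infinity; in particular $\alpha(\xi_n)/\beta(\xi_n) \to K$. All three defining conditions $(\ast)$ of $E_{\alpha,\beta}$ are now verified, so $K \in E_{\alpha,\beta}$. Theorem \ref{ar1}, combined with the assumption that $t \mapsto a(t) + b(t) K$ changes sign, then implies that $L$ is not globally hypoelliptic.

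The second assertion is entirely symmetric: under super-logarithmic growth of $\alpha$ and convergence $\beta(\xi)/\alpha(\xi) \to C$, the same inductive construction (with the roles of $\alpha$ and $\beta$ swapped) produces a sequence witnessing $C \in E_{\beta,\alpha}$, and the second half of Theorem \ref{ar1} concludes. There is essentially no substantive obstacle here—the delicate construction of sign-changing Fourier data and the Laplace-method estimate were already carried out in Theorem \ref{ar1}. The only point demanding any care is the inductive extraction of $\{\xi_n\}$ with the $n \log(|\xi_n|)$ lower bound on $|\beta(\xi_n)|$, which is elementary from the definition of super-logarithmic growth.
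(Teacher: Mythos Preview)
Your proposal is correct and is precisely the argument the paper has in mind: the corollary is stated immediately after Theorem \ref{ar1} with no proof, since the hypothesis $\alpha(\xi)/\beta(\xi)\to K$ together with the super-logarithmic growth of $\beta$ trivially yields $K\in E_{\alpha,\beta}$, after which Theorem \ref{ar1} applies directly. Your inductive extraction of the witnessing sequence from the negation of \eqref{log-estimate-1} is the standard (and only) step needed.
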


We say that $\beta(\xi)$ goes to infinity faster than $\alpha(\xi)$, and use the notation $\alpha(\xi)=o(\beta(\xi))$, if for all positive constant $\kappa$ there exists a positive constant $n_0$ such that
$|\alpha(\xi)|\leqslant \kappa|\beta(\xi)|,$ for all $|\xi|\geqslant n_0.$ Note that, in this case, $\alpha(\xi)/\beta(\xi)\rightarrow0,$ as $|\xi|\rightarrow\infty.$

\begin{corollary}\label{cornew2}
If $\beta(\xi)$ has super-logarithmic growth and $\alpha(\xi)=o(\beta(\xi)),$ then $L$ is not globally hypoelliptic if $a(t)$ changes sign. If $\alpha(\xi)$ has super-logarithmic growth and $\beta(\xi)=o(\alpha(\xi)),$ then $L$ is not globally hypoelliptic if $b(t)$ changes sign.
\end{corollary}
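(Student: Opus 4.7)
The plan is to derive Corollary \ref{cornew2} as a direct specialization of Corollary \ref{cornew} (equivalently of Theorem \ref{ar1}) with the choice of exponent $K=0$. The heavy lifting has already been done in the proof of Theorem \ref{ar1}; what remains is only to match the $o$-growth hypothesis with the three defining properties of the set $E_{\alpha,\beta}$.

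First I would recall, as noted in the paragraph immediately preceding the statement, that the hypothesis $\alpha(\xi)=o(\beta(\xi))$ implies $\alpha(\xi)/\beta(\xi)\to 0$ as $|\xi|\to\infty$. In particular, $\beta(\xi)\neq 0$ for $|\xi|$ sufficiently large, so the quotient is eventually well-defined. With the choice $K=0$ the function $t\in\mathbb{T}^1\mapsto a(t)+b(t)K$ becomes simply $a(t)$, which changes sign by assumption. Applying Corollary \ref{cornew} one immediately concludes that $L$ is not globally hypoelliptic.

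For the reader who prefers to invoke Theorem \ref{ar1} directly rather than its Corollary \ref{cornew}, the only verification required is that $0\in E_{\alpha,\beta}$. The super-logarithmic growth of $\beta(\xi)$, via Definition \ref{defslg}, provides for each $n\in\mathbb{N}$ indices $\xi\in\mathbb{Z}^N$ with $|\xi|$ arbitrarily large such that $|\beta(\xi)|\geqslant n\log(|\xi|)$. A routine diagonal extraction produces a sequence $\{\xi_n\}\subset\mathbb{Z}^N$ with $|\xi_n|$ strictly increasing to infinity, satisfying $|\beta(\xi_n)|\geqslant n\log(|\xi_n|)$; the remaining condition $\alpha(\xi_n)/\beta(\xi_n)\to 0$ holds automatically along this sequence thanks to $\alpha=o(\beta)$. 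Hence all three defining conditions of $E_{\alpha,\beta}$ are met with $K=0$.

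The second statement follows by the completely symmetric argument: $\beta(\xi)=o(\alpha(\xi))$ forces $\beta(\xi)/\alpha(\xi)\to 0$, so $C=0\in E_{\beta,\alpha}$, and $t\mapsto b(t)+a(t)\cdot 0=b(t)$ changes sign by hypothesis, whence Theorem \ref{ar1} again yields the failure of global hypoellipticity. There is no real obstacle to this proof; the entire content of the corollary lies in recognizing that the $o$-condition is exactly what is needed to place $0$ in the relevant set $E_{\alpha,\beta}$ (or $E_{\beta,\alpha}$), reducing everything to the already-established Theorem \ref{ar1}.
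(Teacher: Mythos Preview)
Your proposal is correct and matches the paper's intended approach: the paper states Corollary \ref{cornew2} immediately after noting that $\alpha(\xi)=o(\beta(\xi))$ forces $\alpha(\xi)/\beta(\xi)\to 0$, so it is meant as the specialization $K=0$ (respectively $C=0$) of Corollary \ref{cornew}. Your additional remark verifying $0\in E_{\alpha,\beta}$ via a diagonal extraction is a welcome clarification but not strictly needed once one goes through Corollary \ref{cornew}.
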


\begin{remark}The main contribution of Theorem \ref{ar1} and its corollaries is in the case where both $\alpha(\xi)$ and $\beta(\xi)$ have super-logarithmic growth. We invite the reader to compare this result with items $ii)$ and $iii)$ in Theorem \ref{gt3}.
\end{remark}

\begin{example}\label{exampnew1}If $a(t)=\cos^2(t),$ $b(t)=-\sin^2(t),$ $\alpha(\xi)=\sqrt{|\xi|}$ and $\beta(\xi)=\sqrt{|\xi|+1},$ then Theorem \ref{ar1} implies that $L$ is not globally hypoelliptic. Note that $\alpha(\xi)/\beta(\xi)\rightarrow1,$ as $|\xi|\rightarrow\infty,$ and $a(t)+b(t)=\cos^2(t)-\sin^2(t)$ changes sign.
\end{example}

Under the conditions in Corollary \ref{cornew}, in the case in which $\beta(\xi)$ has super-logarithmic growth with
\begin{equation*}
\liminf_{|\xi|\rightarrow\infty } |\xi|^M |\beta(\xi)| > 0,
\end{equation*}
for some $M\geqslant 0$ and $K\doteq\lim_{|\xi|\rightarrow\infty}\alpha(\xi)/\beta(\xi),$ the operator $L$ is globally hypoelliptic provided that $a(t)+b(t)K$ never vanishes. In fact, for $|\xi|$ sufficiently large, the function
\begin{equation*}
t\mapsto\Im\mathcal{M}(t,\xi)=\beta(\xi)[a(t)+b(t)\alpha(\xi)/\beta(\xi)]
\end{equation*}
does not change sign. Moreover, $L_0$ is globally hypoelliptic, since $|a_0+b_0K|>0$ and
\begin{equation*}
|\tau+\mathcal{M}_0(\xi)|\geqslant |\Im\mathcal{M}_0(\xi)|\geqslant |\beta(\xi)||a_0+b_0\alpha(\xi)/\beta(\xi)|\geqslant|\beta(\xi)||a_0+b_0K|/2,
\end{equation*}
for $|\xi|$ large enough. Hence, Theorem \ref{gt1} implies that $L$ is globally hypoelliptic.

\begin{example}\label{exampnew2}Assume that $a(t)=1+\sin(t)$ and $b(t)=1-\sin(t).$ If $\alpha(\xi)=\sqrt{|\xi|}+\xi$ and $\beta(\xi)=\xi,$ then $\alpha(\xi)/\beta(\xi)\rightarrow 1,$ as $|\xi|\rightarrow\infty,$ and $a(t)+b(t)=2$ never vanishes. Hence, $L$ is globally hypoelliptic.
\end{example}

On the other hand, if $a(t)+b(t)K$ does not change sign, but $a(t)+b(t)K$ vanishes, then $L$ may be non-globally hypoelliptic. In Subsection \ref{exar2} we explore this phenomenon when $K=0,$ where we present a non-globally hypoelliptic operator in the case in which $a(t)$ does not change sign, $a(t)$ vanishes (of finite order) at a singular point, both $\alpha(\xi)$ and $\beta(\xi)$ have super-logarithmic growth, and $\alpha(\xi)=o(\beta(\xi)).$

\subsection{Order of vanishing}\label{exar2}

The idea here is to show that certain relations between the order of vanishing of $a(t)$ and the speed in which $\alpha(\xi)$ and $\beta(\xi)$ go to infinity, play a role in the global hypoellipticity of the operators studied in this article.

We start with an example which illustrates that the operator may be globally hypoelliptic if, for $\xi$ large, the functions $\Im\mathcal{M}(t,\xi)$ vanishes only of finite order, and the order of vanishing at each zero is appropriated to absorb the growth of $p(\xi).$ This situation is generalized in Theorem \ref{conjar} and, in the sequence, we show that the converse of this result does not hold.

\bigskip
\noindent {\sc First example:} Let $b\equiv1$ and $a\in C^\infty(\mathbb{T}^1,\mathbb{R})$ be a function such that $a(t)=-(t-\pi)^2$ on a fixed interval $(\pi-\epsilon,\pi+\epsilon),$ $a(t)$ is increasing on $[0,\pi-\epsilon),$ and is decreasing on $(\pi+\epsilon,2\pi].$

Note that, $a^{-1}(0)\cap[0,2\pi]=\{\pi\}$ and $a(t)<0$ for $t\in[0,\pi)\cup(\pi,2\pi].$
Setting $p(\xi)=\sqrt{|\xi|}+i|\xi|\sqrt{|\xi|},$ $\xi\in\mathbb{Z},$ we have
\begin{equation*}
\Im\mathcal{M}(t,\xi)=\sqrt{|\xi|}(|\xi| a(t)+1).
\end{equation*}

Note that $\Im\mathcal{M}(\cdot,\xi)$ changes sign for all but a finite number of indexes $\xi.$

We will prove that
\begin{equation*}
L=D_t+(a+i)P(D_x), \  (t,x)\in\mathbb{T}^2,
\end{equation*}
is globally hypoelliptic, for this, given $u\in\mathcal{D}'(\mathbb{T}^2)$ such that $iLu=f,$ with $f\in C^\infty(\mathbb{T}^2),$ we must show that $u\in C^\infty(\mathbb{T}^2).$

Lemma \ref{lt1} implies that  $\widehat{u}(\cdot,\xi)$ belongs to $C^\infty(\mathbb{T}^1)$ for all $\xi,$ and for $|\xi|> -a_0^{-1},$ we may write
\[
\widehat{u}(t,\xi) = \frac{1}{ 1-e^{-2\pi i\mathcal{M}_0(\xi)}}\int_{0}^{2\pi}\exp\left(-\int_{t-s}^{t}i\mathcal{M}(r,\xi)dr\right) \widehat{f}(t-s,\xi) ds.
\]

For $|\xi|> -a_0^{-1},$ the term $(1-e^{-2\pi i\mathcal{M}_0(\xi)})^{-1}$ is bounded; indeed, since $a_0<0$ we have
\begin{equation*}
2\pi\Im\mathcal{M}_0(\xi)=2\pi\sqrt{|\xi|}(|\xi|a_0+1)\rightarrow-\infty, \ \textrm{ as} \ |\xi|\rightarrow\infty.
\end{equation*}
Moreover, for $t,s\in[0,2\pi],$ we have
\begin{align*}
-\Re\int_{t-s}^{t}i\mathcal{M}(r,\xi)dr =& \int_{t-s}^{t}\Im\mathcal{M}(r,\xi)dr \\
   = & \int_{t-s}^{t}|\xi|\sqrt{|\xi|}a(r)+\sqrt{|\xi|}dr\\
   \leqslant& \int_{\pi-1/\sqrt{|\xi|}}^{\pi+1/\sqrt{|\xi|}}|\xi|\sqrt{|\xi|}a(r)+\sqrt{|\xi|}dr\\
   = & -\int_{\pi-1/\sqrt{|\xi|}}^{\pi+1/\sqrt{|\xi|}}|\xi|\sqrt{|\xi|}(r-\pi)^2dr+2=4/3,
\end{align*}
for $|\xi|$ sufficiently large.

Hence, the rapid decaying of $\widehat{f}(\cdot,\xi)$ and estimates \eqref{tscest-homo-alphabeta} will imply that $\widehat{u}(\cdot,\xi)$ decays rapidly. Hence, $u \in C^\infty(\mathbb{T}^2)$ and  $L$ is globally hypoelliptic.

\bigskip
The following result generalizes the situation presented in the previous example.

\begin{theorem}\label{conjar} Suppose that $\beta(\xi)$ has super-logarithmic growth with
\begin{equation*}
\liminf_{|\xi|\rightarrow\infty } |\xi|^M |\beta(\xi)| > 0,
\end{equation*}
for some $M\geqslant 0$ and $\alpha(\xi)=o(\beta(\xi)).$ Assume that $a(t)$ does not change sign and vanishes of finite order only. Write $a^{-1}(0)=\{t_1<\cdots<t_n\}$ and let $m_j$ be the order of vanishing of $a(t)$ at $t_j,$ $j=1,\ldots,n.$ If for each $j$ we have
\begin{equation*}
|\alpha(\xi)/\beta(\xi)|^{1/m_j}|\alpha(\xi)|=O(\log(|\xi|)),
\end{equation*}
then the operator $L$ given by \eqref{MO} is globally hypoelliptic.
\end{theorem}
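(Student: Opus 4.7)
The plan is to adapt the strategy of the First Example in this subsection: represent $\widehat{u}(\cdot,\xi)$ via one of the two formulas in Lemma \ref{lt1} and show that the resulting integrating factor grows at most polynomially in $|\xi|$. First I would observe that $L_0$ is globally hypoelliptic. Since $a(t)$ has only finite-order zeros and does not change sign, $a\not\equiv 0$, hence $a_0\neq 0$. Combined with $\alpha(\xi)=o(\beta(\xi))$ and $\liminf_{|\xi|\to\infty}|\xi|^M|\beta(\xi)|>0$, this gives
\[
|\Im\mathcal{M}_0(\xi)| = |\beta(\xi)|\,\bigl|a_0+b_0\,\alpha(\xi)/\beta(\xi)\bigr|\geq \tfrac{1}{2}|a_0|\,|\beta(\xi)|\longrightarrow\infty,
\]
so $|1-e^{\mp 2\pi i\mathcal{M}_0(\xi)}|$ is bounded below by a positive constant for $|\xi|$ large. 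In particular $Z_\mathcal{M}$ is finite and $L_0$ is globally hypoelliptic by Proposition \ref{lt2}.

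Given $u\in\mathcal{D}'(\mathbb{T}^1\times\mathbb{T}^N)$ with $iLu=f\in C^\infty$, Lemma \ref{lt1} yields $\widehat{u}(\cdot,\xi)\in C^\infty(\mathbb{T}^1)$ for every $\xi$, and for $|\xi|$ large (so $\xi\notin Z_\mathcal{M}$) I would represent $\widehat{u}(t,\xi)$ by formula \eqref{Solu-1} or \eqref{Solu-2}, chosen (depending on the fixed sign of $a$ and the sign of $\beta(\xi)$) so that the dominant summand $a(r)\beta(\xi)$ in $\Im\mathcal{M}(r,\xi)=a(r)\beta(\xi)+b(r)\alpha(\xi)$ has the favorable sign. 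The problem then reduces to the uniform estimate
\[
\Bigl|\int_{I(t,s)}\Im\mathcal{M}(r,\xi)\,dr\Bigr|\leq C\log|\xi|,\qquad t,s\in[0,2\pi],
\]
valid for all $|\xi|$ sufficiently large, where $I(t,s)$ is the integration interval dictated by the chosen formula and the sign is controlled in the favorable direction.

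The heart of the argument is a splitting that exploits the order of vanishing of $a$. Setting $\delta_j(\xi)\doteq|\alpha(\xi)/\beta(\xi)|^{1/m_j}\to 0$, a Taylor expansion at each $t_j$ gives $c_j>0$ and a fixed neighborhood on which $|a(r)|\geq c_j|r-t_j|^{m_j}$. I would split $I(t,s)$ into the small windows $I_j(\xi)\doteq (t_j-\delta_j(\xi),t_j+\delta_j(\xi))$ and their complement. Off $\bigcup_j I_j(\xi)$, one has $|a(r)\beta(\xi)|\geq c_j|\alpha(\xi)|$, so for $|\xi|$ large enough the term $a(r)\beta(\xi)$ absorbs $b(r)\alpha(\xi)$ while retaining the favorable sign; the contribution from this region is therefore non-positive (respectively non-negative). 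Inside each window the integrand is bounded in absolute value by a constant multiple of $|\alpha(\xi)|$ (since $|a(r)\beta(\xi)|\leq C\delta_j(\xi)^{m_j}|\beta(\xi)|=C|\alpha(\xi)|$) and the window has length $2\delta_j(\xi)$, so the contribution is at most $C|\alpha(\xi)/\beta(\xi)|^{1/m_j}|\alpha(\xi)|$, which is $O(\log|\xi|)$ by hypothesis. Summing over the $n$ zeros produces the claimed logarithmic bound.

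With this bound in hand, $|\exp(\mp i\int_{I}\mathcal{M}(r,\xi)\,dr)|\leq|\xi|^C$; combined with the uniform lower bound on $|1-e^{\mp 2\pi i\mathcal{M}_0(\xi)}|$ and the rapid decay of $\widehat{f}(\cdot,\xi)$, the representation formulas give $|\widehat{u}(t,\xi)|\leq C|\xi|^{C'}\sup_r|\widehat{f}(r,\xi)|$, which decays rapidly. Analogous bounds for $\partial_t^k\widehat{u}(\cdot,\xi)$ follow inductively from $\partial_t\widehat{u}=\widehat{f}-i\mathcal{M}\widehat{u}$, so $u\in C^\infty(\mathbb{T}^1\times\mathbb{T}^N)$, proving global hypoellipticity. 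The principal obstacle is the geometric splitting above: one must check that the ``bad'' region, where $|b(r)\alpha(\xi)|$ could overwhelm $|a(r)\beta(\xi)|$, is contained in the narrow windows $I_j(\xi)$ whose widths are precisely calibrated so that the lower-order cost is merely logarithmic; everything else is bookkeeping with the formulas from Lemma \ref{lt1}.
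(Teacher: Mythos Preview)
Your proposal is correct and follows essentially the same route as the paper: choose the representation formula \eqref{Solu-1} or \eqref{Solu-2} according to the sign of $\beta(\xi)$, split the integration domain into shrinking windows of width $\sim|\alpha(\xi)/\beta(\xi)|^{1/m_j}$ around each zero $t_j$ of $a$, observe that $\Im\mathcal{M}(r,\xi)$ has the favorable sign off these windows, and bound each window contribution by $O(|\alpha(\xi)/\beta(\xi)|^{1/m_j}|\alpha(\xi)|)=O(\log|\xi|)$. The one adjustment you should make explicit is to scale the window widths by a constant---the paper takes $C_j'=(\|b\|_\infty/C_j)^{1/m_j}$---so that $|a(r)\beta(\xi)|\geq c_j|\alpha(\xi)|$ actually dominates $|b(r)\alpha(\xi)|\leq\|b\|_\infty|\alpha(\xi)|$ at the window boundary; the phrase ``for $|\xi|$ large enough'' by itself does not buy this, since the ratio at the boundary is independent of $\xi$.
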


\begin{proof}Given $u\in\mathcal{D}'(\mathbb{T}^1\times\mathbb{T}^N)$ such that $iLu=f,$ with $f\in C^{\infty}(\mathbb{T}^1\times\mathbb{T}^N),$ we must show that $u\in C^{\infty}(\mathbb{T}^1\times\mathbb{T}^N).$

Without loss of generality, assume that $a(t)$ is non-negative.

By Lemma \ref{lt1}, the coefficients $\widehat{u}(\cdot,\xi)$ are smooth on $\mathbb{T}^1,$ for all $\xi\in\mathbb{Z}^N.$ Moreover, since $\Im\mathcal{M}_0(\xi)=\beta(\xi)a_0+b_0\alpha(\xi),$ $a_0>0$ and $\alpha(\xi)=o(\beta(\xi)),$ for $|\xi|$ large enough we have $\Im\mathcal{M}_0(\xi)\neq0,$ and then $\mathcal{M}_{0}(\xi)\not\in Z_\mathcal{M}.$

Hence, for $|\xi|$ sufficiently large, we may write
\[
\widehat{u}(t,\xi)= \frac{1}{1-e^{-2\pi i\mathcal{M}_0(\xi)}}\int_{0}^{2\pi}\exp\left(-\int_{t-s}^{t}i\mathcal{M}(r,\xi)dr \right) \widehat{f}(t-s,\xi) ds,
\]
if $\beta(\xi)<0,$ and
\[\widehat{u}(t,\xi)=\frac{1}{e^{2\pi i\mathcal{M}_0(\xi)}-1}\int_{0}^{2\pi}\exp\left(\int_{t}^{t+s}i\mathcal{M}(r,\xi)dr \right) \widehat{f}(t+s,\xi) ds,\]
if $\beta(\xi)>0.$

\bigskip
We must show that the sequence $\widehat{u}(\cdot,\xi)$ decays rapidly. Notice that,
\begin{align*}
|\tau + \mathcal{M}_0(\xi)| & \geqslant |\Im\mathcal{M}_0(\xi)| = |\beta(\xi)| |(a_0+b_0\alpha(\xi)/\beta(\xi))| \\
                            & \geqslant C |\xi|^{-M},
\end{align*}
when $|\xi|\rightarrow\infty$ and it follows by Proposition \ref{lt2}  that
\begin{equation*}
|1-e^{-2\pi i\mathcal{M}_0(\xi)}|^{-1}  \ \textrm{ and } \ |e^{2\pi i\mathcal{M}_0(\xi)}-1|^{-1}
\end{equation*}
have at most polynomial growth.

Now, let $I=\cup_{j=1}^{n}I_j$ be a neighborhood of $a^{-1}(0)$ such that
$$a(t)=(t-t_j)^{m_j}a_j(t), \ t \in I_j,$$
where $a_j(t)\geqslant C_j>0,$ and $m_j$ is an even number, so $a(t)$ does not change sign.

For the indexes $\xi$ such that $\beta(\xi)<0$ and $|\xi|$ is sufficiently large, we have $\beta(\xi)(a(r)+b(r)\alpha(\xi)/\beta(\xi))<0$ on $\mathbb{T}^1\setminus I.$ Moreover, if
\begin{equation*}
\beta(\xi)(a(r)+b(r)\alpha(\xi)/\beta(\xi))=0
\end{equation*}
for a certain $r$ in $I_j,$ then
\begin{equation*}
(r-t_j)^{m_j}a_j(r)=-b(r)\alpha(\xi)/\beta(\xi).
\end{equation*}

In particular,
$$
  |r-t_j|=\left|\frac{b(r)\alpha(\xi)}{a_j(r)\beta(\xi)}\right|^{1/m_j}\leqslant C_j'\left|\frac{\alpha(\xi)}{\beta(\xi)}\right|^{1/m_j},
$$
where $C_j'=(\|b\|_\infty/C_j)^{1/m_j}$.

\smallskip
It follows that $\beta(\xi)(a(r)+b(r)\alpha(\xi)/\beta(\xi))<0$ on
\begin{equation*}
\mathbb{T}^1 \setminus\bigcup_{j=1}^{n}\left[t_j-C_j'\left|\frac{\alpha(\xi)}{\beta(\xi)}\right|^{1/m_j}, \ t_j+C_j'\left|\frac{\alpha(\xi)}{\beta(\xi)}\right|^{1/m_j}\right].
\end{equation*}

Hence, for the indexes $\xi$ such that $\beta(\xi)<0$ and $|\xi|$ is sufficiently large, we obtain
\begin{align*}
& \int_{t-s}^{t}\beta(\xi)(a(r)+b(r)\alpha(\xi)/\beta(\xi))dr\leqslant \\[2mm]
& \qquad \sum_{j=1}^{n}\int_{t_j-C_j'\left|\frac{\alpha(\xi)}{\beta(\xi)}\right|^{1/m_j}}^{t_j+C_j'\left|\frac{\alpha(\xi)}{\beta(\xi)}\right|^{1/m_j}} (r-t_j)^{m_j}a_j(r)\beta(\xi)+b(r) \alpha(\xi)dr.
\end{align*}

Since
\[
\int_{t_j-C_j'\left|\frac{\alpha(\xi)}{\beta(\xi)}\right|^{1/m_j}}^{t_j+C_j'\left|\frac{\alpha(\xi)}{\beta(\xi)}\right|^{1/m_j}} \!\!\! (r-t_j)^{m_j}a_j(r)\beta(\xi)+b(r)\alpha(\xi)dr\leqslant K_j\left|\frac{\alpha(\xi)}{\beta(\xi)}\right|^{1/m_j}|\alpha(\xi)|,
\]
for some positive constant $K_j,$ and  $|\alpha(\xi)/\beta(\xi)|^{1/m_j}|\alpha(\xi)|=O(\log(|\xi|))$ (by hypothesis), it follows that  there exists a positive constant
$\widetilde{M}$ such that
\[\left|\exp\left(\!-\!\!\int_{t-s}^{t}\!\!\! i\mathcal{M}(r,\xi)dr\right)\right|=\exp\left(\int_{t-s}^{t}\!\!\! \beta(\xi)\Big(a(r)+b(r)\frac{\alpha(\xi)}{\beta(\xi)}\Big)dr\right) \leqslant |\xi|^{\widetilde{M}},\]
for all the indexes $\xi$ such that $\beta(\xi)<0$ and $|\xi|$ is sufficiently large.

This same procedure may be used to verify that a similar estimate holds true for the indexes $\xi$ such that $\beta(\xi)>0$ and $|\xi|$ is sufficiently large.

Finally, by using estimates above and \eqref{tscest-homo-alphabeta}, we may verify that the rapid decaying of $\widehat{f}(\cdot,\xi)$ will imply that $\widehat{u}(\cdot,\xi)$ decays rapidly.
Therefore $u\in C^{\infty}(\mathbb{T}^1\times\mathbb{T}^N)$ and $L$ is globally hypoelliptic.

\end{proof}

\begin{remark}
We have a similar result when $\alpha(\xi)$ has super-logarithmic growth,
\begin{equation*}
\liminf_{|\xi|\rightarrow\infty } |\xi|^M |\alpha(\xi)| > 0,
\end{equation*}
$\beta(\xi)=o(\alpha(\xi)),$ and $b(t)$ does not change sign and vanishes only of finite order.
\end{remark}

In the next example we show that the converse of Theorem \ref{conjar} does not hold true.

\bigskip
\noindent {\sc Second example:} Consider $a\in C^\infty(\mathbb{T}^1,\mathbb{R})$ as in the first example in this subsection. We will see that $L=D_t+(a(t)+i)P(D_x),$ where $(t,x)\in\mathbb{T}^2$ and $p(\xi)=\xi+i{\xi}^2,$ is not globally hypoelliptic. Notice that $a(t)$ does not change sign, but
\begin{equation*}
\Im\mathcal{M}(t,\xi)=\xi^2a(t)+\xi
\end{equation*}
changes sign for infinitely many indexes $\xi\in\mathbb{Z}.$

For $\xi>0$ large enough, we have $\xi^2a(t)+\xi<0$ on
\begin{equation*}
[0,\pi-1/\sqrt{\xi})\cup(\pi+1/\sqrt{\xi},2\pi]
\end{equation*}
and $\xi^2a(t)+\xi=-\xi^2(t-\pi)^2+\xi>0$ on $(\pi-1/\sqrt{\xi},\pi+1/\sqrt{\xi}),$ so that \[M_\xi\doteq\max_{t,s\in[0,2\pi]}\int_{t-s}^{t}\!\!\!\Im\mathcal{M}(r,\xi)dr=\int_{\pi-1/\sqrt{\xi}}^{\pi+1/\sqrt{\xi}}\!\!\!-\xi^2(r-\pi)^2+\xi dr=4\sqrt{\xi}/3.\]

Let $\widehat{f}(\cdot,\xi)$ be the $2\pi-$periodic extension of
\[
(1-e^{-2\pi i\mathcal{M}_0(\xi)})\exp\left(i\int_{t}^{\pi+1/\sqrt{\xi}}\Re\mathcal{M}(r,\xi)dr\right) e^{-M_\xi}\phi_\xi(t),\] in which $\phi_\xi\in C^{\infty}_c((\pi-2/\sqrt{\xi},\pi),\mathbb{R})$ is given by $\phi_\xi(t)\doteq \psi(\sqrt{\xi}(t-\pi+1/\sqrt{\xi})),$ with $\psi\in C^\infty_c((-1,1),\mathbb{R}),$ $0\leqslant \psi\leqslant 1,$ and $\psi\equiv1$ in a neighborhood of $[-1/2,1/2].$

Notice that $1-e^{-2\pi i\mathcal{M}_0(\xi)}$ is bounded, since $a_0<0$ implies that
\begin{equation*}
\Im\mathcal{M}_0(\xi)=\xi^2a_0+\xi<0,
\end{equation*}
for $\xi$ large enough.

With these definitions, by using \eqref{tscest-homo-alphabeta} we may see that $\widehat{f}(\cdot,\xi)$ decays rapidly. Hence
\[f(t,x)\doteq\sum_{\xi>-a_0^{-1}}\widehat{f}(t,\xi)e^{ix\xi} \in C^\infty(\mathbb{T}^2).\]

In order to exhibit $u\in\mathcal{D}'(\mathbb{T}^2)\setminus C^{\infty}(\mathbb{T}^2)$ such that $iLu=f,$ consider

\[
\widehat{u}(t,\xi)=\frac{1}{1-e^{-2\pi i\mathcal{M}_0(\xi)}}\int_{0}^{2\pi}\exp\left(-\int_{t-s}^{t}i\mathcal{M}(r,\xi)dr \right) \widehat{f}(t-s,\xi) ds,\]
for $\xi>-a_0^{-1}$.

Note that $1-e^{-2\pi i\mathcal{M}_0(\xi)}\neq0;$ hence $\widehat{u}(\cdot,\xi)$ is well defined and belongs to $C^\infty(\mathbb{T}^1)$.

For $s,t\in[0,2\pi],$ we have
\begin{align*}
 & \left|\frac{1}{1-e^{-2\pi i\mathcal{M}_0(\xi)}}\widehat{f}(t-s,\xi)\exp\left(-\int_{t-s}^{t}i\mathcal{M}(r,\xi)dr\right)\right|\leqslant \\[2mm]
 & \qquad \|\psi\|_{\infty}\exp\left(-\Big(M_\xi-\int_{t-s}^{t}\Im\mathcal{M}(r,\xi)dr\Big)\right)\ \leqslant \ 1.
\end{align*}
Thus $|\widehat{u}(t,\xi)|\leqslant 2\pi,$ which implies that $\widehat{u}(\cdot,\xi)$ increases slowly. It follows that
\[u=\sum_{\xi>-a_0^{-1}}\widehat{u}(t,\xi)e^{ix\xi} \in \mathcal{D}'(\mathbb{T}^2),\]
and Lemma \ref{lt1} implies that $iLu=f.$

\bigskip
Finally,
\begin{eqnarray*}
  |\widehat{u}(\pi+1/\sqrt{\xi},\xi)| & = & \int_{1/\sqrt{\xi}}^{3/\sqrt{\xi}}\phi_\xi(\pi+1/\sqrt{\xi}-s) \\
  & & \times \exp\left(\!\!-\Big(M_\xi-\!\!\int_{\pi+1/\sqrt{\xi}-s}^{\pi+1/\sqrt{\xi}}\!\!\Im\mathcal{M}(r,\xi)dr\Big)\!\!\right)ds.
\end{eqnarray*}

Since $2/\sqrt{\xi}$ is a zero of order at least two of
\[
\theta_\xi(s)\doteq M_\xi-\int_{\pi+1/\sqrt{\xi}-s}^{\pi+1/\sqrt{\xi}}\Im\mathcal{M}(r,\xi)dr\geqslant 0,
\]
it follows that
$$\theta_\xi(s)\leqslant  (s-2/\sqrt{\xi})^2\|\theta_\xi''\|_\infty\leq(s-2/\sqrt{\xi})^2\xi^2(\|a\|_{\infty}+1).$$
Hence
\begin{align*}|\widehat{u}(\pi+1/\sqrt{\xi},\xi)|\geqslant &\int_{1/\sqrt{\xi}}^{3/\sqrt{\xi}}\psi(2-s\sqrt{\xi})e^{-\xi^2(\|a\|_{\infty}+1)(s-2/\sqrt{\xi})^2}ds\\
\geqslant &\int_{3/(2\sqrt{\xi})}^{5/(2\sqrt{\xi})}e^{-\xi^2(\|a\|_{\infty}+1)(s-2/\sqrt{\xi})^2}ds\\
=&\int_{-1/(2\sqrt{\xi})}^{1/(2\sqrt{\xi})}e^{-\xi^2(\|a\|_{\infty}+1)s^2}ds.
\end{align*}

As previously mentioned, the Laplace Method for Integrals implies that
\begin{equation*}
|\widehat{u}(\pi+1/\sqrt{\xi},\xi)|\geqslant K/\xi,
\end{equation*}
where $K$ is a positive constant which does not depend on $\xi.$ In particular, $\widehat{u}(\cdot,\xi)$ does not decay rapidly and $L$ is not globally hypoelliptic.

\section{Homogeneous operators}\label{sectsc}

In the previous section we saw that, (in general) the converse of Theorem \ref{gt1} does not hold, since there exist globally hypoelliptic operators of type \eqref{MO} for which the function $t\in\mathbb{T}^1\rightarrow\Im\mathcal{M}(t,\xi)$ changes sign, for infinitely many indexes $\xi.$

We present here a class of symbols where the converse holds. For instance, if $p(\xi)$ is  homogeneous of order one, then this converse holds, since, in this case, condition $(P)$ of Nirenberg-Treves is necessary for the global hypoellipticity (see \cite{HORV4}, Corollary 26.4.8).

We will see that the converse of Theorem \ref{gt1} holds true in the case in which $p(\xi)$ is  homogeneous of any positive degree.

In the sequel, we present a class of operators composed of a sum of  homogeneous pseudo-differential operators, for which the study of the global hypoellipticity follows from the techniques used in this article.

\begin{theorem}\label{corolph}Assume that the symbol of $P(D_x)$ is  homogeneous of degree $m.$
\begin{enumerate}
  \item[$i)$]  If $m\leqslant 0$ then $L$ is globally hypoelliptic if and only if $L_0$ is globally hypoelliptic;
  \item[$ii)$] If $m>0$  then $L$ is globally hypoelliptic if and only if $L_0$ is globally hypoelliptic, and the function $t\mapsto\Im\mathcal{M}(t,\xi)$ does not change sign, for all $\xi\in\mathbb{Z}^N\setminus\{0\}.$
\end{enumerate}

\end{theorem}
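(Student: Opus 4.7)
Item $i)$ is a direct application of Theorem \ref{gt3}. Homogeneity of degree $m \leqslant 0$ together with the boundedness of $p$ on the unit sphere gives $|p(\xi)| = |\xi|^m |p(\xi/|\xi|)| \leqslant C$ for $|\xi| \geqslant 1$, so both $\alpha(\xi)$ and $\beta(\xi)$ are bounded and, a fortiori, at most logarithmic. Item $i)$ of Theorem \ref{gt3} then delivers the equivalence between the global hypoellipticity of $L$ and that of $L_0$. For the sufficiency in item $ii)$, the hypothesis on $\Im\mathcal{M}$ (no sign change for every $\xi \neq 0$) is strictly stronger than the corresponding hypothesis of Theorem \ref{gt1} (no sign change for $|\xi|$ large), so that theorem applies verbatim.

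The substance of the proof lies in the necessity in item $ii)$. By Theorem \ref{ncm2}, global hypoellipticity of $L$ forces global hypoellipticity of $L_0$, and it remains to show that $t \mapsto \Im\mathcal{M}(t,\xi)$ does not change sign for any $\xi \in \mathbb{Z}^N \setminus \{0\}$. I argue by contrapositive: suppose $\xi_0 \in \mathbb{Z}^N \setminus \{0\}$ is such that $\Im\mathcal{M}(\cdot, \xi_0)$ changes sign. The key observation is that homogeneity gives $\alpha(j\xi_0) = j^m \alpha(\xi_0)$ and $\beta(j\xi_0) = j^m \beta(\xi_0)$ for every $j \in \mathbb{N}$, so both the ratio $\alpha/\beta$ and the sign behaviour of $\Im\mathcal{M}(\cdot, j\xi_0) = j^m \Im\mathcal{M}(\cdot, \xi_0)$ stay constant along the ray through $\xi_0$.

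If $\beta(\xi_0) \neq 0$, set $K = \alpha(\xi_0)/\beta(\xi_0)$, so that $a(t) + b(t)K$ changes sign. Choose a strictly increasing sequence $\{j_n\} \subset \mathbb{N}$ growing fast enough (for instance $j_n = 2^{n+n_0}$ with $n_0$ large) so that $|\beta(j_n\xi_0)| = j_n^m |\beta(\xi_0)| \geqslant n \log(j_n|\xi_0|)$ for every $n \in \mathbb{N}$; this is possible precisely because $m > 0$. Setting $\xi_n = j_n\xi_0$, clause $(\ast)$ holds with limit $K$, so $K \in E_{\alpha,\beta}$, and the same sequence witnesses that $\beta$ has super-logarithmic growth. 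Theorem \ref{ar1} then shows that $L$ is not globally hypoelliptic, the desired contradiction. If instead $\beta(\xi_0) = 0$, then $\alpha(\xi_0) \neq 0$ (otherwise $\Im\mathcal{M}(\cdot, \xi_0) \equiv 0$, contradicting the sign change) and $b(t)$ must change sign. The analogous construction yields $\beta(\xi_n)/\alpha(\xi_n) \equiv 0$, hence $0 \in E_{\beta,\alpha}$, while $\alpha$ has super-logarithmic growth. Since $b(t) + a(t)\cdot 0 = b(t)$ changes sign, the symmetric statement of Theorem \ref{ar1} again yields $L$ not globally hypoelliptic.

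The main technical subtlety, indeed the only one, is the construction of the sequence $\{j_n\}$ so that clause $(\ast)$ is fulfilled for \emph{every} $n \in \mathbb{N}$ and not merely along a tail; doubly exponential growth is amply enough, since $j^m$ eventually dominates $n\log j$ for any $m > 0$. Once this choice is made, the remaining bookkeeping is a direct appeal to Theorem \ref{ar1} and its symmetric counterpart.
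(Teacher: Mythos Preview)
Your proof is correct. The treatment of item $i)$ and the sufficiency in item $ii)$ match the paper's argument exactly (Theorem \ref{gt3}$\,i)$ and Theorem \ref{gt1}, respectively).

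For the necessity in item $ii)$ you take a genuinely different route from the paper. The paper observes that a sign change at $\xi_0$ propagates along the ray $n\xi_0$ and then says to \emph{repeat the construction} from the necessity part of Theorem \ref{gt3}$\,ii)$, i.e.\ to build by hand a smooth $f$ and a singular $u$ with $iLu=f$ along that ray. You instead package the same phenomenon through Theorem \ref{ar1}: the ray $\xi_n=j_n\xi_0$ gives a constant ratio $\alpha(\xi_n)/\beta(\xi_n)=K$ (or $0$ in the symmetric case), and choosing $j_n$ to grow fast enough certifies both super-logarithmic growth and clause $(\ast)$, so Theorem \ref{ar1} delivers non-hypoellipticity directly. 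Your approach is cleaner in that it avoids redoing the Laplace-method estimate, at the cost of invoking a heavier result (Theorem \ref{ar1}); the paper's approach keeps Section \ref{sectsc} logically independent of Section \ref{sectfr}. Both are valid, and your case split on $\beta(\xi_0)\neq 0$ versus $\beta(\xi_0)=0$ is the right way to handle the two halves of Theorem \ref{ar1}.
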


\begin{proof}If $m\leqslant 0,$ the result follows from item $i)$ of Theorem \ref{gt3}. For the case in which $m>0,$ the presented conditions are sufficient thanks to Theorem \ref{gt1}. On the other hand, if there exists $\xi_0\in\mathbb{Z}^N\setminus\{0\}$ such that $t\mapsto\Im\mathcal{M}(t,\xi_0)$ changes sign, then
\begin{equation*}
t\mapsto (n|\xi_0|)^m\Im\mathcal{M}(t,\xi_0/|\xi_0|)
\end{equation*}
changes sign for all $n\in\mathbb{N}.$

Now in order to show that $L$ is not globally hypoelliptic, we may repeat the techniques in the proof of the necessity in item $ii)$ of Theorem \ref{gt3}.

\end{proof}

The following result is a consequence of Theorem \ref{gt0} and Theorem \ref{corolph}.

\begin{corollary}\label{inttheor}
Let $p=p(\xi)$ be a  homogeneous symbol of degree $m=\ell/q,$ with $\ell,q\in\mathbb{N},$ and $\gcd(\ell,q)=1$. Write $p(1)=\alpha+i\beta$ and $p(-1)=\widetilde{\alpha}+i\widetilde{\beta}.$ The operator \[L=D_t+(a+ib)(t)P(D_x),\quad (t,x)\in\mathbb{T}^2,\] is globally hypoelliptic if and only if  the following statements occur:
\begin{itemize}
\item[i)] the functions $t\in\mathbb{T}^1\mapsto a(t)\beta+b(t)\alpha$ and $t\in\mathbb{T}^1\mapsto a(t)\widetilde{\beta}+b(t)\widetilde{\alpha}$ do not change sign.
\item[ii)]   $(a_0\alpha-b_0\beta)^q$ is an irrational non-Liouville number whenever $a_0\beta+b_0\alpha=0$, and $(a_0\widetilde{\alpha}-b_0\widetilde{\beta})^q$ is an irrational non-Liouville number whenever $a_0\widetilde{\beta}+b_0\widetilde{\alpha}=0.$
\end{itemize}
\end{corollary}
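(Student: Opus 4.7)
The plan is to deduce the corollary as a direct composition of Theorem \ref{corolph}(ii), which reduces the variable-coefficient problem to a sign condition together with the global hypoellipticity of $L_0$, and Theorem \ref{gt0}, which characterizes the constant-coefficient homogeneous case in terms of Liouville approximations. Since $m = \ell/q > 0$, Theorem \ref{corolph}(ii) already yields the equivalence: $L$ is globally hypoelliptic if and only if $L_0$ is globally hypoelliptic and $t \mapsto \Im\mathcal{M}(t,\xi)$ does not change sign for any $\xi \in \mathbb{Z} \setminus \{0\}$. It only remains to rephrase each of these two conditions in terms of the data $(\alpha,\beta,\widetilde{\alpha},\widetilde{\beta},a_0,b_0)$.

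For the sign condition, homogeneity of $p$ gives
$$
\Im\mathcal{M}(t,\xi) \, = \, a(t)\Im p(\xi) + b(t)\Re p(\xi) \, = \, \begin{cases} \xi^{m}\bigl(a(t)\beta + b(t)\alpha\bigr), & \xi > 0, \\[1mm] |\xi|^{m}\bigl(a(t)\widetilde{\beta} + b(t)\widetilde{\alpha}\bigr), & \xi < 0. \end{cases}
$$
Because $|\xi|^{m} > 0$, the requirement that this function not change sign for every $\xi \neq 0$ collapses precisely to condition $i)$.

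For the Liouville condition, observe that $L_0 = D_t + (a_0 + ib_0)P(D_x)$ has symbol $(a_0+ib_0)p(\xi)$, which is still homogeneous of degree $\ell/q$. Expanding the product,
$$
(a_0 + ib_0)(\alpha + i\beta) \, = \, (a_0\alpha - b_0\beta) + i(a_0\beta + b_0\alpha),
$$
with the analogous identity at $-1$. Applying Theorem \ref{gt0} to $L_0$ with this new symbol then yields condition $ii)$: whenever the new imaginary part at $1$ (i.e. $a_0\beta + b_0\alpha$) vanishes, the new real part raised to the $q$-th power must be an irrational non-Liouville number, and similarly at $-1$.

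The only real care needed is to verify that the degenerate cases are consistently absorbed. For instance, if $\alpha = \beta = 0$ then $p(\xi) \equiv 0$ for $\xi > 0$, whence $L_0$ fails to be globally hypoelliptic; this is consistent with $(a_0\alpha - b_0\beta)^{q} = 0$ being rational, so condition $ii)$ fails. The symmetric case at $-1$ and the case $a_0 = b_0 = 0$ are dispatched the same way. Once these corner cases are checked, the two theorems chain together to deliver the stated equivalence; I do not anticipate any substantive obstacle beyond this bookkeeping.
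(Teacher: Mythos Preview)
Your proposal is correct and follows exactly the approach indicated in the paper, which presents this corollary as a direct consequence of Theorem~\ref{gt0} and Theorem~\ref{corolph} without giving further details. Your unpacking of the sign condition via homogeneity and the identification of the symbol of $L_0$ at $\pm1$ as $(a_0\alpha-b_0\beta)+i(a_0\beta+b_0\alpha)$ (and its tilde counterpart) is precisely the bookkeeping needed to chain the two theorems together.
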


\subsection{Sum of homogeneous operators}

The techniques used in this article allow us to study the global hypoellipticity of operators of the type
\begin{equation}\label{LND}
L=D_t+\sum_{j=1}^{N}(a_j+ib_j)(t)P_j(D_{x_j}),\,\,\,(t,x)\in\mathbb{T}^1\times\mathbb{T}^N,
\end{equation}
where each $P_j(D_{x_j})$ is  homogeneous  of degree $m_j$ (see Definition \ref{defphs}), so that its symbol $p_j(\xi_j)$ satisfies
\begin{equation*}
p_j(\xi_j)=
\left\{
\begin{array}{ll}
 \ \xi_j^{m_j}p_j(1), & \textrm{ if } \ \xi_j> 0, \\[3mm]
|\xi_j|^{m_j}p_j(-1), & \textrm{ if } \ \xi_j< 0.
\end{array}
\right.
\end{equation*}

The results presented in this subsection generalize Theorem 1.3 of \cite{BDG}, see Corollary \ref{lastcor} below.

The constant coefficient operator $L_0$ associated to the operator $L$ given by \eqref{LND} is
\[L_0=D_t+\sum_{j=1}^{N}(a_{j0}+ib_{j0})P_j(D_{x_j}).\]

We also set, for $j=1,\ldots,N,$
\begin{eqnarray*}
   \mathcal{M}_j(t,\xi_j) \ \doteq \ (a_j+ib_j)(t)p_j(\xi_j), & & \mathcal{M}_{j0}(\xi_j) \ \doteq \ (a_{j0}+ib_{j0})p_j(\xi_j) \\[3mm]
   \mathcal{M}(t,\xi) \ \doteq \ \sum_{j=1}^N\mathcal{M}_j(t,\xi_j), \mbox{ and} & &  \mathcal{M}_{0}(\xi) \ \doteq \ \sum_{j=1}^N\mathcal{M}_{j0}(\xi_j).
\end{eqnarray*}

\begin{theorem}\label{conjlnd}
The operator $L$ given by \eqref{LND} is globally hypoelliptic if the following situations occur:
\begin{itemize}
\item[i)] $L_0$ is globally hypoelliptic.

\item[ii)] for each pair $j,k\in\{1,\ldots,N\}$ ($j\neq k$) such that $m_j>0$ and $m_k>0,$ the sets of real-valued functions
$$\Upsilon_{r,s} \doteq \{\Im\mathcal{M}_j(\cdot,(-1)^r),\Im\mathcal{M}_k(\cdot,(-1)^s)\}, \ r,s\in\{1,2\},$$
are $\mathbb{R}-$linearly dependent.

\item[iii)] for each $\xi_j\in\mathbb{Z}\setminus\{0\},$ the function $t\in\mathbb{T}^1\rightarrow \Im\mathcal{M}_j(t,\xi_j)$ does not change sign whenever $m_j>0,$ $j=1,\ldots,N.$
\end{itemize}

On the other hand, if $L$ is globally hypoelliptic, then conditions $i)$ and $iii)$ hold.
\end{theorem}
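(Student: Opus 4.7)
The plan for sufficiency will be a two-stage reduction followed by an energy estimate on the partial Fourier coefficients. I would begin by splitting $\{1,\dots,N\}=J_+\sqcup J_-$ with $J_+=\{j:m_j>0\}$ and $J_-=\{j:m_j\leqslant 0\}$. For every $j\in J_-$ the symbol $p_j$ is bounded on $\mathbb{Z}$, so $\alpha_j,\beta_j=O(\log(|\xi_j|))$, and the componentwise multipliers
\begin{equation*}
\Psi_{a_j}(u)=\sum_{\xi\in\mathbb{Z}^N}e^{-i(A_j(t)-a_{j0}t)p_j(\xi_j)}\widehat{u}(t,\xi)e^{ix\cdot\xi},\quad \Psi_{b_j}(u)=\sum_{\xi\in\mathbb{Z}^N}e^{(B_j(t)-b_{j0}t)p_j(\xi_j)}\widehat{u}(t,\xi)e^{ix\cdot\xi}
\end{equation*}
should be automorphisms of both $\mathcal{D}'(\mathbb{T}^{N+1})$ and $C^\infty(\mathbb{T}^{N+1})$ by repeating the estimates of Proposition \ref{reduct2} and Lemma \ref{prop-exp-ine}. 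They commute pairwise, and their composition would conjugate $L$ to
\begin{equation*}
\widetilde{L}=D_t+\sum_{j\in J_+}(a_j+ib_j)(t)P_j(D_{x_j})+\sum_{j\in J_-}(a_{j0}+ib_{j0})P_j(D_{x_j}),
\end{equation*}
which has the same symbol average $\mathcal{M}_0$ as $L$; thus $L$ and $\widetilde{L}$ are simultaneously globally hypoelliptic.

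Next I would exploit (ii) and (iii). For $j\in J_+$ and $\sigma\in\{\pm 1\}$ set $h_{j,\sigma}(t)=\Im\mathcal{M}_j(t,\sigma)$. Pairwise $\mathbb{R}$-linear dependence from (ii) forces all nonzero members of $\{h_{j,\sigma}\}$ to be scalar multiples of a single function $h(t)$, and (iii) forces $h$ not to change sign. Writing $h_{j,\sigma}=\mu_{j,\sigma}h$ and using homogeneity of each $p_j$, one reaches the decomposition
\begin{equation*}
\Im\widetilde{\mathcal{M}}(t,\xi)=h(t)C(\xi)+D(\xi),
\end{equation*}
with $C(\xi)=\sum_{j\in J_+,\,\xi_j\neq 0}|\xi_j|^{m_j}\mu_{j,\mathrm{sgn}(\xi_j)}$ and $D(\xi)=\sum_{j\in J_-}(a_{j0}\beta_j(\xi_j)+b_{j0}\alpha_j(\xi_j))$ uniformly bounded in $\xi$. (The case $|J_+|=1$ makes (ii) vacuous, but $h_{j,\mathrm{sgn}(\xi_j)}$ still has a fixed sign for each $\xi$, which suffices.) For $u\in\mathcal{D}'$ with $i\widetilde{L}u=f\in C^\infty$, Remark \ref{remarkcc} and (i) make $Z_{\mathcal{M}}$ finite, and Lemma \ref{lt1} yields a unique smooth $\widehat{u}(\cdot,\xi)$ for $|\xi|$ large. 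Selecting formula \eqref{Solu-1} when $h(t)C(\xi)\leqslant 0$ and formula \eqref{Solu-2} otherwise, the $hC$ part of the exponent is non-positive by construction and the $D$ part is bounded by $2\pi\|D\|_\infty$; combined with the polynomial lower bound on $|1-e^{\pm 2\pi i\mathcal{M}_0(\xi)}|$ from Proposition \ref{lt2}, this would give $|\widehat{u}(t,\xi)|\leqslant C|\xi|^M\|\widehat{f}(\cdot,\xi)\|_\infty$, and analogous bounds on $\partial_t^k\widehat{u}$ read off from the ODE produce rapid decay of $\widehat{u}$, whence $u\in C^\infty$.

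The necessity of (i) I would deduce by directly adapting the proof of Theorem \ref{ncm2}, replacing $e^{ix\xi_n}$ by $e^{ix\cdot\xi_n}$ throughout. For the necessity of (iii), suppose $\Im\mathcal{M}_j(\cdot,\xi_j^0)$ changes sign for some $j\in J_+$ and $\xi_j^0\neq 0$. Homogeneity gives $\Im\mathcal{M}_j(\cdot,n\xi_j^0)=n^{m_j}\Im\mathcal{M}_j(\cdot,\xi_j^0)$, preserving the sign-change pattern with amplitude $n^{m_j}\to\infty$; taking $\xi_n=n\xi_j^0\mathbf{e}_j$, the remaining summands in $\Im\mathcal{M}(t,\xi_n)$ contribute $O(1)$, so the sign change persists for $n$ large. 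Applying the construction from the necessity portion of Theorem \ref{gt3}(ii), now confined to the variable $x_j$, would produce the desired $u\in\mathcal{D}'\setminus C^\infty$ with $iLu\in C^\infty$.

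The hard part will be the second reduction above: extracting the common factor $h$ rigorously from the pairwise linearly dependent family $\{h_{j,\sigma}\}$, carefully handling the degenerate cases where several members vanish, and certifying that the bounded perturbation $D(\xi)$ is harmless inside the exponential even though it may locally spoil the sign of $\Im\widetilde{\mathcal{M}}(t,\xi)$ near zeros of $h$, so that Theorem \ref{gt1} cannot be invoked directly.
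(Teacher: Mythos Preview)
Your proposal is correct and follows essentially the same approach as the paper's sketch: use condition~(ii) to write the positive-degree part of $\Im\mathcal{M}(t,\xi)$ as a single sign-definite function of $t$ times a scalar depending on $\xi$, choose between \eqref{Solu-1} and \eqref{Solu-2} according to that sign, and treat the non-positive-degree contribution as a uniformly bounded perturbation inside the exponential. The only cosmetic differences are that you first conjugate away the variable coefficients for $j\in J_-$ via $\Psi_{a_j},\Psi_{b_j}$ (the paper instead carries those bounded terms through the estimates directly), and for the necessity of (iii) the paper invokes Theorem~\ref{corolph} for the single-variable operator $L_j$ and tensors the singular solution with $1_{x'}$, which is exactly your $\xi_n=n\xi_j^0\mathbf{e}_j$ construction phrased more abstractly.
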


We notice that $L$ may be non-globally hypoelliptic if conditions $i)$ and $iii)$ hold, but condition $ii)$ fails. For instance, consider the operator
\begin{equation*}
D_t+i\cos^2(t)D_{x_1}+i\sqrt{2}\sin^2(t)D_{x_2}, \  (t,x_1,x_2)\in\mathbb{T}^3.
\end{equation*}
This operator satisfies $i)$ and $iii),$ but $ii)$ fails, since $\cos^2(t)$ and $\sqrt{2}\sin^2(t)$ are $\mathbb{R}-$linearly independent functions. Theorem 1.3 of \cite{BDGK} implies that this operator is not globally hypoelliptic.

Before presenting the proof of Theorem \ref{conjlnd}, we give an example which shows that condition $ii),$ in general, is not necessary for the global hypoellipticity of $L.$

\begin{example}
Consider \[L=D_t+i\cos^2(t)D^2_{x_1}+i\sin^{2}(t)D^2_{x_2},\,\,\,(t,x_1,x_2)\in\mathbb{T}^3.\] Note that $\Im\mathcal{M}_1(t,1)=\cos^2(t)$ and $\Im\mathcal{M}_2(t,1)=\sin^2(t)$ are $\mathbb{R}-$linearly independent functions. Moreover, condition $iii)$ is satisfied and we have
\begin{equation*}
|\tau+i\Im\mathcal{M}_{10}(\xi_1)+i\Im\mathcal{M}_{20}(\xi_2)|=|\tau+i(\xi_1^2+\xi_2^2)/2|\geqslant1/2,
\end{equation*}
for all $(\xi_1,\xi_2)\in\mathbb{Z}^2\setminus\{(0,0)\}.$ Hence, condition $i)$ is also satisfied.

By using partial Fourier series in the variables $(x_1,x_2)$ and proceeding as in the proof of Theorem \ref{gt1}, we see that $L$ is globally hypoelliptic.
\end{example}

\begin{proof}[Sketch of the proof of Theorem \ref{conjlnd}]

\bigskip

\noindent{\textit{Sufficient conditions:}}

\medskip

Given a distribution $u\in\mathcal{D}'(\mathbb{T}_t^1\times\mathbb{T}^N_x)$ such that $iLu=f,$ with $f\in C^{\infty}(\mathbb{T}^1\times\mathbb{T}^N),$ we must show that $u\in C^{\infty}(\mathbb{T}^1\times\mathbb{T}^N).$

By using partial Fourier series in the variable $x=(x_1,\ldots,x_N),$ we are led to the equations \[\partial_t\widehat{u}(t,\xi)+i\widehat{u}(t,\xi)\sum_{j=1}^{N}\mathcal{M}_j(t,\xi_j)=\widehat{f}(t,\xi),\,\,\,t\in\mathbb{T}^1,\,\,\,\xi=(\xi_1,\ldots,\xi_N)\in\mathbb{T}^N.\]

Since $L_0$ is globally hypoelliptic, proceeding as in the proof of Proposition \ref{tt1} we see that \[Z_{\mathcal{M}}=\left\{\xi\in\mathbb{Z}^N;\,\sum_{j=1}^N\mathcal{M}_{j0}(\xi_j)\in\mathbb{Z}\right\}\] is finite. Hence, Lemma \ref{lt1} implies that for all but a finite number of indexes $\xi,$ $\widehat{u}(t,\xi)$ is written as either
\begin{align}
\widehat{u}(t, \xi) = \frac{1}{1 - e^{-  2 \pi i{\mathcal{M}_0}(\xi)}} \int_{0}^{2\pi}\exp\left(-i\int_{t-s}^{t}{\mathcal{M}}(r, \, \xi) \, dr\right) \widehat{f}(t-s, \xi)ds, \label{Solu-1f}
\end{align}
or
\begin{align}
\widehat{u}(t, \xi) = \frac{1}{e^{ 2 \pi i{\mathcal{M}_0}(\xi)}-1} \int_{0}^{2\pi}\exp\left(i\int_{t}^{t+s}{\mathcal{M}}(r, \, \xi) \, dr \right) \widehat{f}(t+s, \xi)ds, \label{Solu-2f}
\end{align} where now \[\mathcal{M}(t,\xi)=\sum_{j=1}^N\mathcal{M}_j(t,\xi_j).\]

Assume that $m_{j}>0,$ for $j=1,\ldots,r,$ and $m_{j}\leqslant0,$ for $j=r+1,\ldots,N.$

From formulas \eqref{Solu-1f} and \eqref{Solu-2f} we see that, in order to show that $\widehat{u}(\cdot,\xi)$ decays rapidly, it is enough to control the imaginary part of the functions \[t\in\mathbb{T}^1\rightarrow\sum_{j=1}^{r}\mathcal{M}_j(t,\xi_j).\] Recall that the global hypoellipticity of $L_0$ implies that the sequences $(1 - e^{\pm 2 \pi i{\mathcal{M}_0}(\xi)} )^{-1}$ increases slowly (Proposition \ref{lt2}).

For the indexes $\xi\in\mathbb{Z}^N$ such that $\xi_1=\cdots=\xi_r=0,$ we have \[\sum_{j=1}^{r}\Im\mathcal{M}_j(t,\xi_j)=\sum_{j=1}^{r}\Im\mathcal{M}_j(t,0),\] which does not depend on $\xi.$

Suppose now that $\xi\in\mathbb{Z}^N$ is such that $\xi_{1}\neq0.$ Since
\begin{equation*}
\Im\mathcal{M}_j(t,\xi_j)=|\xi_j|^{m_j}\Im\mathcal{M}_j(t,\pm1), \ \xi_j\neq0,
\end{equation*}
under assumption $ii)$ it follows that
\[\sum_{j=1}^{r}\Im\mathcal{M}_j(t,\xi_j)=\Im\mathcal{M}_{1}(t,\pm1)\sum_{j=1}^{r}\lambda_{j}^{\pm}|\xi_j|^{m_j}+\sum_{j=1}^{r}\gamma_j\Im\mathcal{M}_j(t,0),\] where $\lambda_j^{\pm}$ and $\gamma_j$ are real numbers, $j=1,\ldots,r.$ Moreover, $\gamma_j=0$ when $\xi_j\neq0,$ and $\lambda^{\pm}_j=0$ when $\xi_j=0.$

An analogous formula holds if at least one $\xi_j$ is non-zero, for $j=1,\ldots,r.$

We note that we have a finite number of such formulas which we may use to represent \[\sum_{j=1}^{r}\Im\mathcal{M}_j(t,\xi_j),\] for all indexes $\xi$ such that at least one $\xi_j\neq0,$ $j=1,\ldots,r.$

By using these formulas and condition $iii)$ we may see that the rapid decaying of $\widehat{f}(\cdot,\xi)$ implies that $\widehat{u}(\cdot,\xi)$ decays rapidly (similar to which was done in the proof of item $ii)$ of Theorem \ref{gt3}).

Therefore, conditions $i)-iii)$ imply that $L$ is globally hypoelliptic.

\bigskip

\noindent{\textit{Necessary conditions:}}

Proceeding as in the proof of Theorem \ref{ncm2}, where now
\[\mathcal{M}(t,\xi)=\sum_{j=1}^{N}\mathcal{M}_j(t,\xi_j),\]
we see that condition $i)$ is necessary.

The necessity of condition $iii)$ follows from Theorem \ref{corolph}. Indeed, if
\[L_j\doteq D_t+(a_j+ib_j)(t)P_j(D_{x_j}),\,\,(t,x_j)\in\mathbb{T}^2,\]
is not globally hypoelliptic, there exists $\nu\in\mathcal{D}'(\mathbb{T}^2_{(t,x_j)})\setminus C^{\infty}(\mathbb{T}^2)$ such that $L_j\nu\in C^{\infty}(\mathbb{T}^2).$

Setting $x'=(x_1,\ldots,x_{j-1},x_{j+1},\ldots,x_N),$ it follows that
\begin{equation*}
\mu\doteq\nu\otimes 1_{x'} \in \mathcal{D}'(\mathbb{T}^2\times\mathbb{T}^{N-1})\setminus C^{\infty}(\mathbb{T}^2\times\mathbb{T}^{N-1})
\end{equation*}
and $\mu$ satisfies $L\mu=L_j\nu\in C^{\infty}(\mathbb{T}^1\times\mathbb{T}^N).$ Hence, $L$ is not globally hypoelliptic if condition $iii)$ fails.

\hfill $\square$
\end{proof}

It follows from Theorem 1.3 of \cite{BDG} that condition $ii)$ of Theorem \ref{conjlnd} is necessary if $P_j(D_{x_j})=D_{x_j},$ $j=1,\ldots,N.$ The next result gives a larger class of operators for which this necessity still holds true.

\begin{theorem}
Assume that the operator $L$ defined in \eqref{LND} is globally hypoelliptic.  Then, for all $j,k \in \{1,\ldots,N\}$ such that
$j \neq k, \ m_{j}, m_{k} \in \mathbb{Z}_+^*$ and  $\gcd(m_{j},m_{k})=1$, the sets
$$\Upsilon_{r,s} = \{\Im\mathcal{M}_j(\cdot,(-1)^r),\Im\mathcal{M}_k(\cdot,(-1)^s)\}, \ r,s\in\{1,2\},$$
are $\mathbb{R}-$linearly dependent.
\end{theorem}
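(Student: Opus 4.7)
The plan is to prove the contrapositive: if some $\Upsilon_{r_0,s_0}$ is $\mathbb{R}$-linearly independent, then $L$ is not globally hypoelliptic. Since $m_j,m_k$ are positive integers, a smooth Euclidean extension of a homogeneous symbol of integer degree forces $p_j(-1)=(-1)^{m_j}p_j(1)$ (and similarly for $k$), so $\Im\mathcal{M}_j(\cdot,-1)=(-1)^{m_j}\Im\mathcal{M}_j(\cdot,1)$. Therefore the four pairs $\Upsilon_{r,s}$ coincide up to sign with $\{f,g\}:=\{\Im\mathcal{M}_j(\cdot,1),\Im\mathcal{M}_k(\cdot,1)\}$, which is linearly independent. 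Lemma~3.1 of \cite{BDGK} (quoted in Example~\ref{exabncs2}) then produces nonzero integers $p,q$ such that $pf+qg$ changes sign on $\mathbb{T}^1$. Since $\gcd(m_j,m_k)=1$ forces at least one of $m_j,m_k$ to be odd, I can absorb any minus signs appearing in $p$ or $q$ into a choice of sign for $\xi_j^n$ or $\xi_k^n$ below, reducing to the case $p,q>0$ with a specific $(r,s)\in\{1,2\}^2$.

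Next comes a Diophantine construction imitating the proof of Theorem~\ref{gt0}. Using $\gcd(m_j,m_k)=1$, I find non-negative integers $\alpha_1,\alpha_2,\beta_1,\beta_2$ with $\alpha_1 m_j-\alpha_2 m_k=1$ and $\beta_2 m_k-\beta_1 m_j=1$, and I set
\[
\xi_j^n=(-1)^r n^{m_k}p^{\alpha_1}q^{\beta_1},\qquad \xi_k^n=(-1)^s n^{m_j}p^{\alpha_2}q^{\beta_2},
\]
with $\xi^n\in\mathbb{Z}^N$ having these components in positions $j,k$ and zero elsewhere. A direct check gives $q|\xi_j^n|^{m_j}=p|\xi_k^n|^{m_k}$, so
\[
\Im\mathcal{M}(t,\xi^n)=\frac{|\xi_j^n|^{m_j}}{p}\bigl(pF(t)+qG(t)\bigr)+C(t),
\]
where $F=\Im\mathcal{M}_j(\cdot,(-1)^r)$, $G=\Im\mathcal{M}_k(\cdot,(-1)^s)$, the combination $pF+qG$ still changes sign, and $C(t)$ is a smooth $n$-independent function ($C\equiv 0$ when all remaining $P_\ell$ have positive degree). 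By Theorem~\ref{ncm2} the operator $L_0$ is globally hypoelliptic, so Proposition~\ref{tt1} gives $|Z_\mathcal{M}|<\infty$; after passing to a subsequence I may assume $\xi^n\notin Z_\mathcal{M}$ for every $n$.

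From here the argument mirrors the proof of Theorem~\ref{ar1}. Let $h:=pF+qG$ and $M:=\max_{t,s\in[0,2\pi]}\int_{t-s}^t h(r)\,dr$, strictly positive because $h$ changes sign; pick $(t_0,s_0)$ attaining this maximum with $\sigma_0:=t_0-s_0\in(0,2\pi)$, and a bump $\phi\in C_c^\infty$ concentrated near $\sigma_0$. I define
\[
\widehat{f}(t,\xi^n)=\bigl(1-e^{-2\pi i\mathcal{M}_0(\xi^n)}\bigr)\phi(t)\exp\!\Big(i\!\int_t^{t_0}\!\Re\mathcal{M}(r,\xi^n)\,dr\Big)\exp\!\Big(-\tfrac{|\xi_j^n|^{m_j}}{p}M\Big).
\]
Because $|p_j(\xi_j^n)|,|p_k(\xi_k^n)|$ grow only polynomially in $|\xi^n|$, the super-polynomial decay of the last exponential makes $\widehat{f}(\cdot,\xi^n)$ decay rapidly, so $f\in C^\infty$. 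The natural candidate $\widehat{u}(\cdot,\xi^n)$ defined via formula~\eqref{Solu-1f} is bounded in $n$ (hence $u\in\mathcal{D}'$), while Laplace's method at $(t_0,s_0)$ yields $|\widehat{u}(t_n,\xi^n)|\gtrsim(|\xi_j^n|^{m_j})^{-1/2}$, which cannot decay rapidly in $|\xi^n|$. Thus $u\notin C^\infty$ and $L$ fails to be globally hypoelliptic.

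The two delicate steps I expect to require care are: (i) the sign reduction in the first paragraph --- organising the four $\Upsilon_{r,s}$ via the parity of $m_j,m_k$ (which the hypothesis $\gcd(m_j,m_k)=1$ guarantees is nontrivial) so as to land on positive $p,q$; and (ii) controlling the normalization $(1-e^{-2\pi i\mathcal{M}_0(\xi^n)})^{-1}$ and the oscillatory real-part factor against the decaying weight $\exp(-|\xi_j^n|^{m_j}M/p)$, for which the polynomial lower bound on $|1-e^{-2\pi i\mathcal{M}_0(\xi^n)}|$ coming from global hypoellipticity of $L_0$ is essential.
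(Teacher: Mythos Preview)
Your overall strategy---apply Lemma~3.1 of \cite{BDGK} to the independent pair, build an integer sequence $\xi^n$ via B\'ezout so that $\Im\mathcal{M}(t,\xi^n)$ is a large multiple of a fixed sign-changing function, and then run the singular-solution construction from Theorem~\ref{ar1}/Theorem~\ref{gt3}~$ii)$---is exactly the paper's approach. The Diophantine step you propose (solving $\alpha_1 m_j-\alpha_2 m_k=1$, $\beta_2 m_k-\beta_1 m_j=1$ and setting $\xi_j^n,\xi_k^n$ accordingly) is equivalent in effect to the paper's trick of multiplying $pF+qG$ by the monomial $p^{(m_1-1)m_2+(m_2-1)\ell_2 m_2}q^{\ell_1 m_1}$ so as to rewrite the coefficients as $\tilde p^{\,m_1}$ and $\tilde q^{\,m_2}$.

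There is, however, a genuine error in your first paragraph. Under Definition~\ref{defphs}, a homogeneous toroidal symbol of degree $m$ satisfies $p(\xi)=|\xi|^m\,\widetilde p(\xi/|\xi|)$, which in dimension one only says $p(\xi)=|\xi|^m p(\pm 1)$ with $p(1)$ and $p(-1)$ \emph{independent} values of the extension $\widetilde p$; nothing forces $p_j(-1)=(-1)^{m_j}p_j(1)$. (This is precisely why Theorem~\ref{gt0} and Corollary~\ref{inttheor} carry separate hypotheses on $p(1)=\alpha+i\beta$ and $p(-1)=\widetilde\alpha+i\widetilde\beta$.) Your claim would be correct for genuine differential operators, whose symbols are polynomials, but fails for the pseudo-differential operators considered here. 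Consequently the reduction of all four $\Upsilon_{r,s}$ to a single pair, and the subsequent absorption of the signs of $p,q$ into the sign of $\xi_j^n$ or $\xi_k^n$, both collapse: flipping the sign of $\xi_j^n$ replaces $\Im\mathcal{M}_j(\cdot,1)$ by $\Im\mathcal{M}_j(\cdot,-1)$, which is in general an \emph{unrelated} function, not $\pm\Im\mathcal{M}_j(\cdot,1)$.

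The fix is to do what the paper does: treat each $(r_0,s_0)$ on its own and simply say the other cases are analogous. Apply Lemma~3.1 of \cite{BDGK} directly to the pair $F=\Im\mathcal{M}_j(\cdot,(-1)^{r_0})$, $G=\Im\mathcal{M}_k(\cdot,(-1)^{s_0})$ to obtain nonzero integers $p,q$ with $pF+qG$ changing sign \emph{and having non-zero mean}; the non-zero mean lets you assume (after replacing $L$ by $-L$ and $t$ by $-t$ if necessary) that $\Im\mathcal{M}_0(\xi^n)<0$, so that $(1-e^{-2\pi i\mathcal{M}_0(\xi^n)})$ stays bounded---this is how the paper handles your point~(ii), rather than via the polynomial lower bound from Proposition~\ref{lt2}. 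Then run the paper's multiplicative B\'ezout step (or your additive one) to land on integer frequencies with the correct signs $(-1)^{r_0},(-1)^{s_0}$; the residual sign ambiguities in $\tilde p,\tilde q$ are exactly what the paper sweeps under ``the other possibilities are analogous,'' and you should do the same rather than rely on the false parity identity.
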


\begin{proof} Let $m_1$ and $m_2$ be positive integers such that $gcd(m_1,m_2)=1$ and assume that $\Im\mathcal{M}_1(\cdot,1)$ and $\Im\mathcal{M}_2(\cdot,1)$ are $\mathbb{R}-$linearly independent functions in
 $C^{\infty}(\mathbb{T}^1,\mathbb{R})$ (the other possibilities are analogous).

By Lemma 3.1 of \cite{BDGK} there exist non-zero integers $p\neq q$ such that
\begin{equation*}
t\mapsto \Im\mathcal{M}_1(t,1) p + \Im\mathcal{M}_2(t,1) q
\end{equation*}
changes sign and has non-zero mean.

Inspired by \eqref{divinmult}, we multiply this function by
\begin{equation*}
p^{(m_1-1)m_2+(m_2-1)\ell_2m_2}q^{\ell_1m_1},
\end{equation*}
where $\ell_1$ and $\ell_2$ are non-negative integers such that $\ell_2m_2-\ell_1m_1=1.$ Hence, the function
\[t\mapsto [p^{\ell_1(m_2-1)+m_2}q^{\ell_1}]^{m_1}\Im\mathcal{M}_1(t,1)+[p^{m_1-1+(m_2-1)\ell_2}q^{\ell_2}]^{m_2}\Im\mathcal{M}_2(t,1),\]
changes sign.

Setting $\tilde{p}=p^{\ell_1(m_2-1)+m_2}q^{\ell_1}$ and $\tilde{q}=p^{m_1-1+(m_2-1)\ell_2}q^{\ell_2},$ it follows that $\tilde{p}$ and $\tilde{q}$ are integers and
\begin{align*}n^{m_1m_2}[\tilde{p}^{m_1}\Im\mathcal{M}_1(t,1)+\tilde{q}^{m_2}\Im\mathcal{M}_2(t,1)] =
\Im\mathcal{M}_1(t,\tilde{p}n^{m_2})+\Im\mathcal{M}_2(t,\tilde{q}n^{m_1})]&.
\end{align*}

Notice that, changing the variable $t$ by $-t$ and considering $-L$ (if necessary), we may assume that $\Im\mathcal{M}_{10}(\tilde{p})+\Im\mathcal{M}_{20}(\tilde{q})<0.$

We then proceed as in the proof of necessity in item $ii)$ of Theorem \ref{gt3} in order to show that
\begin{equation*}
L_{12}\doteq D_t+(a_1+ib_1)P_1(D_{x_1})+(a_2+ib_2)(t)P_2(D_{x_2}), \ (t,x_1,x_2)\in\mathbb{T}^3,
\end{equation*}
is not globally hypoelliptic. As before, this implies that $L$ is not globally hypoelliptic.

To be more precise, the technique to show that $L_{12}$ is not globally hypoelliptic consists of using the change of sign of  $n^{m_1m_2}[\Im\mathcal{M}_1(t,\tilde{p})+\Im\mathcal{M}_2(t,\tilde{q})]$ to construct a smooth function \[\hat{f}(t,x_1,x_2)=\sum_{n=1}^{\infty}\widehat{f}(t,\tilde{p}n^{m_2},\tilde{q}n^{m_1})e^{i(\tilde{p}n^{m_2}x_1+\tilde{q}n^{m_1}x_2)}\] such that $iLu=f$ has a solution in $\mathcal{D}'(\mathbb{T}^3)\setminus C^{\infty}(\mathbb{T}^3).$ The Fourier coefficients $\widehat{f}(\cdot,\tilde{p}n^{m_2},\tilde{q}n^{m_1})$ are the $2\pi-$periodic extension of
\[
\Theta_{1,2}(n) \, \phi(t)\exp\left(i\int_{t}^{t_0}\Re{M}_1(r,\tilde{p}n^{m_2})+\Re{M}_2(r,\tilde{q}n^{m_1})dr\right)
e^{-n^{m_1m_2}M},\]
where
\[ \Theta_{1,2}(n) \doteq 1-e^{-2\pi i[\mathcal{M}_{10}(\tilde{p}n^{m_2})+\mathcal{M}_{20}(\tilde{q}n^{m_1})]}\]
and
\[M=\max_{0\leqslant t,s\leqslant2\pi}\int_{t-s}^{t}\Im\mathcal{M}_1(r,\tilde{p})+\Im\mathcal{M}_2(r,\tilde{q})dr,\]
which is supposed to be assumed in $t=t_0$ and $s=s_0,$ and  $\phi$ is a smooth cutoff function identically one in a small neighborhood of $t_0-s_0.$

\end{proof}

\begin{corollary}\label{lastcor}Suppose that each symbol $p_j(\xi_j)$ is real-valued and homogeneous, whose degree is a positive integer $m_j.$ Assume also that
\begin{equation*}
\gcd(m_j,m_k)=1, \ \textrm{ for } \ j\neq k, \ j,\, k\in\{1,\ldots,N\}.
\end{equation*}

Under these assumptions, $L$ given by \eqref{LND} is globally hypoelliptic if and only if the following occurs:
\begin{itemize}
\item[i)] $L_0$ is globally hypoelliptic.

\item[ii)] $\dim{\rm span}\{b_j\in C^{\infty}(\mathbb{T}^1,\mathbb{R});\,\,j=1,\ldots,N\}\leqslant 1$

\item[iii)] $b_j(t)$ does not change sign, for $j=1,\ldots,N.$

\end{itemize}
\end{corollary}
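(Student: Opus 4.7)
The plan is to derive Corollary \ref{lastcor} from Theorem \ref{conjlnd} together with the immediately preceding necessity theorem by translating each hypothesis of the corollary into the corresponding condition of Theorem \ref{conjlnd}, exploiting that each $p_j$ is real-valued and homogeneous. I would first record the identity
\[
\Im \mathcal{M}_j(t, \xi_j) = b_j(t)\, p_j(\xi_j) = |\xi_j|^{m_j}\, b_j(t)\, p_j(\mathrm{sgn}(\xi_j)), \quad \xi_j \in \mathbb{Z}_{\ast},
\]
which follows from reality and homogeneity of $p_j$ and, after discarding any $j$ with $p_j \equiv 0$ (whose contribution to $L$ is null), identifies condition (iii) of the corollary with condition (iii) of Theorem \ref{conjlnd}.

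For sufficiency, assuming (i)--(iii) of the corollary hold, I would observe that (i) is exactly condition (i) of Theorem \ref{conjlnd}, that (iii) becomes condition (iii) of Theorem \ref{conjlnd} via the identity above, and that (ii), namely $\dim \operatorname{span}\{b_1, \ldots, b_N\} \leqslant 1$, implies that every pair $\{b_j, b_k\}$ is $\mathbb{R}$-linearly dependent. Multiplication by the real scalars $p_j((-1)^r)$ and $p_k((-1)^s)$ preserves this linear dependence, so the sets $\Upsilon_{r,s}$ are $\mathbb{R}$-linearly dependent, giving condition (ii) of Theorem \ref{conjlnd}. Applying Theorem \ref{conjlnd} then closes this direction.

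For necessity, I would suppose $L$ is globally hypoelliptic. Theorem \ref{conjlnd} immediately yields condition (i) of the corollary, and its necessity of condition (iii), combined with the identity above, gives (iii). For (ii), the hypothesis $\gcd(m_j, m_k) = 1$ for every $j \neq k$ is precisely what permits applying the theorem stated immediately before the corollary, giving $\mathbb{R}$-linear dependence of each set $\Upsilon_{r,s}$. For each index $j$ with $p_j \not\equiv 0$ I would fix $\epsilon_j \in \{\pm 1\}$ with $p_j(\epsilon_j) \neq 0$, so that the linear dependence of $\{b_j\, p_j(\epsilon_j), b_k\, p_k(\epsilon_k)\}$ forces that of $\{b_j, b_k\}$. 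A routine linear-algebra argument---pairwise linear dependence in a finite family of vectors forces all non-zero members to be scalar multiples of a single common vector---then yields $\dim \operatorname{span}\{b_1, \ldots, b_N\} \leqslant 1$.

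The main obstacle is essentially clerical: handling the degenerate cases where $p_j \equiv 0$ (the term drops out of $L$ and imposes no constraint on $b_j$) or where only one of $p_j(\pm 1)$ vanishes (the corresponding $\Upsilon_{r,s}$ is trivially dependent, but the opposite choice of sign still conveys the dependence of $\{b_j, b_k\}$). Beyond this bookkeeping, no new analysis outside that of Theorem \ref{conjlnd} and the preceding theorem is required.
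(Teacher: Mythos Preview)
Your proposal is correct and follows exactly the approach the paper intends: the corollary is stated without proof immediately after Theorem \ref{conjlnd} and the unnamed necessity theorem, and is meant to be read off from those two results via the identity $\Im\mathcal{M}_j(t,\xi_j)=b_j(t)p_j(\xi_j)$ valid for real-valued $p_j$. Your translation of conditions (i)--(iii) in both directions, and the elementary linear-algebra step reducing pairwise dependence of the $\Upsilon_{r,s}$ to $\dim\operatorname{span}\{b_j\}\leqslant 1$, are precisely what is required.
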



\end{document}